\let\cite=\citet
\begin{document}

\begin{frontmatter}
  \title{A nodal based high order nonlinear stabilization for finite element approximation of Magnetohydrodynamics}
  \tnotetext[t1]{This research is funded by Swedish Research Council (VR) under grant number 2021-04620.}
  \author[1]{Tuan Anh Dao}
  \ead{tuananh.dao@it.uu.se}
  \author[1]{Murtazo Nazarov\corref{cor1}}
  \ead{murtazo.nazarov@it.uu.se}
  \address[1]{Division of Scientific Computing, Department of Information Technology, Uppsala University, Sweden}
  \cortext[cor1]{Corresponding author}

  \begin{abstract}
    We present a novel high-order nodal artificial viscosity approach designed for solving Magnetohydrodynamics (MHD) equations. Unlike conventional methods, our approach eliminates the need for ad hoc parameters. The viscosity is mesh-dependent, yet explicit definition of the mesh size is unnecessary. Our method employs a \red{multimesh} strategy: the viscosity coefficient is constructed from a linear polynomial space constructed on the fine mesh, corresponding to the nodal values of the finite element approximation space. The residual of MHD is utilized to introduce high-order viscosity in a localized fashion near shocks and discontinuities. This approach is designed to precisely capture and resolve shocks. Then, high-order Runge-Kutta methods are employed to discretize the temporal domain. Through a comprehensive set of challenging test problems, we validate the robustness and high-order accuracy of our proposed approach for solving MHD equations.
  \end{abstract}

  \begin{keyword}
    MHD, stabilized finite element method, 
    artificial viscosity, residual based shock-capturing,
    high order method	
  \end{keyword}

\end{frontmatter}

\section{Introduction}
In this article, we are interested in solving the following MHD system for \red{the conserved variables} $\bsfU:=(\rho, \bbm, E, \bB):  \Omega\times[0,\widehat t \,\,] \mapsto \polR^{2d+2}$, such that 
\begin{equation}\label{eq:mhd1}
  \p_t \bsfU + \DIV \bsfF_a(\bsfU) = 
  0,
\end{equation}
where the flux term is defined as 
\begin{equation*}
  \bsfF_a:=
  \big(
  \bbm,	
  \bbm \otimes \bu + p\polI - \bbetaa,
  \bu (E+p) -\bu \SCAL \bbetaa,
  \bB \otimes \bu - \bu \otimes \bB
  \big)^\top.
\end{equation*}
Here $\Omega\subset\mR^d$ is an open bounded domain, $d$ is the space dimension and $\widehat t>0$ is the final time, and $\bu:=\bbm / \rho$ is the velocity field. The term $\bbetaa$ is the Maxwell stress tensor,
\[
  \bbetaa := \big( -\frac12(\bB\SCAL\bB)\polI + \bB\otimes\bB \big).
\]
The thermodynamic pressure $p$ is computed from the equation of state 
\[
  p := (\gamma-1) \l(E - \frac12 \rho (\bu \SCAL \bu) - \frac12 (\bB \SCAL \bB)\r), 
\]
where $\gamma>0$ is the adiabatic gas constant.

The MHD system, designed to simulate fusion processes, has attracted significant attention from researchers over the last decade. This increased attention is explained to the recent successful developments in Tokamak reactor technology. State-of-the-art numerical methods for solving MHD rely on approximate Riemann solvers within Godunov schemes. Notable references for finite difference, finite volume and discontinuous Galerkin (DG) schemes include works such as \citep{Brio_Wu_1988, Powell_et_al_1991, Dai_Woodward_1998, Balsara_et_al_1999, Bouchut_et_al_2007, Balsara_2010, Balsara_Dubser_Abgrall_2014, Warburton_Karniadakis_1999, Li_Shu_2005, Dumbser_2016}.  

While the Riemann-solver approach has demonstrated high effectiveness in simulating various challenging MHD problems, it is worth acknowledging the existing difficulty in computing Riemann solutions. For instance, \cite{Torrilhon_2002} demonstrated the existence of non-unique solutions for specific Riemann problems within the MHD system due to its non-strictly hyperbolic nature and non-convex flux. Therefore, the Riemann-solver based methods become less effective in such scenarios. An alternative to the Riemann-solver-based approach is the utilization of central schemes, as demonstrated in works of, \eg \citep{Tadmor_mhd1_2004, Li_Xu_Yakovlev_2011, Cheng_et_al_2013}. Central schemes are obtained by approximating the flux term by simple quadrature formulas applied to the cells. This approach eliminates the necessity for exact or approximate Riemann solvers and was used in the finite volume and DG frameworks. 
An alternative method to avoid relying on a Riemann solver is the residual distribution approach outlined in \citep{Abgrall_2009}. This technique involves integrating the MHD system over each element $K$ of the mesh to obtain the residual. Subsequently, the residual is distributed to each node on element $K$. As nodes may be shared by multiple elements, the final step entails collecting all contributing residuals to the node and solving the discrete system.

The success of finite volume and DG methods has not been extended to finite element approximations, and one reason for this limitation is the challenge of extending Riemann solvers to finite elements. Another contributing factor could be the absence of stable, robust, and highly accurate stabilization techniques tailored for solving compressible Magnetohydrodynamics (MHD) equations. Given that the finite element approximation of the flux term is akin to central schemes, regularization of the scheme becomes necessary. A noteworthy observation in the literature review is the scarcity of references dedicated to the approximation of compressible MHD equations using the finite element method. The earliest works we found were by \citep{Shadid_2010, Shadid_et_al_2016} and \citep{Nkonga_hal_2016}. In these works, the Variational Multiscale Method based on Galerkin-Least-Square stabilization (GLS) was employed to simulate resistive MHD equations. It is worth noting that GLS-based methods are typically implicit in time, necessitating robust nonlinear iteration solvers for each time step, resulting in computational expenses. Moreover, making least-squares methods higher-order in both time and space poses challenges due to the complex nature of the stabilization terms and the implementation of time-dependent test spaces. Furthermore, while the least-square stabilizations prove highly efficient in mitigating minor spurious oscillations within smooth regions, they fall short in suppressing the Gibbs phenomenon induced by shocks and discontinuities. Addressing this limitation necessitates the incorporation of supplementary shock-capturing terms. 

Over the last decade, there has been active research focused on stabilizing finite element schemes using only nonlinear shock-capturing terms. For instance, the method discussed in \citep{Guermond_2008, Guermond_pasquetti_popov_JCP_2011, GuermondNaPo11, Nazarov_2013, Nazarov_Hoffman_2013, Nazarov_Larcher_2017} constructs shock-capturing or artificial viscosity coefficients that are proportionate to the residual of the partial differential equation (PDE) or the corresponding entropy inequality of the system. This approach has been thoroughly tested across various systems of conservation laws, including the compressible Euler equations. What makes this stabilization method particularly interesting is its potential to design schemes that achieve very high orders in both time and space.

The recent works by \citep{Dao2022a, Dao2022b} advance the application of the residual-based artificial viscosity method to solve the compressible MHD system. In these studies, the ideal MHD system regularized by introducing an artificial elliptic term of the form $-\DIV (\epsilon \GRAD \bsfU_h)$ to the system \eqref{eq:mhd1}. Here, the artificial viscosity coefficient takes the form $\epsilon \sim C h^2 |R(\bsfU_h)|$, where $R(\bsfU_h)$ denotes the MHD residual, and $\bsfU_h$ is an approximation of $\bsfU$.

While the authors demonstrate successful fourth-order convergence rates using explicit Runge-Kutta methods in time and up to third-order polynomials in space, challenges persist in the field of residual- or entropy-based stabilization. The main drawbacks of the outlined artificial viscosity construction include:
\begin{enumerate}
\item[{\em (i)}] Uncertain tunable constant: The size of the tunable constant $C$ remains unknown a priori, making it challenging to determine an appropriate value.
\item[{\em (ii)}] Unclear definition of mesh size: In the context of finite element approximations, unstructured meshes are commonly employed. However, defining the mesh size parameter $h$ in these meshes becomes ambiguous.
\end{enumerate}
Addressing these challenges will significantly enhance the robustness and applicability of the proposed method.

Recent papers by \citep{Basting_2017, Kuzmin_2020, Mabuza_2020, Dao2023} offer some solutions to the limitations mentioned, focusing on the ideal MHD equations through an edge-based artificial viscosity method. In the weak form, the viscous term in the finite element approximation takes the form $(\epsilon \GRAD \bsfU_h, \GRAD \bvarphi_i)$, where $\bvarphi_i$ (with $i \in \polN$) is the basis function of the finite element space. Introducing the matrix $\e_{ij} := (\epsilon \GRAD \bvarphi_j, \GRAD \bvarphi_i)$ for $i, j \in \polN$, the edge viscosity is expressed as
$
(\epsilon \GRAD\bsfU_h , \GRAD \bvarphi_i ) =
\sum_{j\in \polN} \e_{ij} (\bsfU_j - \bsfU_i),
$
where $\bsfU_j$ and $\bsfU_i$ represent the nodal values of $\bsfU_h$. In the cited references, the authors use continuous linear element in space that resulted to a second order scheme.

However, it is important to note that the edge-based viscosity requires assembling the viscosity matrix $\e_{ij}$ at each time step. In addition, for higher-order polynomial spaces, this matrix becomes denser, introducing an unnessesary level of diffusion in the approximation. As a consequence, the required CFL number for stability decreases rapidly as the polynomial space increases, as discussed in \citep{Guermond2024} and related works.

The objective of this paper is to develop a high-order artificial viscosity method without the need for tunable coefficients and explicit mesh-size definition. The methodology builds upon the work of \citep{Guermond_Nazarov_2014}, where the concept of reorienting meshes for artificial viscosity construction was initially introduced. In this paper, we introduce a nodal-based viscosity distinct from that presented in \citep{Guermond_Nazarov_2014}. Our approach involves utilizing two meshes to construct viscosity coefficients for higher polynomial spaces, thereby adding sufficient viscosity without compromising the hyperbolic CFL condition. Importantly, the proposed method can be easily integrated into existing nodal-based algorithms.

The work is organized as follows. In Section~\ref{sec:prelim}, we provide an overview of the finite element spaces and the necessary tools for our analysis and approximation. Section~\ref{sec:nv} introduces the finite element approximation of MHD and presents the nodal artificial viscosity method. Then, Section~\ref{sec:num} tests the proposed method on well-known MHD benchmarks. In Section~\ref{Sec:scalar}, we revisit scalar conservation laws and establish the discrete maximum principle for the nodal artificial viscosity introduced in this work. 

\section{Preliminaries}\label{sec:prelim}
In this section, we present the triangulation of the domain of interest, the construction of finite element spaces, and some definitions that will be needed in our further analysis.

\subsection{Finite element approximations}
Let us denote by $\calT_h$ a subdivision of $\Omega$ into finite number of disjoint elements $K$ such that $\overline\Omega=\cup_{k\in \calT_h} \overline K$, where $\overline \Omega$ and $\overline K$ denotes the closure of $\Omega$ and $K$, respectively. We consider a family of shape-regular and conforming meshes $\{\calT_h\}_{h>0}$, where $h$ denotes the smallest diameter of all triangles of $\calT_h$. We denote by $\bg_K: \widehat K \mapsto K$ the affine mapping that maps the reference element $\widehat K$ to $K$. 

We define finite element spaces that we use. For each mesh $\calT_h$ we associate the following continuous approximation space:
\begin{equation}\label{eq:Xh}
  \calX_h := \{v_h: v_h \in \calC^0(\overline \Omega); \, \forall K\in \calT_h, \,
  v_h|_K \circ \bg_K \in \polP_k \},
\end{equation}
where $\polP_k$ is the set of multivariate polynomials of total degree at most $k\ge1$ defined over $\widehat K$. We also denote by $I$ the total Lagrange nodes in the mesh $\calT_h$, and $\{ \bN_1, \ldots, \bN_I \}$ is the collection of all the Lagrange nodes and $\{\varphi_1, \cdots, \varphi_I\}$ is the set of corresponding scalar-valued shape functions, and $S_i$ is the support of $\varphi_i$, and $S_{ij} = S_i \bigcap S_j$ is the intersection of the supports of $\varphi_i$ and $\varphi_j$. We often refer to the index set of the all degrees of freedoms by $\calV:=\{1,2,\ldots, I\}$. With $\Nel(S_i)$ we denote the number of elements in $S_i$. We denote by $\calI(S_i)$ and $\calI(K)$ the set of all indices of the shape functions living at $S_i$ and cell $K$, respectively.

We denote by $m_{ij} := \int_\Omega \varphi_j(\bx) \varphi_i(\bx) \ud \bx$ the consistent mass matrix, and by $m_i := \sum_{j\in \calV} m_{ij} = \int_\Omega \varphi_i(\bx) \ud \bx$ the lumped mass matrix. Note that, we used here the partition of unity property of the test functions, \ie $\sum_{j\in\calV} \varphi_j = 1$. 

Let us denote by $\widetilde K$ a reference equilateral triangle (or tetrahedron in 3D) whose edges equal to 1. For the given physical element $K$, we denote by $\Phi_K: \widetilde K \mapsto K$ the affine mapping to transform $\widetilde K$ to $K$, and the Jacobian matrix of this transformation by $\polJ_K$. Then by the chain rule, we get:
\[
  \GRAD(q \circ \Phi_K) = \polJ_K^\top(\GRAD q) (\Phi_K),
\]
where $\polJ_K^\top$ is the transpose of $\polJ_K$. 

Next, we denote by $\calT_h^{\text{fine}}$ the mesh whose vertices coincide with the nodes of the finite element space $\calX_h$. For example, $\calT_h^{\text{fine}} = \calT_h$, when the polynomial space is $k=1$, \ie $\polP_1$. Examples of patches $\calT_h^{\text{fine}}$ and $\calT_h$ for the case of $\polP_2$ and $\polP_3$ are shown in Figure~\ref{fig:meshes}.

In addition, we need a continuous piecewise linear finite element space on the fine mesh $\calTh^{\text{fine}}$ that will be useful in constructing the first-order viscosity used in this paper:
\begin{equation}\label{eq:Xhfine}
  \calX^{\polP_1, \text{fine}}_h := \{v_h: v_h \in \calC^0(\overline \Omega); \, \forall K\in \calT^{\text{fine}}_h, \,
  v_h|_K \circ \bg_K \in \polP_1 \},
\end{equation}
and let us denote the shape function of $\calX^{\polP_1, \text{fine}}_h$ by $\varphi_i^{\polP_1, \text{fine}}$. Therefore, the lumped mass matrix corresponding to this fine space is 
$	
m^{\polP_1, \textrm{fine}}_i := \int_{S^{\polP_1, \textrm{fine}}_i}\varphi_i^{\polP_1, \textrm{fine}} \ud \bx,
$
where $S_i^{\polP_1, \textrm{fine}} = \text{supp }(\varphi_i^{\polP_1, \text{fine}})$.

\subsection{Eigenvalues of the nonlinear system}\label{sec:eigenvalues:mhd}
For the construction of the first-order viscosity, we need to determine the largest eigenvalue of the MHD system to approximate the maximum local wave speed. Let $\be \in \mR^d$ be a direction. The eight eigenvalues corresponding to the elementary waves of the ideal MHD equations are given by
\[
  \lambda_{1,8} \coloneqq \bu\SCAL\be \mp c_{f}, \quad
  \lambda_{2,7} \coloneqq \bu\SCAL\be \mp b, \quad 
  \lambda_{3,6} \coloneqq \bu\SCAL\be \mp c_{s}, \quad
  \lambda_{4,5} \coloneqq \bu\SCAL\be,
\]
where
\[
  c_{f,s}^2 \coloneqq \frac12
  \Big(
  a^2 + \frac{\bB \SCAL \bB}{\rho}
  \Big) \pm 
  \frac12 
  \Bigg(
  \Big(
  a^2 + \frac{\bB \SCAL \bB}{\rho}
  \Big)^2 
  - 4a^2 b^2
  \Bigg)^\frac12,
\]
\[
  a:=\gamma p/\rho,\quad\quad b := \bB\SCAL\be/\rho^\frac12.
\]
The eigenvalue of the biggest magnitude is $\max_{i=1,\ldots,8}|\lambda_i|$ $=$ $\max$($|\lambda_1|$,$|\lambda_8|$). We will use this value to approximate the maximum local wave speed at a given point in space.

\subsection{Projection method \citep{Brackbill_Barnes_1980} to clean the divergence error}\label{sec:div_cleaning}
We apply a post-process to the magnetic field after each explicit solve in time. The obtained magnetic solution $\bB'$ by solving the linear system is corrected as
\[
  \bB=\bB'-\nabla \Psi_p,
\]
where $\Psi_p$ is obtained by solving the Poisson equation $\DIV(\GRAD\Psi_p) - \DIV\bB'=0$. Once $\Psi_p$ is obtained, the finite element representation of $\GRAD\Psi_p$ is determined by projection. Due to numerical error from this projection, the resulting magnetic field does not satisfy $\DIV\bB = 0$ exactly in both strong and weak senses. However, it is enough to keep the divergence error small so that the numerical runs remain stable. After each correction of the magnetic field, the dependent variables: pressure $p$, temperature $T$, energy $e$ and the entropy functions are updated accordingly to ensure consistency of the discrete solution. 

\section{Nonlinear viscosity method for MHD}
\label{sec:nv}
We discretize the regularized ideal MHD system \eqref{eq:mhd1} using continuous finite element approximations. First, we define the following vector-valued function spaces:
\[
  \bcalX_h := [\calX_h]^{d}, \quad \bcalW_h := \calX_h \CROSS \bcalX_h \CROSS \calX_h \CROSS \bcalX_h, 
\]
where $\calX_h$ is defined in \eqref{eq:Xh}. Then, we formulate the finite element approximation of the MHD system \eqref{eq:mhd1} as follows: 
find $\bsfU_h(t) \in \calC^1([0,\widehat t \,\,]; \bcalW_h)$ such that
\[
  \begin{aligned}
    \Big(\p_t \bsfU_h \,,\, \bsfV_h \Big) 
    + & \Big( \DIV \bsfF_a(\bsfU_h) \,,\, \bsfV_h \Big)
        = 0,
        \quad \forall\,\, \bsfV_h \in \bcalW_h,
  \end{aligned}
\] 
where $\bsfU_h:=(\rho_h, \bbm_h, E_h, \bB_h)^\top$ of which components are finite element discretizations of the conserved variables $\rho, \bbm, E$, and $\bB$, respectively. Here, the inner product is computed as
\[
\Big(\bv \,,\, \bw\Big) := \sum_{K \subset \calT_h} \Big(\bv \,,\, \bw\Big)_K :=  \sum_{K \subset \calT_h} \int_K \bv \SCAL \bw \ud \bx.
\]

It is well known that finite element approximation of first order hyperbolic problems including the MHD system is unstable, see \eg \citep[Chapter~5]{Ern_Guermond_2004}. We aim to construct a stabilization technique to make the method stable. We regularize the MHD system by adding vanishing viscosity elliptic terms. 
The viscosity solution $\bsfU^{\e}_h(t) \in \calC^1([0,\widehat t \,\,]; \bcalW_h)$ is obtained by solving the following problem: find $\bsfU^{\e}_h(t)$ such that
\begin{equation}\label{eq:ODE}
  \begin{aligned}
    \Big(\p_t \bsfU^{\e}_h \,,\, \bsfV_h \Big)
    & + \Big( \DIV \bsfF_a(\bsfU^{\e}_h) \,,\, \bsfV_h \Big)
      + b(\bsfU^{\e}_h \,,\, \bsfV_h) = 0, 
      \quad \forall\,\, \bsfV_h \in \bcalW_h,
  \end{aligned}
\end{equation}
where the viscous bilinear form is defined as follows
\begin{equation}\label{eq:b}
  b(\bsfU^{\e}_h \,,\, \bsfV_h) := \sum_{K\subset \calT_h} 
  \int_K  \e_h \polJ_K \polJ_K^\top \, \GRAD \bsfU^{\e}_h \SCAL \GRAD \bsfV_h \ud \bx, 
  \quad \forall \bsfU_h, \bsfV_h \in \bcalW_h.
\end{equation}
Here \blue{$\e_h \equiv\e_h(\bsfU^{\e}_h ) \in \calC^1([0, \widehat t \,]; \calX_h)$} is a nodal vanishing viscosity function to be defined in the following sections. The product $\polJ_K \polJ_K^\top$ comes from reorienting the physical triangle to the equilateral triangle, which is convenient to prove positivity of the scheme, see \eg \citep[Remark 3.1]{Guermond_Nazarov_2014}. It is interesting to notice that the same artificial viscosity is used for all the components. Apart from the simplicity of having only one viscosity coefficient $\e_h$, several advantages of using it have been shown in \citep{Dao2022a}. The velocity field and pressure are computed at the nodal points, \eg $\bu_h(\bN_i,t) = \bbm_h(\bN_i,t)/\rho_h(\bN_i,t)$, $\forall i\in \calV$ and $t \in [0, \widehat t \,\,]$. 

One of the main difficulties in solving \eqref{eq:mhd1} is to preserve the solenoidal nature of the magnetic field, $\DIV \bB = 0$. However, it is not in the focus of this paper. To keep the divergence error of the discrete solution small, we use the simple projection method described in Section~\ref{sec:div_cleaning}.

\subsection{Construction of the stabilization term}
We want to construct $\e_h$ in \eqref{eq:b} to be minimum of first order viscosity that adds sufficient stabilization close to the shock areas without deteriorating the time-step restriction for the explicit schemes, and high order viscosity that vanishes in the smooth region. 

\blue{
Since the viscosity function will be computed at every time level, let us start by discretizing the temporal domain. Split $[0, \widehat t \,\,]$ into $N$ intervals of variable length. Let $t^n$ be the current time for $n=0,\ldots,N$ and the next time $t^{n+1}$ is computed using the time step $\tau_n$, \ie $t^{n+1} = t^n + \tau_n$. Let us denote by $\bsfU^n_h := \sum_{j\in \calV} \bsfU^n_j \varphi_j(\bx)$ the finite element approximation of the solution $\bsfU(\bx, t^n)$ at time $t^n>0$ with the nodal values $\bsfU_j^n$, and $\e_h^n := \sum_{j\in \calV} \e^n_j \varphi_j(\bx)$ the viscosity function defined at time $t^n$, where its nodal values $\e_j^n$ are computed in this section.
}

\subsubsection{First order viscosity}
We construct the first order viscosity on the local patches $S^{\polP_1, \textrm{fine}}_i$ for every node $N_i, i\in\calV$. We start by defining the local patch indicator 
\begin{equation}\label{eq:Phi}
  \Phi^{\polP_1, \textrm{fine}}_i := \max_{ i\not=j \in \calI(S^{\polP_1, \textrm{fine}}_i)}|\GRAD \varphi^{\polP_1, \textrm{fine}}_j|,
\end{equation}
\blue{
and local maximum wave speed for every $t^n$:
\begin{equation}\label{eq:vloc_max}
  \begin{aligned}
    \lambda_{\max,i}(\bsfU_h^n) :=&\, \max_{ i\not=j \in \calI(S^{\polP_1, \textrm{fine}}_i)}\Big(\lambda_1(\bsfU_h^n),\, \lambda_8(\bsfU_h^n) \Big)  \\
    = & 
        \max_{ i\not=j \in \calI(S^{\polP_1, \textrm{fine}}_i)}
        \Big(
        |\bu_h(\bN_j,t^n))\SCAL\be - c_{f}(\bN_j,t^n))| \,,\, |\bu_h(\bN_j,t^n))\SCAL\be + c_{f}(\bN_j,t^n))|
        \Big),
  \end{aligned}
\end{equation}
}

\begin{definition}[Nodal artificial viscosity]\label{def:nodal}
  Let $\Nel(S_i)$ be the number of elements in $S_i$. Then, for every node $\bN_i$, $i\in\calV$, the nodal artificial viscosity is 
  \begin{equation}\label{eq:eps}
  \blue{
    \e^{\text{L}}_{i}(\bsfU_h^{\,n}) := C_i m^{\polP_1, \textrm{fine}}_i \lambda_{\max, i}(\bsfU_h^{\,n}) \, \Phi^{\polP_1, \textrm{fine}}_i,
    }
  \end{equation}
where 
  \red{
  \begin{equation}\label{eq:Ci}
    C_i := \frac{d+1}{2}
    \frac{1}{\Nel(S_i)}
      \max_{K\in S_i}
      |K|
      ^{-1}.
  \end{equation}
  }

\end{definition}

When $\bcalW_h$ is the space of continuous piecewise linear polynomials, \ie $\polP_1$, the meshes $\calT_h$ and $\calT_h^{\polP_1, \textrm{fine}}$ are equal, and the lumped mass matrix $m^{\polP_1, \textrm{fine}}_i$ has support on the patch depicted in Figure~\ref{fig:meshes}(a). For higher-order polynomial spaces, the fine mesh $\calT_h^{\polP_1, \textrm{fine}}$ coincides with the nodal points of the space $\bcalW_h$, and therefore the support of $m^{\polP_1, \textrm{fine}}_i$ contains the closest nodes to the node $\bN_i$. The corresponding supports of the first order viscosity in \eqref{eq:eps} for the node $\bN_i$ are depicted in the dashed triangles in Figure~\ref{fig:meshes}(b) and Figure~\ref{fig:meshes}(c) for $\polP_2$ and $\polP_3$ spaces.

\begin{figure}[h!]
  \centering
  \begin{subfigure}{0.3\textwidth}
    \centering
    \includegraphics[width=\textwidth]{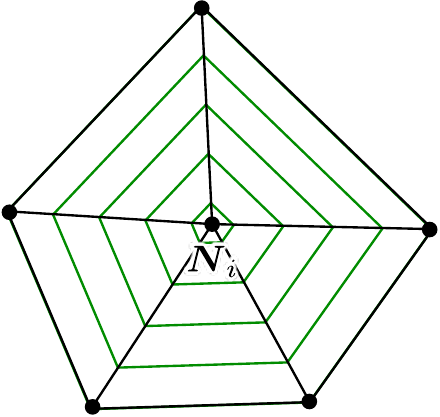}
    \caption*{$\polP_1$}
  \end{subfigure}
  \hfill
  \begin{subfigure}{0.3\textwidth}
    \centering
    \includegraphics[width=\textwidth]{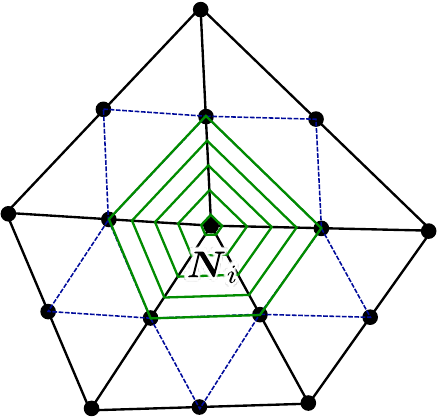}
    \caption*{$\polP_2$}
  \end{subfigure}
  \hfill
  \begin{subfigure}{0.3\textwidth}
    \centering
    \includegraphics[width=\textwidth]{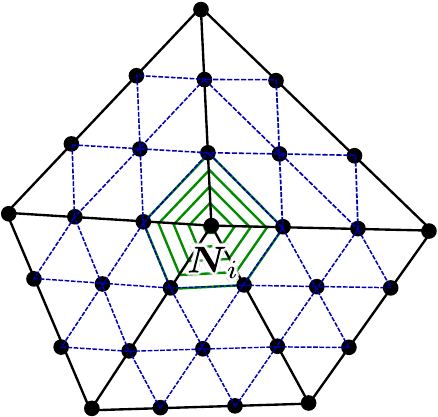}
    \caption*{$\polP_3$}
  \end{subfigure}
  \caption{Nodal distribution and sub-meshes for different polynomial spaces. The green dashed area is the support of $\bN_i$ in the corresponding $\polP_1$ sub-mesh.}
  \label{fig:meshes}
\end{figure}

It is important to keep in mind that, although we calculate the artificial viscosity on the nodes of the $\polP_1$ fine meshes, we compute the bilinear viscosity operator \eqref{eq:b} in the corresponding high-order polynomial basis functions. This step is crucial for maintaining conservation. 

\begin{remark}[Relation to upwind schemes]
\red{ For linear finite element spaces in 1D and uniform meshes, we have $C_i = \frac{1}{2h}$, $m_i = h$, $\Phi_i = \frac{1}{h}$ and $\polJ_K = h$. 
  Then, the viscosity coefficient is
  $
  \e^{\text{L}}_{i}(\bsfU_h^{\,n}) \polJ_K \polJ_K^\top = \frac{1}{2}  h \lambda_{\max, i}(\bsfU_h^{\,n}),
  $
  for every nodal point $N_i$, which, in turn, coincides with the Lax-Friedrichs viscous flux coefficient. For uniform meshes and linear finite elements in 2D and 3D, 
we have again $\Phi_i = \frac{1}{h}$, where $h$ is the mesh size of the element $K$. Then, by noting that $\int_K \varphi_i \ud \bx = \frac{1}{d+1} |K|$ which implies $m_i = \Nel(S_i)\frac{1}{d+1}|K|$, and the Jacobian in the mapping $\polJ_K$ scales like $h$, we get
  \[
  \e^{\text{L}}_{i}(\bsfU_h^{\,n}) = \frac{d+1}{2} \frac{1}{\Nel(S_i)} \frac{1}{|K|}
  \, \Nel(S_i)\frac{1}{d+1}|K| \lambda_{\max, i}(\bsfU_h^{\,n}) \frac{1}{h}
  = \frac12\lambda_{\max, i}(\bsfU_h^{\,n}) \frac{1}{h}.
  \]
  And again, 
    $
  \e^{\text{L}}_{i}(\bsfU_h^{\,n}) \polJ_K \polJ_K^\top \sim \frac{1}{2}  h \lambda_{\max, i}(\bsfU_h^{\,n}),
  $
  which has a correct unit of the Lax-Friedrichs viscous flux coefficient. Note, that in 2D and 3D the artificial viscosity is tensor-valued. 

}
\end{remark}

\blue{

}

\subsubsection{Residual viscosity}
\label{sec:rv}
\blue{
For given solution $\bsfU_h^n$ at time $t^n$ we compute the finite element residual of the MHD system by solving the following projection problem: Find $\bsfR(\bsfU_h^n) \in \bcalW_h$ such that  
\[
  \Big(\bsfR(\bsfU^n_h)\, ,\, \bsfV_h \Big) + \sum_{K \subset \calT_h} \Big(\frac{|K|^{\frac{2}{d}}}{p}\, \GRAD \bsfR(\bsfU^n_h) \,,\, \GRAD \bsfV_h \Big)_K  := \Big(\big| D_{\tau} \bsfU^n_h + \DIV \bsfF_a(\bsfU^n_h) \big| \, ,\, \bsfV_h \Big), \quad \forall \bsfV_h \in \bcalW_h,
\]
where $D_{\tau} \bsfU^n_h$ is an approximation of the time derivative of the solution $\bsfU_h$ at time $t^n$, which can be obtained using the backward differentiation formula (BDF). In practice, any approximation above $\calO(\tau^{p-1})$ does not deteriorate the convergence rate of the Galerkin method. In all computational tests in this paper, we use the second-order BDF method to approximate the time derivative of the solution. The residual is usually highly oscillatory. The second term in the above projection is an elliptical smoothing, which smooths out small fluctuations in the residual without suppressing its jump at discontinuities.
 
Let us denote the components of the residual as 
$
\bsfR(\bsfU_h)_\rho,
\bsfR(\bsfU_h)_{\bbm},
\bsfR(\bsfU_h)_E,
\bsfR(\bsfU_h)_{\bB}
$
. We construct the residual-based artificial viscosity term $\e^\textrm{RV}_{h}(\bsfU_h^n) \in \calX_h$ with the nodal points $\bN_i$, $i\in\calV$, at time $t^n$ as 
\begin{equation}\label{eq:eps_rv}
  \label{eq:rv:mhd}
  \begin{aligned}
    \e^\textrm{RV}_{h,i}(\bsfU_h^n) = C_i 
    \min
    \Big(
    \lambda_{\max,i}(\bsfU_h^n)\, \Phi^{\polP_1, \textrm{fine}}_i \,,\, 
    \max_{\mathsf{x} = \{\rho, \red{E}, \bbm, \bB\}}  \frac{|\bsfR (\bsfU_{h,i}^n)_{\mathsf{x}}|}{\Psi_i(\mathsf{x}^n_h)}
    \Big) \, m^{\polP_1, \textrm{fine}}_i, 
  \end{aligned}
\end{equation}
where $\Phi^{\polP_1, \textrm{fine}}_i$ is defined in \eqref{eq:Phi} and $C_i$ is the artificial viscosity constant defined in \eqref{eq:Ci}, the maximum wave speed is defined in \eqref{eq:vloc_max}, and $\Psi_i(\mathsf{x})$ is a piecewise constant normalization function calculated as follows:
\begin{equation}\label{eq:psi_i}
	\Psi_i(\mathsf{x}) := 
	\frac14
	\|\mathsf{x} - \overline{\mathsf{x}}\, \|_{L^{\infty}(\Omega)}
	 \Bigg(
	 1 - 
	\frac{
	\max_{j\in\calI(S_i)}\mathsf{x}_j(t) - 
	\min_{j\in\calI(S_i)}\mathsf{x}_j(t) 
	}
	{
	\max_{j\in\calV}\mathsf{x}_j(t) - 
	\min_{j\in\calV}\mathsf{x}_j(t) 
	}
	\Bigg) + \epsilon \|\mathsf{x}\|_{L^{\infty}(\Omega)} ,
\end{equation}
where 
$
\overline{\mathsf{x}} := \frac{1}{|\Omega|} \int_{\Omega} \mathsf{x} \ud \bx
$
and $\epsilon = 10^{-8}$ is a small safety factor that is used to avoid division by zero, and 
\[
\theta_i:=\frac{
	\max_{j\in\calI(S_i)}\mathsf{x}_j(t) - 
	\min_{j\in\calI(S_i)}\mathsf{x}_j(t) 
	}
	{
	\max_{j\in\calV}\mathsf{x}_j(t) - 
	\min_{j\in\calV}\mathsf{x}_j(t) 
	}
\]
is a smoothness indicator, $\theta_i\in[0,1]$, that reduces the normalization in the region of shocks and discontinuities, which leads to an increase in the resulting scaled residual.  

\begin{remark}[Scaling of the residual]
Let us consider a linear advection equation in 1D with the constant speed $\beta>0$: $\p_t \rho + \beta \p_x \rho=0$. Then, the residual of an approximate solution $\rho_h^n$ at time $t^n$ and uniform grid with the mesh-size $h$ at the nodal point $x_i$ is:
\[
R_i(\rho_h^n):= (D_\tau \rho_h^n)_i + \beta \frac{\rho^n_{i+1} - \rho^n_{i-1}}{2h}.
\]
In addition, $\lambda_{\max,i}\equiv \beta$ and $\Phi_i\equiv \frac{1}{h}$. In addition, to ease the discussion let us take $\epsilon=0$ and assume that the time-derivative of $\rho_h^n$ is zero. Then,
\[
\begin{aligned}
\frac{|R_i(\rho_h^n)|}{\Psi_i(\rho_h^n)}
= &\
\frac{1}{
\frac{1}{4}\|\rho_h^n - \overline{\rho_h^n}\|_{L^{\infty}(\Omega)}(1-\theta_i)}
 \frac{\beta}{2h} | \rho^n_{i+1} - \rho^n_{i-1} | \\
= &\
\frac{4}{(1-\theta_i) \max_{i\in \calV}|\rho_i^n - \overline{\rho_h^n}|} 
\frac{\beta}{2h} \Big| (\rho^n_{i+1}-\overline{\rho_h^n}) - (\rho^n_{i-1}-\overline{\rho_h^n}) \Big| \\
\le &\
\frac{4}{(1-\theta_i) \max_{i\in \calV}|\rho_h^n - \overline{\rho_h^n}|} 
\frac{\beta}{2h} \, 2\,  \max_{i\in \calV}| |\rho_i^n - \overline{\rho_h^n}| \\
= &\
\frac{4}{1-\theta_i}\, \frac{\beta}{h} \\
= &\
\frac{4}{1-\theta_i}\, \lambda_{\max,i} \Phi_i,
\end{aligned}
\]
which shows that 1. the scaling of the residual is the same as $ \lambda_{\max,i} \Phi_i$; 2. when solution has discontinuity around $x_i$, the smoothness indicator  $\theta_i$ tends to 1 and scaled residual goes to infinity; 3. when solution is smooth, $\theta_i$ approaches to 0 and the scaled residual is bounded from above with $4 \lambda_{\max,i} \Phi_i$. When the solution is smooth, the residual has a convergence rate of the scheme which is $\calO(h^2)$ in this case.  

\end{remark}
}

\subsection{Time-stepping}
Once the MHD system is discretized in space using the continuous finite element method we obtain the system of ODEs \eqref{eq:ODE}. Let us denote this system as
\[
  \polM\, \p_t \bsfU_h(t) = \calF(\bsfU_h(t), \e^{\textrm{RV}}_h(\bsfU_h(t))),
\]
where $\polM \in \mR^{(2d+2)I\times (2d+2)I}$ is a \blue{consistent} mass matrix, $\calF(\bsfU_h(t), \e^{\textrm{RV}}_h(t))$ is the right-hand-side function of the system which depends on the solution $\bsfU_h(t)$ and viscosity $\e^{\textrm{RV}}_h(\bsfU_h(t))$. 

\blue{
Recall that $\bsfU^n_h$ is the finite element approximation of the solution $\bsfU(t^n, \bx)$ at time $t^n>0$ with the nodal values $\bsfU_j^n$. 
}
Next, we discretize this system in time using explicit $r$-stage Runge-Kutta methods:
\begin{equation}\label{eq:ho:mhd}
  \bsfU^{n+1}_h = \bsfU^{n}_h + \tau_n (b_1 \bsfK_1 + \ldots + b_r \bsfK_r),
\end{equation}
where $b_i$, $i=1,\ldots,r$ are coefficients obtained from the Butcher tableau, and the stage variables $\bsfK_i$ are computed as follows: for the given solution $\bsfU_h^n$ and the viscosity $\e_h^{\textrm{RV},n}:=\e^{\textrm{RV}}_h(\bsfU_h^n)$ at time level $t^n$, set $\bsfW_0:= \bsfU_h^n$, then let $\bsfW_l$ be the solution at the $l$-th stage of the Runge-Kutta method, then compute $\bsfK_l$ by solving the following system:
\[
  \polM\, \bsfK_l = \bsfF(\bsfW_l,\, \e^{\textrm{RV},n}_h),
\]
for all $l=0,\ldots, r$. Note that the viscosity coefficients can also be computed on the fly at every Runge-Kutta stage. However, in this work, the viscosity coefficient is constructed from the previous time level and does not change within the stages.

The time-step $\tau_n$ is computed using the following CFL condition:
\begin{equation}\label{eq:cfl}
  \tau_n = \text{CFL } \max_{i\in \calV} \lambda_{max,i}(\bsfU^n_h) \, \Phi_i^{\polP_1, \text{fine}}.
\end{equation}

\subsection{Boundary conditions}\label{sec:boundary_conditions:mhd}
In the following numerical examples, we use the following boundary conditions: Dirichlet, periodic, and slip or impermeability boundary conditions. The periodic boundary condition is implemented by mapping node points on two opposite boundaries of rectangular regions.

The slip boundary condition is implemented by replacing
$
\bbm_j^{n+1}
$
by
$
\bbm_j^{n+1} - (\bbm_j^{n+1} \SCAL \bn_j) \bn_j
$
 for any nodes $\bN_j$ on the sliding boundary, where $\bn_j$ is the outward pointing unit normal vector to the boundary. Note that the slip and Dirichlet conditions are strictly imposed as a correction step after each Runge-Kutta solution. More precisely, to calculate the solution $\bsfU_h^{n+1}$ at time $t^{n+1}$, the slip and Dirichlet boundary conditions are imposed by specifying the values of the solution at the boundary nodes $\bN_j $ according to its boundary data.

\subsection{Summary and the algorithm for solving MHD}
We conclude this section by presenting a concise summary of the time-stepping approach used to solve the MHD equations, as outlined in the following algorithm:
\begin{algorithm}[H]
  \renewcommand{\algorithmicrequire}{\textbf{Input:}}
  \renewcommand{\algorithmicensure}{\textbf{Output:}}
  \caption{High-order stabilization for the MHD equations}
  \label{alg:rv:mhd}
  \begin{algorithmic}[1]
    \Require $\bsfU_h^{0}$, $\calT_h$
    \Ensure $\bsfU_h^N$
    \State Construct the space $\polP_k$ on $\calT_h$
    \State Construct a fine mesh $\calT_h^{\textrm{fine}}$ with vertices on the nodes of $\polP_k$
    and construct the space $\polP_1$ on $\calT_h^{\textrm{fine}}$
    \State Compute $\Phi^{\polP_1, \textrm{fine}}_i$ and $m^{\polP_1, \textrm{fine}}_i$ for every $i\in \calV$
    \For {$n\gets 0$ to $N$} 
    \State Compute the residual $\bsfR(\bsfU^n)$ for every $i\in \calV$
    \State Compute $\e^{\textrm{RV},n}_{i}$ from \eqref{eq:rv:mhd} for every $i\in \calV$
    \State Solve \eqref{eq:ho:mhd} to get $\bsfU_h^{n+1}$
    \State Clean the divergence of $\bB_h$ as in Section~\ref{sec:div_cleaning} and update dependent variables
    \State Apply the Dirichlet or slip boundary conditions strongly as in Section~\ref{sec:boundary_conditions:mhd}
    \State Determine $\lambda_{\max,i}$ using \eqref{eq:vloc_max} for every $i\in\calV$
    \State Determine the next time step size $\tau_n$ using \eqref{eq:cfl}    \EndFor
  \end{algorithmic}
\end{algorithm}

\section{Numerical examples}
\label{sec:num}
In this section, we demonstrate the efficiency of our proposed stabilization method by solving several well-known benchmark problems. We start by solving a smooth problem, where the initial data and final time are chosen such that the exact solution is readily available. Our primary objective is to investigate the preservation of high-order accuracy across various polynomial degrees. Then, we continue our discussion by solving more challenging benchmarks related to ideal MHD, covering scenarios with strong shocks and discontinuities.

All error norms presented below are relative norms. The time-stepping in this section is performed using the classical fourth order Runge-Kutta method.

\subsection{Accuracy test: Smooth vortex problem from \cite{Wu_et_al_2018}}
We use this benchmark to confirm high-order accuracy of the proposed method. For this problem, we define the computational domain as a square $\Omega = [-10, 10] \times [-10, 10]$. The reference solution is a stationary flow with a vortex perturbation $(\rho(t), \bu(t), p(t), \bB(t))$ $=(\rho_0, \bu_0+\delta\bu, p_0+\delta p, \bB_0+\delta\bB)$, where $\rho_0 = 1$,  $\bu_0 = (1,1)$, $\delta u = \frac{\mu}{\pi\sqrt 2}e^{(1-r^2)/2}(-r_2,r_1)$, $p_0=0$, $\delta p = -\frac{\mu^2(1+r^2)e^{1-r^2}}{8\pi^2}$, $\bB_0=(0.1,0.1)$, $\delta B = \frac{\mu e^{(1-r^2)/2}}{2\pi}(-r_2,r_1)$, the vortex radius $r=\sqrt{r_1^2+r_2^2}$, $(r_1,r_2)=(x,y)-\bu_0t$, and the vortex strength $\mu=1.0$. The adiabatic constant is  $\gamma=\frac{5}{3}$.

We proceed to solve the problem using three different polynomial spaces: $\polP_1$, $\polP_2$, and $\polP_3$ on \blue{unstructured} meshes and present the solution's convergence in Table~\ref{table:convergence_vortex_P1}. The CFL number for all polynomial spaces is 0.1. The errors measured at final time $t=0.05$. Our analysis reveals optimal convergence rates for $\polP_1$ and $\polP_3$, corresponding to the second and fourth orders in L$^1$-norm, respectively. However, it is worth noting that the convergence rate for the second-order polynomial is suboptimal, a characteristic often associated with even-order finite elements, as discussed in references such as \citep{Nazarov_Larcher_2017}. 

\blue{The rates of convergence in L$^2$ and L$^{\infty}$ norms are somewhat suboptimal for $\polP_3$. This may be due to the use of unstructured meshes and floating error in the numerical integration of the L$^2$-norm. To calculate the L$^2$-norm, we interpolate the error into the space $\polP_5$, so that the resulting integrand becomes a function from the space $\polP_{10}$ which requires many quadrature points for exact integration.}

\begin{table}[h!]
  \centering
  \caption{Smooth vortex problem. Errors in $\bu_h$ and $\bB_h$ at time $t=0.05$}
  \label{table:convergence_vortex_P1}

  \vspace{0.1in}
  \centering{$\polP_1$ solution}

  \begin{adjustbox}{max width=\textwidth}
    \begin{tabular}{|c||c|c|c|c|c|c||c|c|c|c|c|c|}
      \hline
      \multirow{2}{*}{\#DOFs} &  \multicolumn{6}{c||}{$\bu_h$} &  \multicolumn{6}{c|}{$\bB_h$}  \\ \cline{2-13}
     {}   &       L$^1$   &    Rate   &       L$^2$   &    Rate & L$^{\infty}$   &    Rate &       L$^1$   &    Rate   &       L$^2$   &    Rate & L$^{\infty}$   &    Rate   \\ \hline
       8428 &  8.40E-05 &     -- &   4.72E-04 &     -- &   5.84E-03 &     -- & 1.81E-02 &     -- &   2.03E-02 &     -- &   3.21E-02 &     -- \\ 
      18770 &  3.63E-05 &   2.10 &   2.03E-04 &   2.10 &   2.56E-03 &   2.06 & 7.80E-03 &   2.12 &   8.75E-03 &   2.12 &   1.41E-02 &   2.08 \\ 
      42182 &  1.58E-05 &   2.05 &   8.88E-05 &   2.05 &   1.13E-03 &   2.03 & 3.40E-03 &   2.07 &   3.82E-03 &   2.06 &   6.16E-03 &   2.06 \\ 
      95374 &  6.91E-06 &   2.03 &   3.88E-05 &   2.03 &   4.91E-04 &   2.04 & 1.49E-03 &   2.04 &   1.67E-03 &   2.04 &   2.68E-03 &   2.05 \\
     213672 &  3.06E-06 &   2.02 &   1.72E-05 &   2.02 &   2.17E-04 &   2.02 & 6.57E-04 &   2.03 &   7.38E-04 &   2.03 &   1.19E-03 &   2.03 \\
     480976 &  1.35E-06 &   2.01 &   7.58E-06 &   2.01 &   9.60E-05 &   2.01 & 2.90E-04 &   2.02 &   3.26E-04 &   2.02 &   5.23E-04 &   2.02 \\ \hline
    \end{tabular}
  \end{adjustbox}
  \\
  \centering
  \vspace{0.1in}
  $\polP_2$ solution
  \begin{adjustbox}{max width=\textwidth}
    \begin{tabular}{|c||c|c|c|c|c|c||c|c|c|c|c|c|}
      \hline
      \multirow{2}{*}{\#DOFs} &  \multicolumn{6}{c||}{$\bu_h$} &  \multicolumn{6}{c|}{$\bB_h$}  \\ \cline{2-13}
     {}   &       L$^1$   &    Rate   &       L$^2$   &    Rate & L$^{\infty}$   &    Rate &       L$^1$   &    Rate   &       L$^2$   &    Rate & L$^{\infty}$   &    Rate   \\ \hline
       8520 &  2.13E-05 &     -- &   1.23E-04 &     -- &   2.28E-03 &     -- & 5.18E-03 &     -- &   5.34E-03 &     -- &   1.17E-02 &     -- \\ 
      18872 &  7.26E-06 &   2.71 &   4.09E-05 &   2.76 &   7.16E-04 &   2.91 & 1.80E-03 &   2.69 &   1.81E-03 &   2.75 &   3.68E-03 &   2.94 \\ 
      43240 &  2.76E-06 &   2.33 &   1.49E-05 &   2.44 &   3.36E-04 &   1.82 & 6.70E-04 &   2.40 &   6.63E-04 &   2.45 &   1.64E-03 &   1.96 \\ 
      96568 &  1.16E-06 &   2.16 &   5.99E-06 &   2.26 &   1.62E-04 &   1.82 & 2.63E-04 &   2.34 &   2.62E-04 &   2.33 &   7.62E-04 &   1.92 \\
     214400 &  5.09E-07 &   2.06 &   2.57E-06 &   2.12 &   7.59E-05 &   1.90 & 1.12E-04 &   2.16 &   1.11E-04 &   2.17 &   3.47E-04 &   1.98 \\
     482032 &  2.24E-07 &   2.02 &   1.12E-06 &   2.06 &   3.42E-05 &   1.97 & 4.85E-05 &   2.06 &   4.78E-05 &   2.08 &   1.56E-04 &   1.98 \\ \hline
    \end{tabular}
  \end{adjustbox}
  \vspace{0.1in}
  $\polP_3$ solution
  \\
  \begin{adjustbox}{max width=\textwidth}
    \begin{tabular}{|c||c|c|c|c|c|c||c|c|c|c|c|c|}
      \hline
      \multirow{2}{*}{\#DOFs} &  \multicolumn{6}{c||}{$\bu_h$} &  \multicolumn{6}{c|}{$\bB_h$}  \\ \cline{2-13}
     {}   &       L$^1$   &    Rate   &       L$^2$   &    Rate & L$^{\infty}$   &    Rate &       L$^1$   &    Rate   &       L$^2$   &    Rate & L$^{\infty}$   &    Rate   \\ \hline
      8460  &  1.33E-05 &     -- &   1.16E-04 &     -- &   3.70E-03 &     -- & 2.92E-03 &     -- &   4.96E-03 &     -- &   2.05E-02 &     -- \\ 
      19170 &  1.73E-06 &   4.99 &   8.31E-06 &   6.44 &   1.85E-04 &   7.32 & 3.73E-04 &   5.09 &   3.52E-04 &   6.54 &   8.21E-04 &   7.96 \\ 
      42462 &  3.32E-07 &   4.14 &   1.69E-06 &   4.00 &   4.15E-05 &   3.76 & 7.17E-05 &   4.18 &   7.15E-05 &   4.04 &   2.00E-04 &   3.58 \\ 
      97290 &  6.06E-08 &   4.11 &   3.44E-07 &   3.85 &   8.80E-06 &   3.74 & 1.30E-05 &   4.13 &   1.45E-05 &   3.87 &   4.79E-05 &   3.47 \\
     217278 &  1.12E-08 &   4.20 &   7.60E-08 &   3.76 &   2.18E-06 &   3.47 & 2.38E-06 &   4.25 &   3.19E-06 &   3.78 &   1.11E-05 &   3.66 \\
     482400 &  2.21E-09 &   4.07 &   1.89E-08 &   3.49 &   5.84E-07 &   3.31 & 4.67E-07 &   4.09 &   7.93E-07 &   3.50 &   2.76E-06 &   3.48 \\ \hline
    \end{tabular}
  \end{adjustbox}
\end{table}

\subsection{Brio-Wu MHD shock tube problem \citep{Brio_Wu_1988}}
This benchmark is a popular one dimensional Riemann problem for ideal MHD. This test is a typical way to verify if a numerical method can resolve different nonlinear waves of the MHD system. The domain is $\Omega = [0, 1]$. The adiabatic constant is $\gamma=2$. At the initial state, the left profile is given by
\[
  (\rho, u, p, B_x) = (1,0,1,0.75), B_y = 1.
\]
The right profile is given by
\[
  (\rho, u, p, B_x) = (0.125,0,0.1,0.75), B_y = -1.
\]
These profiles imply the Dirichlet boundary conditions at $x=0$ and $x=1$ for all time $t > 0$, which are imposed strongly in every time step. The CFL number for all polynomial degrees used in this problem is 0.3.
Note that, even though the problem setting is in 1D, we solve it in 2D by setting the $y$ coordinate to change in $\in[0,h]$, here $h$ is the size of the interval in the $x$-direction. We then impose a periodic boundary condition on the $y$-direction. We run the simulations until the final time $t = 0.1$.

\red{
First, we numerically investigate the first-order viscosity constructed in Definition~\ref{def:nodal}. The first-order solutions using $\polP_1$ and $\polP_3$ elements under different resolutions are plotted in Figure~\ref{fig:briowu_firstorder}(a) and the corresponding amounts of artificial viscosity are plotted in Figure~\ref{fig:briowu_firstorder}(b). Under the same number of computational nodes, the first-order $\polP_1$ and $\polP_3$ solutions and added amounts of viscosity are nearly identical. Similar convergence behaviors between $\polP_1$ and $\polP_3$ when first-order viscosity is used can be seen in Table~\ref{table:convergence_briowu_firstorder}. Results using high-order residual-based viscosity are shown in Figure~\ref{fig:briowu}. Again, we can see very similar $\polP_1$ and $\polP_3$ solutions under the same number of nodes. The added amounts of artificial viscosity are also close in terms of magnitude and locality. This observation shows that the viscosity construction delivers excellent localization for the higher-order polynomial $\polP_3$: the viscosity is not spreaded out albeit wider in stencils. The upper bound $\varepsilon^L_h$ is hit at the first time steps due to the sharp jump. In Figure~\ref{fig:briowu_firstorder} and \ref{fig:briowu}, the solutions are compared with a fine reference solution given by the Athena code \citep{Stone_2008}. From the plots, one can see that the compound waves are finely resolved. Our solutions agree well with the reference solution. The $\polP_3$ solutions admit more oscillatory effects which is expected due to the high-order nature. Convergence rates can be seen in Table~\ref{table:convergence_briowu}. The $\polP_3$ solutions render higher errors for this test case because of the vanishing oscillations visible in the zoomed-in plots. We will see that this is not always the case. In the following benchmarks, $\polP_3$ outperforms $\polP_1$. These oscillations on the $\polP_3$ solution could be reduced by projecting the initial condition to the finite element space by some elliptic smoothing. In this work, the usual finite element interpolation is used to interpolate the initial condition into $\polP_3$.
}

\begin{figure}[!ht]
  \centering
  \begin{subfigure}{0.49\textwidth}
    \centering
    \includegraphics[width=\textwidth]{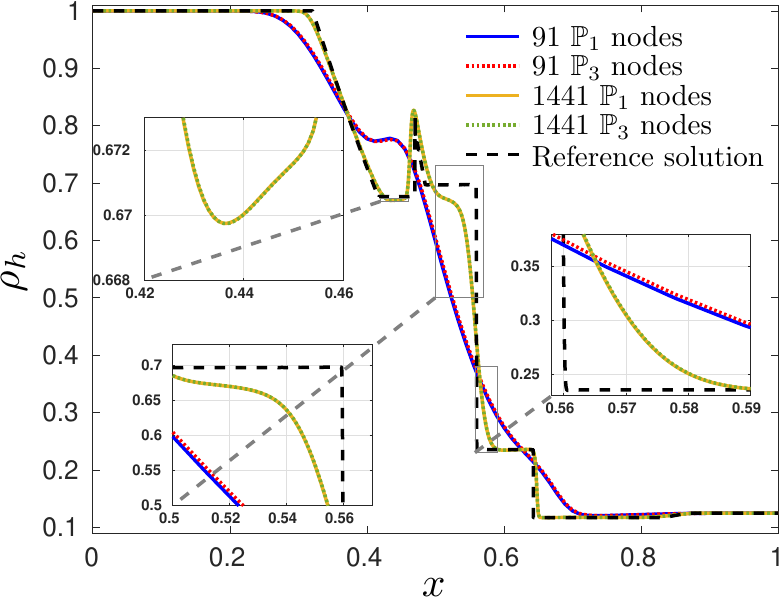}
    \caption*{$\polP_1$, $\polP_3$ density solutions}
  \end{subfigure}
  \hfill
  \begin{subfigure}{0.49\textwidth}
    \centering
    \includegraphics[width=\textwidth]{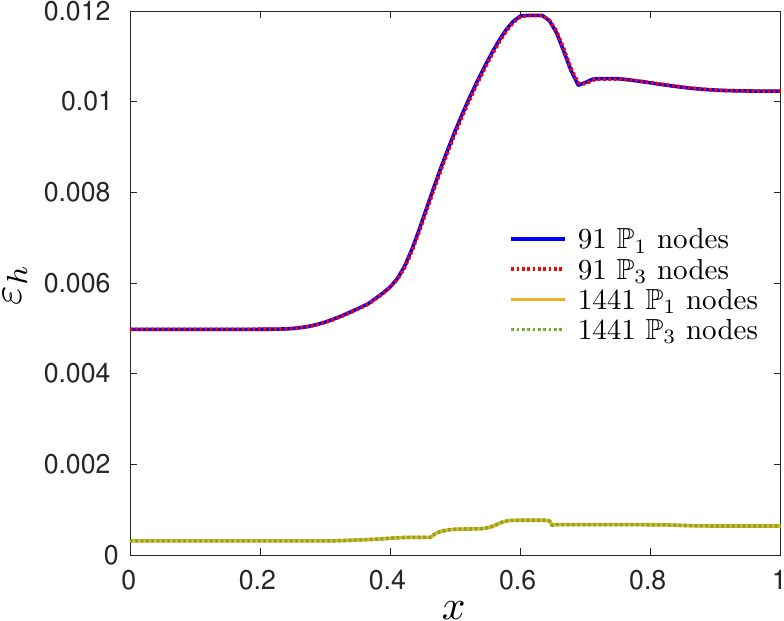}
    \caption*{$\polP_1$, $\polP_3$ first-order viscosity}
  \end{subfigure}
  \caption{First-order solutions to the Brio-Wu problem and the corresponding amounts of artificial viscosity. The reference solution is produced by the Athena code \citep{Stone_2008} using 10001 grid points.}
  \label{fig:briowu_firstorder}
\end{figure}

\begin{table}[h!]
  \centering
  \caption{Errors in density solution $\rho_h$ in the Brio-Wu problem using first-order viscosity. The errors are measured at time $t=0.1$}
  \label{table:convergence_briowu_firstorder}

  \begin{adjustbox}{max width=\textwidth}
    \begin{tabular}{|c||c|c|c|c||c|c|c|c|}
      \hline
      \multirow{2}{*}{\#nodes} &  \multicolumn{4}{c||}{$\polP_1$ solution} &  \multicolumn{4}{c|}{$\polP_3$ solution}  \\ \cline{2-9}
     {}   &       L$^1$   &    Rate   &       L$^2$   &    Rate  &       L$^1$   &    Rate   &       L$^2$ & Rate  \\ \hline
       91 &  6.01E-02 &     -- &   1.01E-01 &     --  & 5.96E-02 &     -- &   9.59E-02 &     -- \\ 
      181 &  4.33E-02 &   0.48 &   8.27E-02 &   0.29  & 4.13E-02 &   0.53 &   7.88E-02 &   0.29 \\ 
      361 &  3.22E-02 &   0.43 &   6.90E-02 &   0.26  & 3.14E-02 &   0.40 &   6.64E-02 &   0.25 \\ 
      721 &  2.30E-02 &   0.49 &   5.55E-02 &   0.32  & 2.33E-02 &   0.43 &   5.61E-02 &   0.25 \\
     1441 &  1.64E-02 &   0.49 &   4.49E-02 &   0.31  & 1.65E-02 &   0.49 &   4.51E-02 &   0.32 \\ \hline
    \end{tabular}
  \end{adjustbox}
\end{table}

\begin{figure}[!ht]
  \centering
  \begin{subfigure}{0.49\textwidth}
    \centering
    \includegraphics[width=\textwidth]{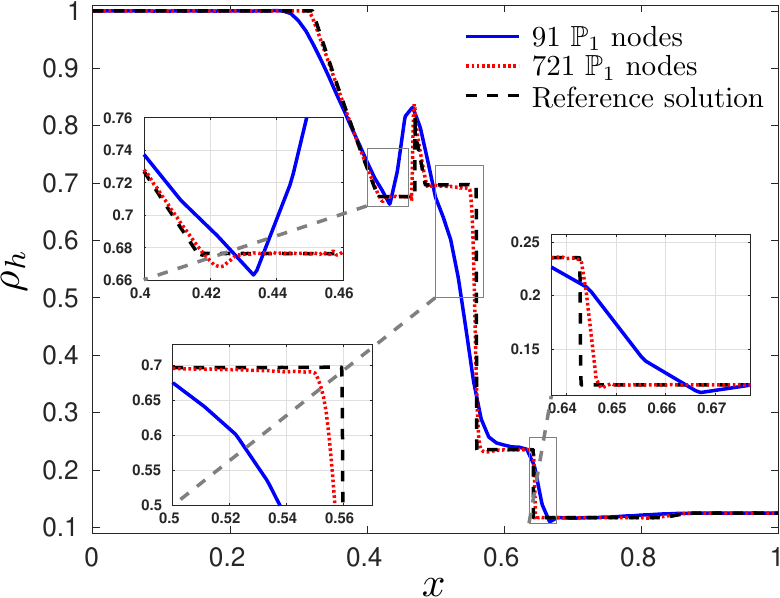}
    \caption*{$\polP_1$ solutions}
  \end{subfigure}
  \hfill
  \begin{subfigure}{0.49\textwidth}
    \centering
    \includegraphics[width=\textwidth]{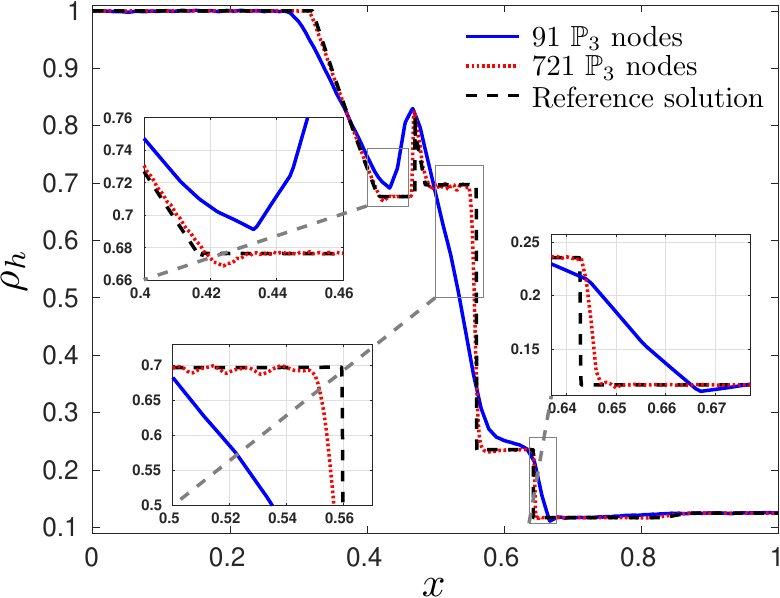}
    \caption*{$\polP_3$ solutions}
  \end{subfigure}
  \caption{Convergence of the numerical solutions to the Brio-Wu problem. The high-order solution. The reference solution is produced by the Athena code \citep{Stone_2008} using 10001 grid points.}
  \label{fig:briowu}
\end{figure}

\begin{figure}[!ht]
  \centering
  \begin{subfigure}{0.49\textwidth}
    \centering
    \includegraphics[width=\textwidth]{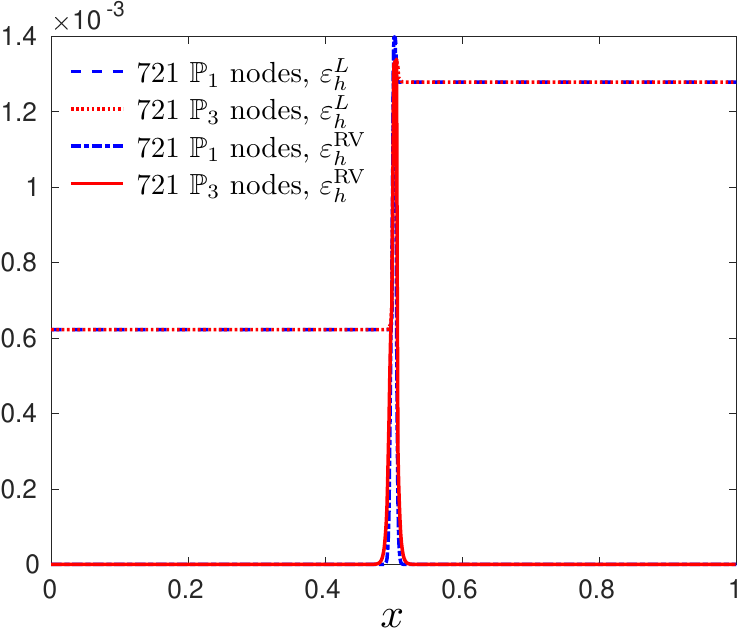}
    \caption*{Artificial viscosity at the first time step}
  \end{subfigure}
  \hfill
  \begin{subfigure}{0.49\textwidth}
    \centering
    \includegraphics[width=\textwidth]{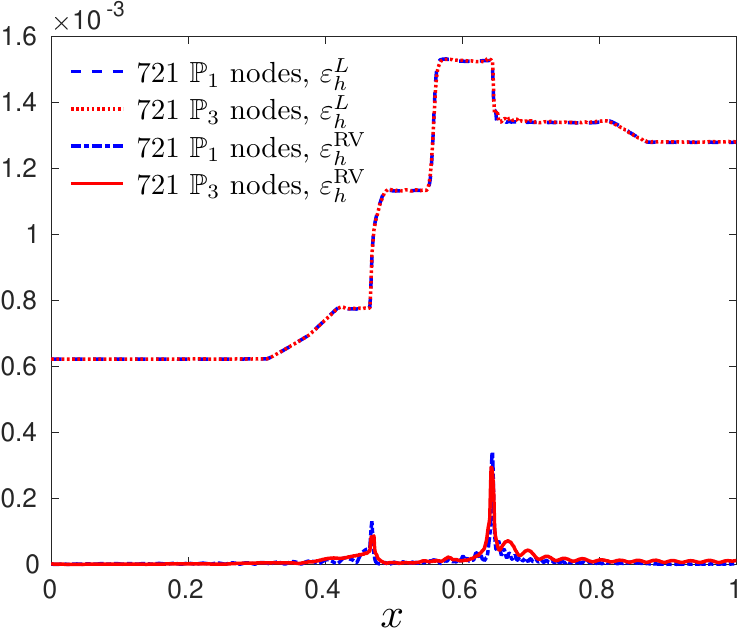}
    \caption{Artificial viscosity at the final time $t=0.1$}
  \end{subfigure}
  \caption{The amount of artificial viscosity added to the numerical solutions in the Brio-Wu problem. The high-order viscosity $\varepsilon^{\text{RV}}_h$ is plotted together with its first-order upper bound $\varepsilon^L_h$.}
  \label{fig:briowu_mu}
\end{figure}

\begin{table}[h!]
  \centering
  \caption{Errors in density solution $\rho_h$ in the Brio-Wu problem using high-order residual viscosity. The errors are measured at time $t=0.1$}
  \label{table:convergence_briowu}

  \begin{adjustbox}{max width=\textwidth}
    \begin{tabular}{|c||c|c|c|c||c|c|c|c|}
      \hline
      \multirow{2}{*}{\#nodes} &  \multicolumn{4}{c||}{$\polP_1$ solution} &  \multicolumn{4}{c|}{$\polP_3$ solution}  \\ \cline{2-9}
     {}   &       L$^1$   &    Rate   &       L$^2$   &    Rate  &       L$^1$   &    Rate   &       L$^2$ & Rate  \\ \hline
       91 &  4.10E-02 &     -- &   7.36E-02 &     --  & 4.26E-02 &     -- &   7.13E-02 &     -- \\ 
      181 &  2.16E-02 &   0.93 &   4.96E-02 &   0.57  & 2.09E-02 &   1.03 &   4.71E-02 &   0.60 \\ 
      361 &  1.17E-02 &   0.89 &   3.58E-02 &   0.47  & 1.13E-02 &   0.89 &   3.27E-02 &   0.53 \\ 
      721 &  5.79E-03 &   1.02 &   2.37E-02 &   0.60  & 6.59E-03 &   0.78 &   2.56E-02 &   0.35 \\
     1441 &  2.98E-03 &   0.96 &   1.60E-02 &   0.56  & 3.58E-03 &   0.88 &   1.74E-02 &   0.56 \\ \hline
    \end{tabular}
  \end{adjustbox}
\end{table}

\subsection{Orszag-Tang problem \citep{Orszag_Tang_1979}}
The considered domain is the unit square, $\Omega = [0,1] \times [0,1]$. This example is a highly-recognizable benchmark for ideal MHD. The solution is initialized as a smooth profile,
\[
  (\rho_0, \bu_0, p_0, \bB_0)= \left(\frac{25}{36\pi},(-\sin(2\pi y),\sin(2\pi x)),\frac{5}{12\pi},\left(-\frac{\sin(2\pi y)}{\sqrt{4\pi}},\frac{\sin(4\pi x)}{\sqrt{4\pi}}\right)\right).
\]
The adiabatic constant is $\gamma=\frac{5}{3}$. Periodic boundary conditions are used in all boundaries. 

The initially smooth data in this simulation evolves, and due to the nonlinearity of the MHD system, strong shocks and discontinuities develop. These sharp layers contribute to an increase in divergence error. Previous reports in the literature, such as \citep{Li_Shu_2005}, indicate that schemes with large divergence errors struggle to solve this benchmark for longer than $t=0.5$. In this paper, the simple divergence cleaning technique presented in Section~\ref{sec:div_cleaning} performs exceptionally well for all tested polynomial degrees. Although our code was capable of running for $t>1$ and exhibited turbulence behavior in the solution, we limited our presentation to shorter times to align with existing references.

The problem is solved using $200\times200$ nodal points with $\polP_1$ and $\polP_3$ polynomial spaces. The solutions at times $t=0.5$ and $t=1$, along with the corresponding artificial viscosity coefficients, are depicted in Figure~\ref{fig:OT_t05} and \ref{fig:OT_t1}, respectively. The viscosity effectively tracks the shock, adding a small amount in the smooth region. The $\polP_3$ residual exhibits more noise, inherent to high-order polynomials. Although the $\polP_1$ and $\polP_3$ solutions are nearly identical at time $t=0.5$, a significant difference emerges at the later time $t=1.0$. At this point, the $\polP_3$ solution captures more accurate structures, including the elliptical upsurge in the middle of the domain. Further discussion on the solution behavior at different times can be found in \citep{Dao2022b}. The CFL number is 0.3 for both polynomial spaces employed in these simulations.

\begin{figure}[h!]
  \centering
  \begin{subfigure}{0.4\textwidth}
    \centering
    \includegraphics[width=\textwidth]{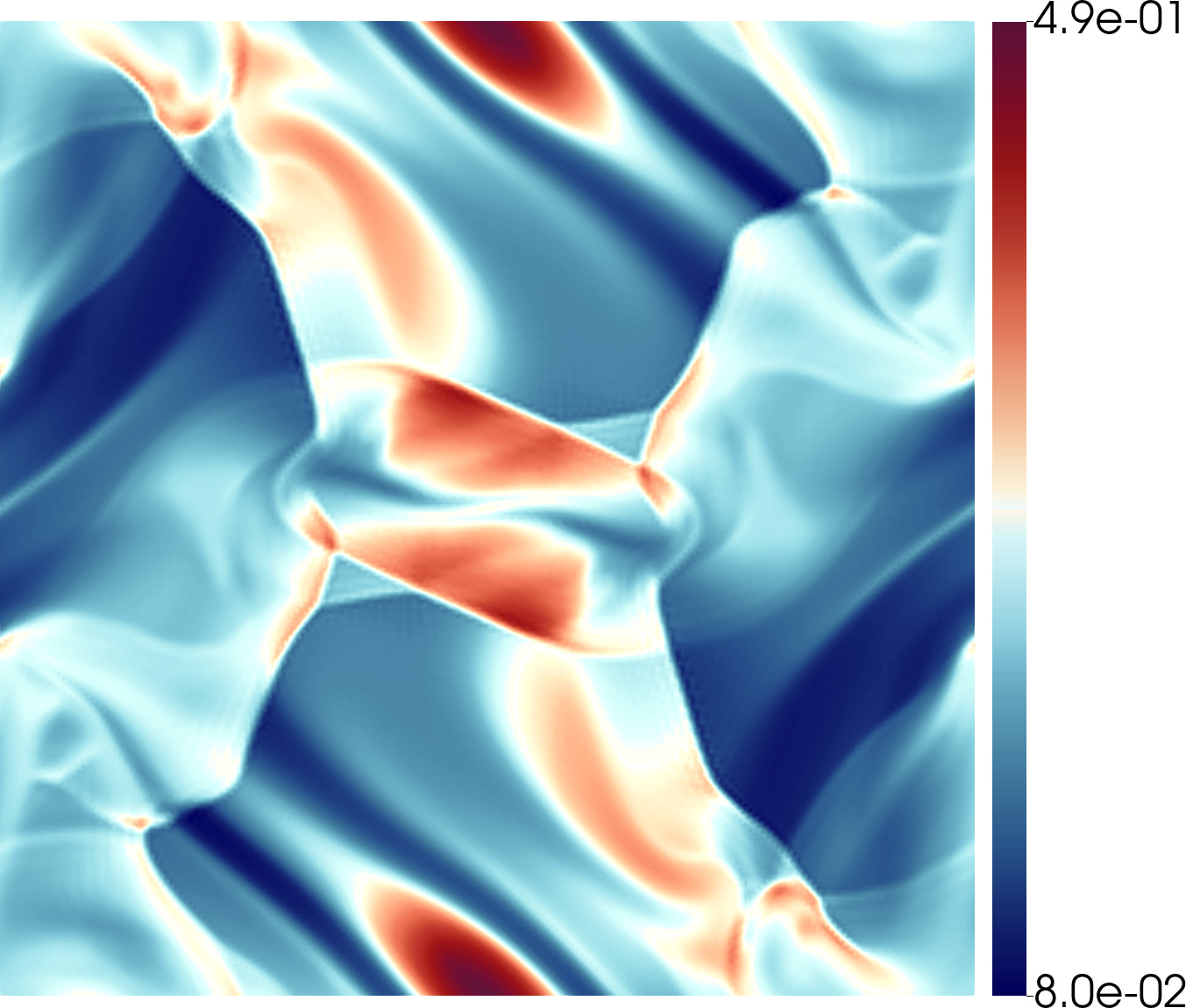}
    \caption*{Density $\rho_h$, $\polP_1$}
  \end{subfigure}
  \hspace{0.2in}
  \begin{subfigure}{0.4\textwidth}
    \centering
    \includegraphics[width=\textwidth]{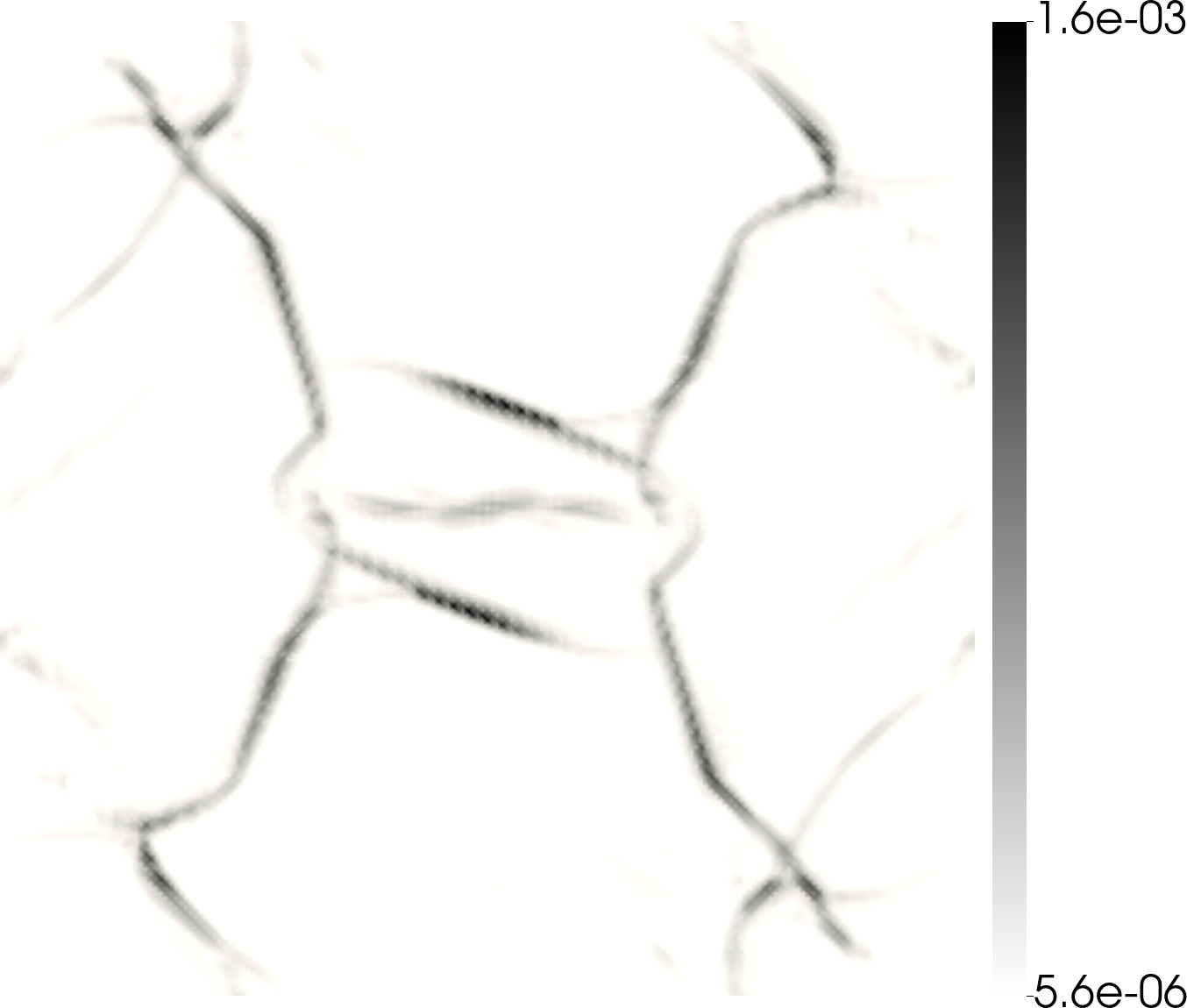}
    \caption*{Artificial viscosity $\e_h$, $\polP_1$}
  \end{subfigure}
  \\
  \vspace{0.2in}
  \begin{subfigure}{0.4\textwidth}
    \centering
    \includegraphics[width=\textwidth]{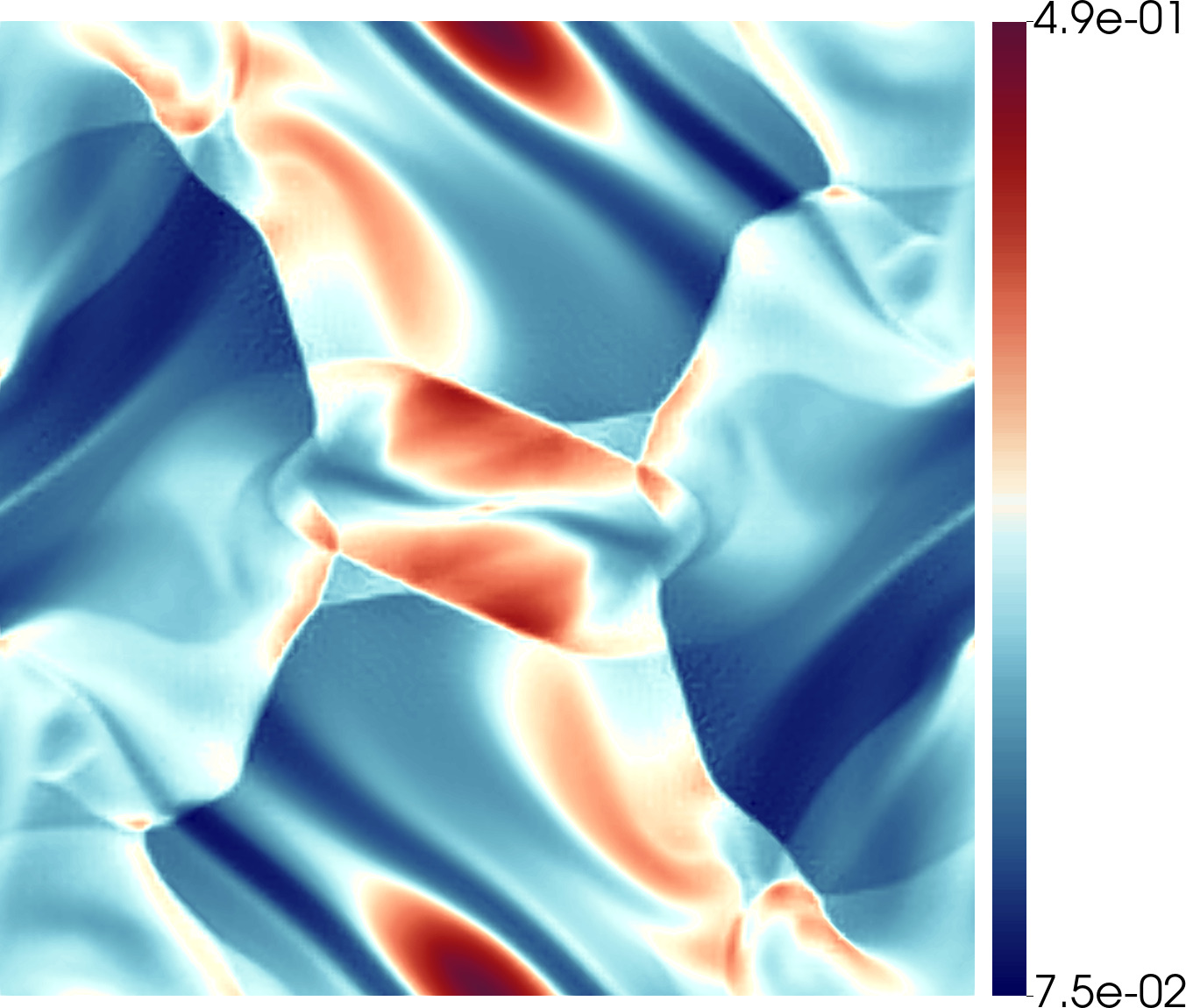}
    \caption*{Density $\rho_h$, $\polP_3$}
  \end{subfigure}
  \hspace{0.2in}
  \begin{subfigure}{0.4\textwidth}
    \centering
    \includegraphics[width=\textwidth]{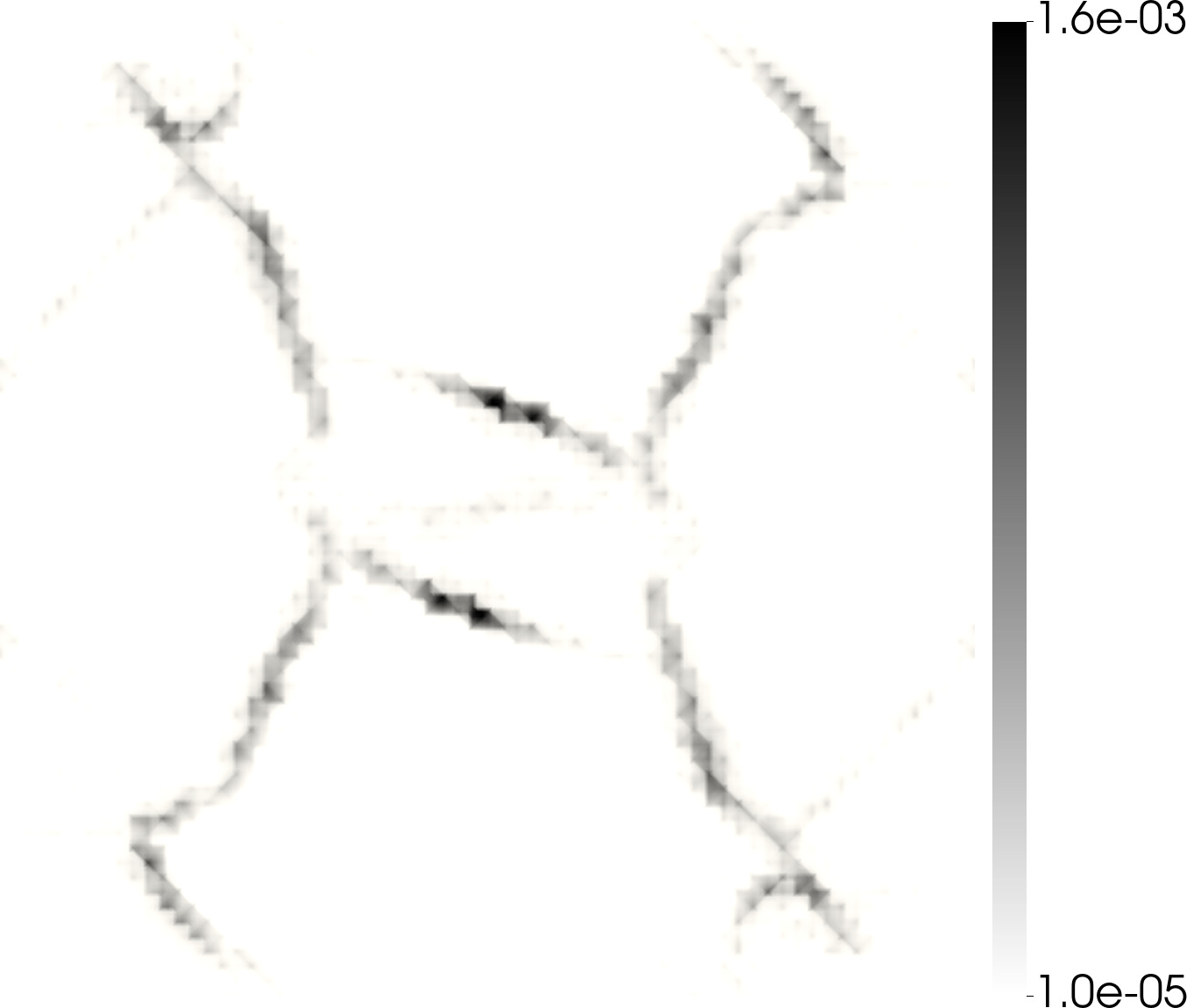}
    \caption*{Artificial viscosity $\e_h$, $\polP_3$}
  \end{subfigure}  
  \hfill
  \caption{$\polP_3$ solutions to Orszag-Tang problem at time $t=0.5$ on $200\times 200$ $\polP_1$ and $\polP_3$ nodes.}
  \label{fig:OT_t05}
\end{figure}

\begin{figure}[h!]
  \centering
  \begin{subfigure}{0.4\textwidth}
    \centering
    \includegraphics[width=\textwidth]{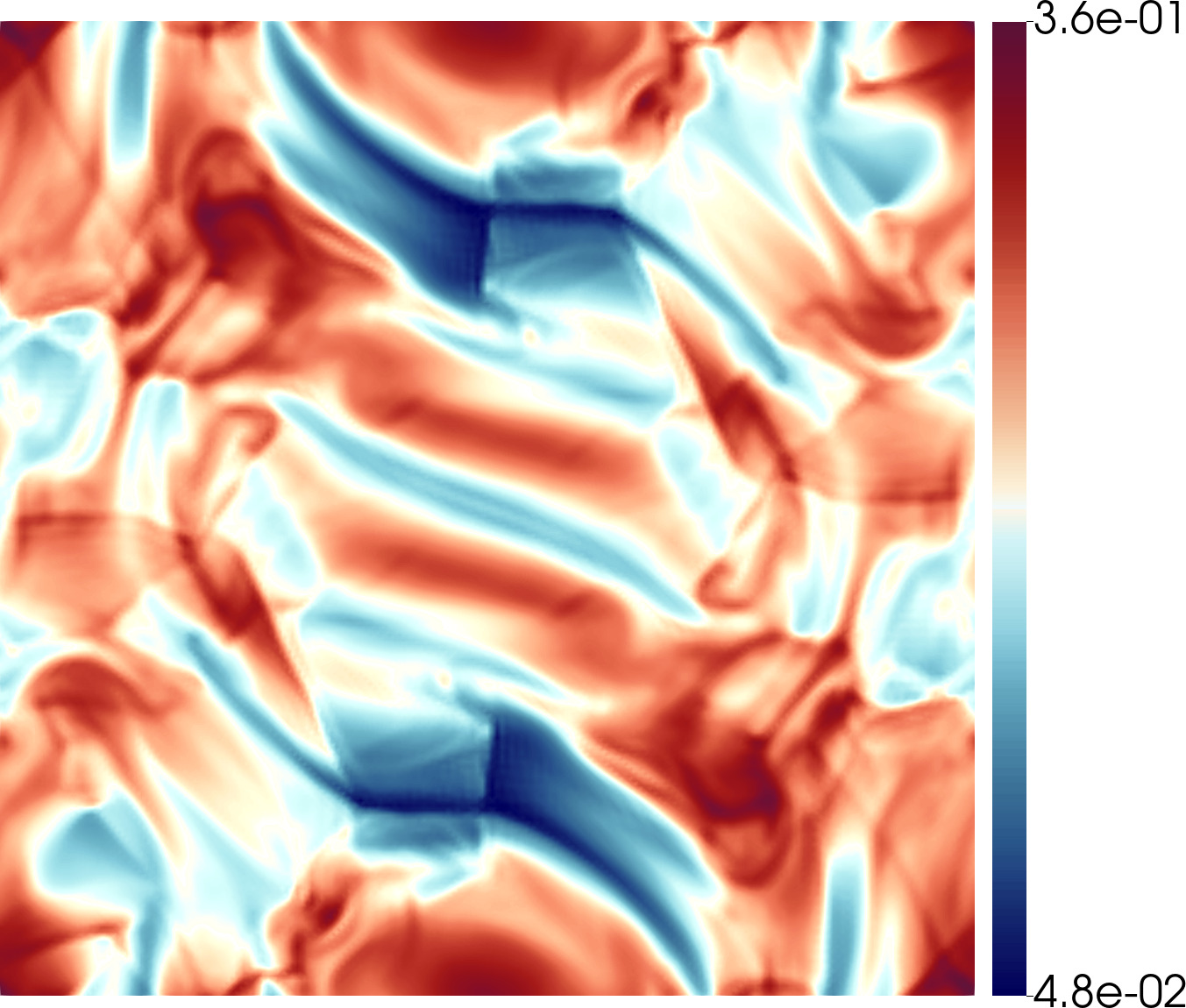}
    \caption*{Density $\rho_h$, $\polP_1$}
  \end{subfigure}
  \hspace{0.2in}
  \begin{subfigure}{0.4\textwidth}
    \centering
    \includegraphics[width=\textwidth]{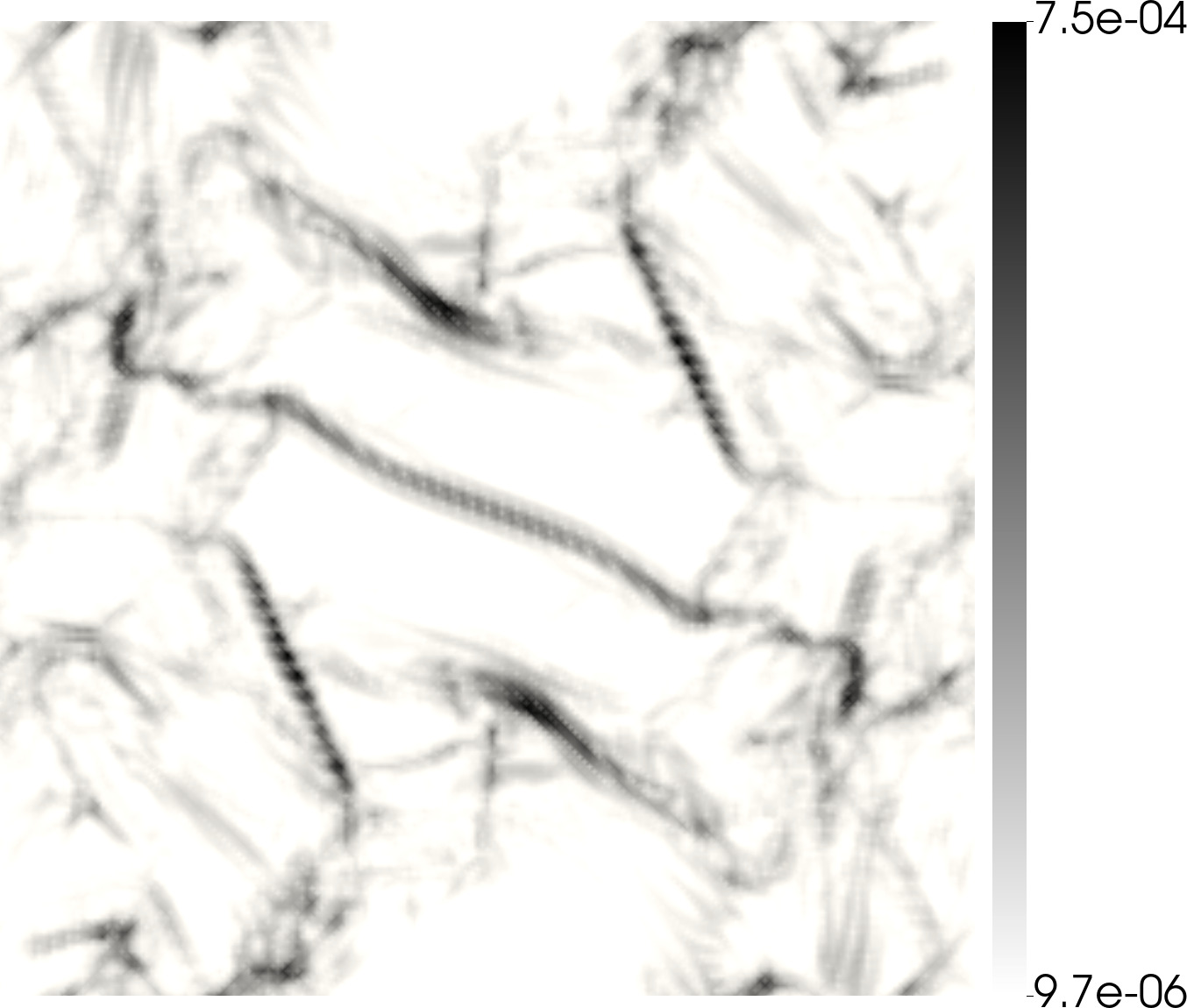}
    \caption*{Artificial viscosity $\e_h$, $\polP_1$}
  \end{subfigure}
  \\
  \vspace{0.2in}
  \begin{subfigure}{0.4\textwidth}
    \centering
    \includegraphics[width=\textwidth]{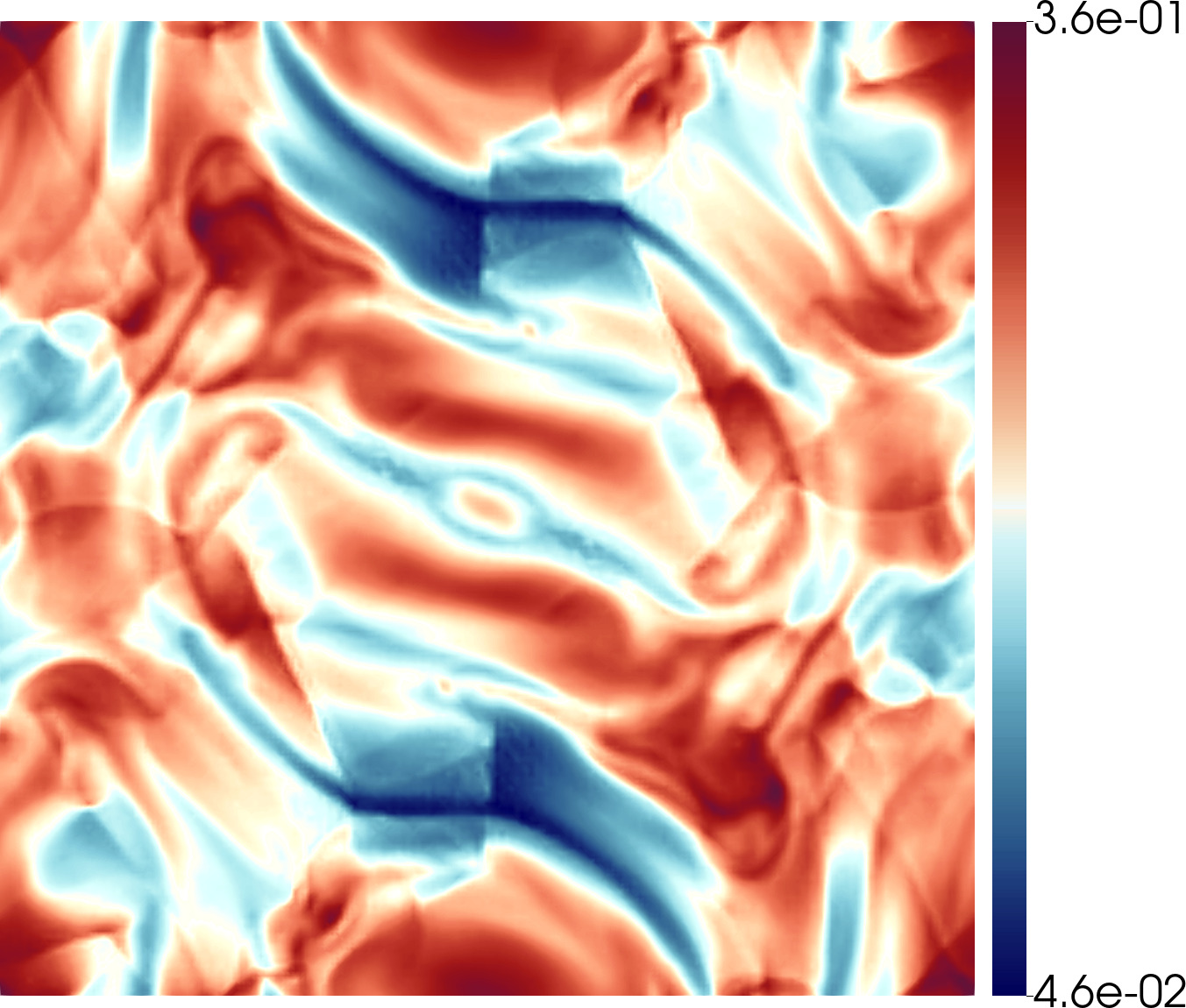}
    \caption*{Density $\rho_h$, $\polP_3$}
  \end{subfigure}
  \hspace{0.2in}
  \begin{subfigure}{0.4\textwidth}
    \centering
    \includegraphics[width=\textwidth]{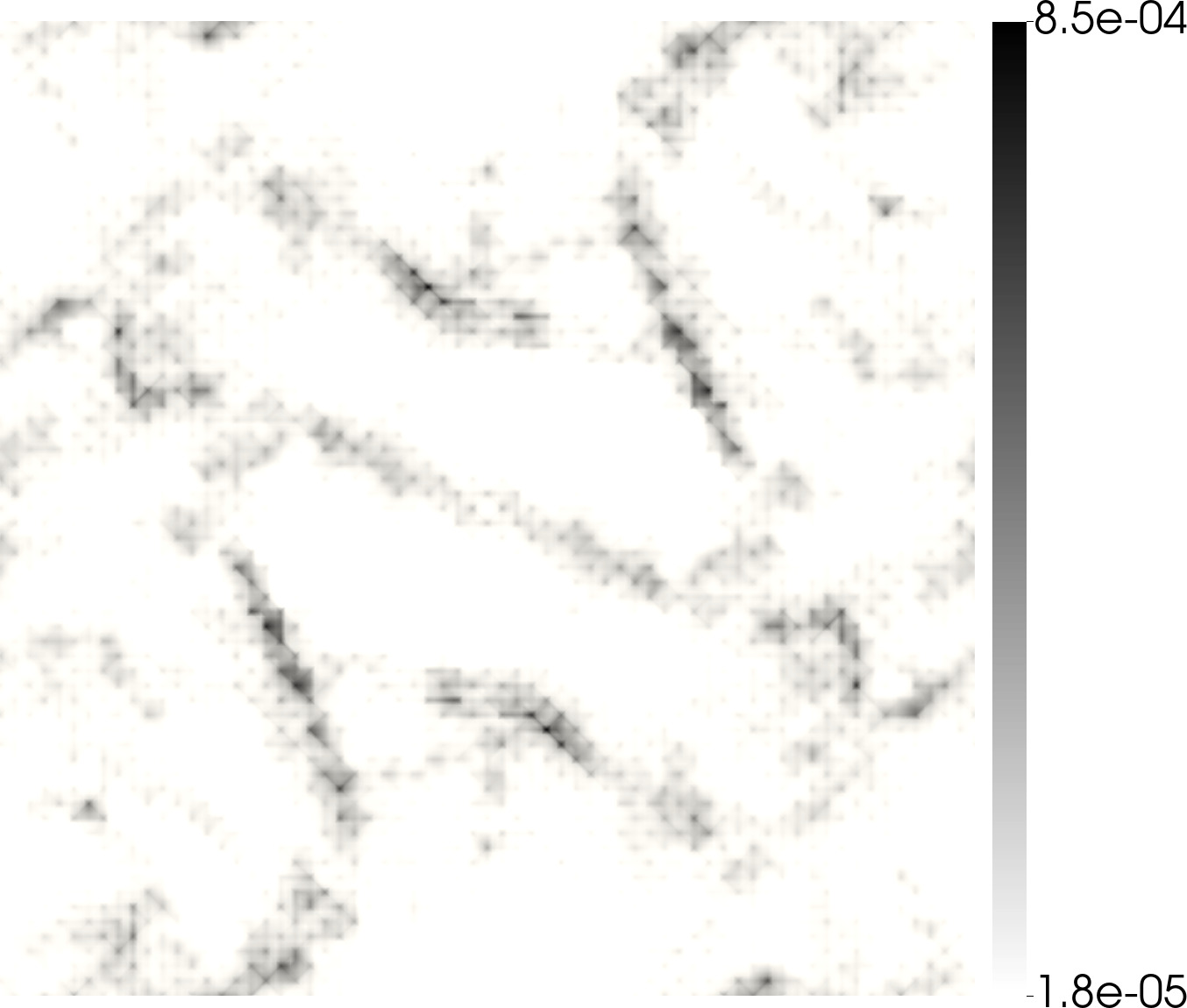}
    \caption*{Artificial viscosity $\e_h$, $\polP_3$}
  \end{subfigure}  
  \hfill
  \caption{$\polP_3$ solutions to Orszag-Tang problem at time $t=1$ on $200\times 200$ $\polP_1$ and $\polP_3$ nodes.}
  \label{fig:OT_t1}
\end{figure}

\red{We investigate the behavior of the divergence error of the numerical solutions in this test. For that purpose, the divergence error is calculated by
\begin{equation}\label{eq:div_error_norm}
  \delta(t) := \frac{ \|\text{div}\bB_h\|_{L^2(\Omega)}}{\|\nabla{\times}\bB_h)_z\|_{L^2(\Omega)}},
\end{equation}
where $\text{div}\bB_h\in\calX_h$, that is computed by solving the projection problem $(\text{div}\bB_h,\varphi_h)=\int_{\Omega}(\DIV\bB_h)\varphi_h\ud\bx\;$ 
$\forall\varphi_h\in\calX_h$. The result is plotted in Figure~\ref{fig:OT_div}. The divergence error of the $\polP_3$ solutions are slightly higher than that of the $\polP_1$ solutions under the same number of computational nodes. However, the divergence error remains controlled over time for both polynomials, and seems to be improved under mesh refinement.}

\begin{figure}[h!]
  \centering
  \begin{subfigure}{0.4\textwidth}
    \centering
    \includegraphics[width=\textwidth]{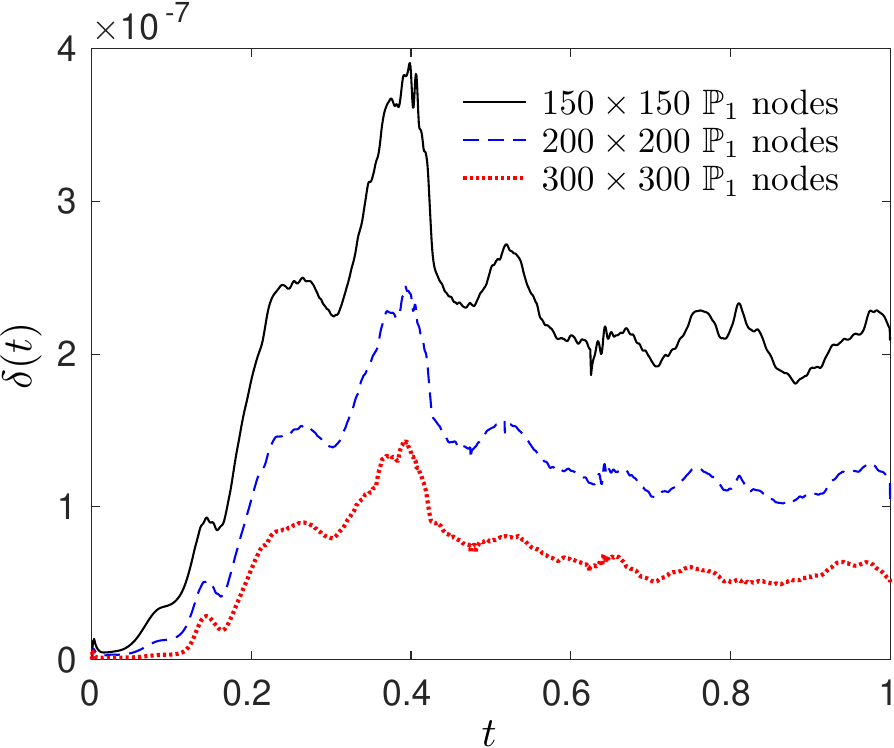}
    \caption*{$\polP_1$ solutions}
  \end{subfigure}
  \hspace{0.2in}
  \begin{subfigure}{0.4\textwidth}
    \centering
    \includegraphics[width=\textwidth]{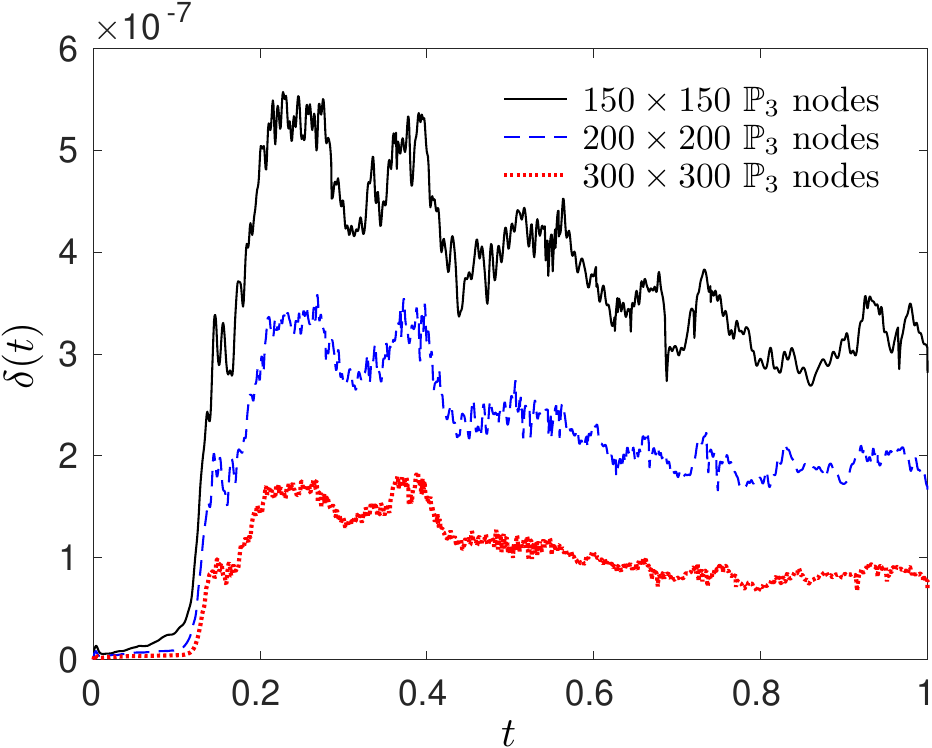}
    \caption*{$\polP_3$ solutions}
  \end{subfigure}
  \hfill
  \caption{Divergence error defined in \eqref{eq:div_error_norm} of the Orszag-Tang solutions on different mesh sizes and polynomials}
  \label{fig:OT_div}
\end{figure}


\red{
\subsection{Kelvin-Helmholtz instability}
We solve the so-called Kelvin-Helmholtz instability problem in a square domain $\Omega = [-0.5, 0.5]\times [-0.5, 0.5]$. The boundary condition is periodic in all directions. The initial condition for this problem is given as
\[
(\rho_0, \bu_0, p_0, \bB_0) = 
\begin{cases}
\Big(2, \big( &\!\!\!\!\!0.5, 0), 2.5, \big(b_x, 0\big) \Big), \quad \textrm{if } |y| \le 0.25,\\
\Big(1, \big(-&\!\!\!\!\!0.5, 0), 2.5, \big(b_x, 0\big) \Big), \quad \textrm{if } |y| > 0.25,\\
\end{cases}
\]
where $b_x$ is the magnetic field in the $x$ direction. In the numerical simulation below, we use two values: $b_x=0$ corresponds to hydrodynamic flow, and $b_x=0.2$ corresponds to magnetohydrodynamic flow.
In addition, for each nodal point of the finite elements, we add random numbers from a discrete uniform distribution in the interval -0.005 to 0.005 to the initial velocity field in both $x$ and $y$ components. Due to different values in density, the appearance of velocity shear, and noise in the initial velocity, instabilities in the solution develop over time. This instability is usually referred to as the Kelvin-Helmholtz instability. To capture and resolve this instability in time, high-order accuracy is required for the numerical methods. 

We solve the problem on two meshes with $85\times 85$ and $170\times 170$ grid points using the space of cubic polynomials $\polP_3$. The corresponding number of nodal points for these two meshes are $255\times255$ and $510\times510$ respectively. We use the classical fourth-order Runge-Kutta method with $\textrm{CFL}=0.4$.

Figure \ref{fig:KH:b0} shows the density profile for the numerical simulation for the hydrodynamic case, \ie. $b_x=0$, at time levels $t=1,2,3,4,5,6$ . We see that the small-scale features of the solution are well-resolved even for coarser mesh, which confirms the high-order accuracy of the proposed method in this paper. Figure \ref{fig:KH:b02} depicts the solution of the full magnetohydrodynamics case, where $b_x=0.2$. Here we observe that in contrast to the hydrodynamic case, the presence of the magnetic field stabilizes the flow and suppresses small-scale fluctuation. 

\begin{figure}[h!]
 \centering
  $255 \times 255$ $\polP_3$ nodes  
  \\
  \begin{subfigure}{0.32\textwidth}
    \caption*{$t=1$}
    \includegraphics[width=\textwidth]{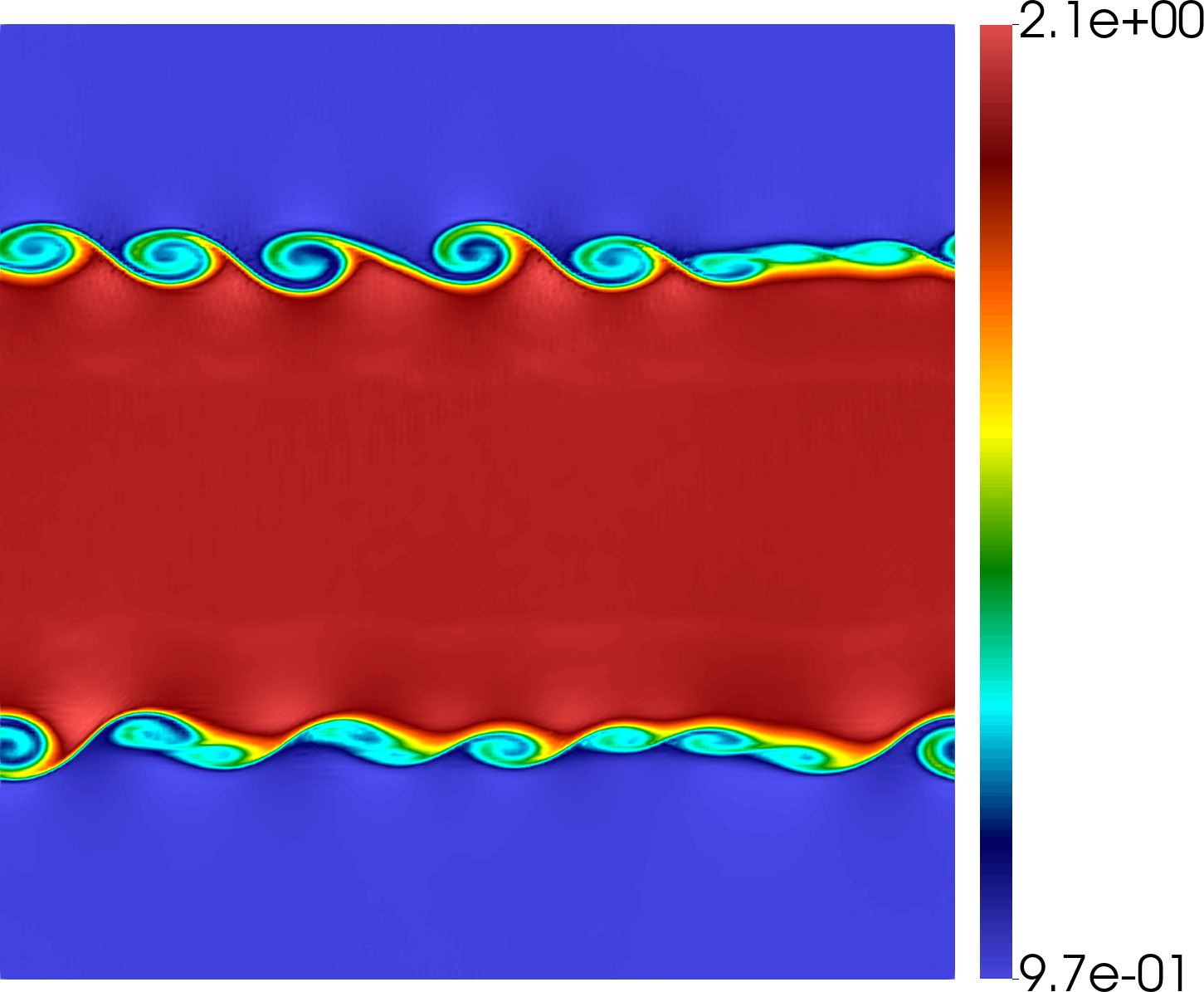}
  \end{subfigure}
  \begin{subfigure}{0.32\textwidth}
    \caption*{$t=2$}
    \includegraphics[width=\textwidth]{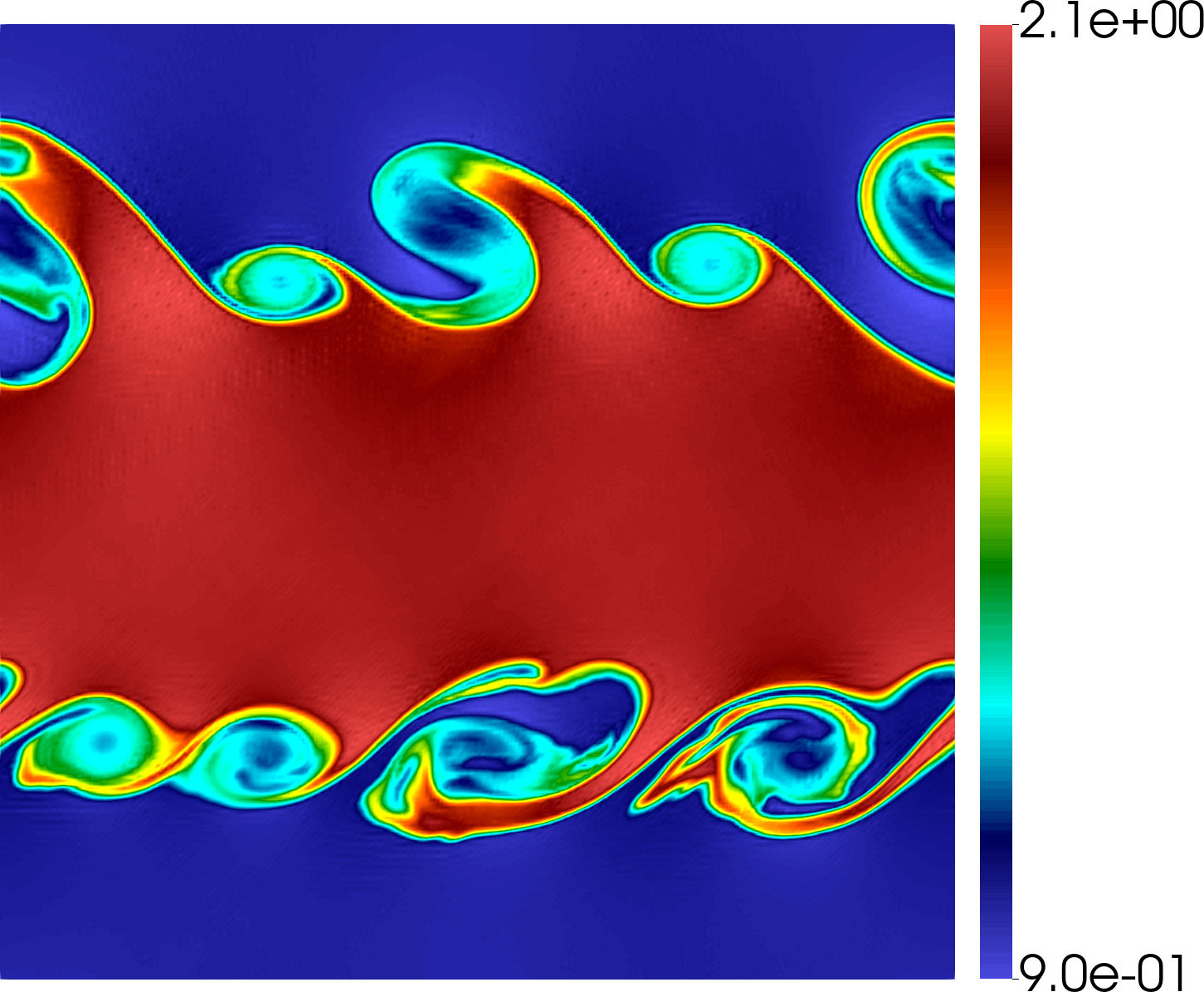}
  \end{subfigure}
  \begin{subfigure}{0.32\textwidth}
    \caption*{$t=3$}
    \includegraphics[width=\textwidth]{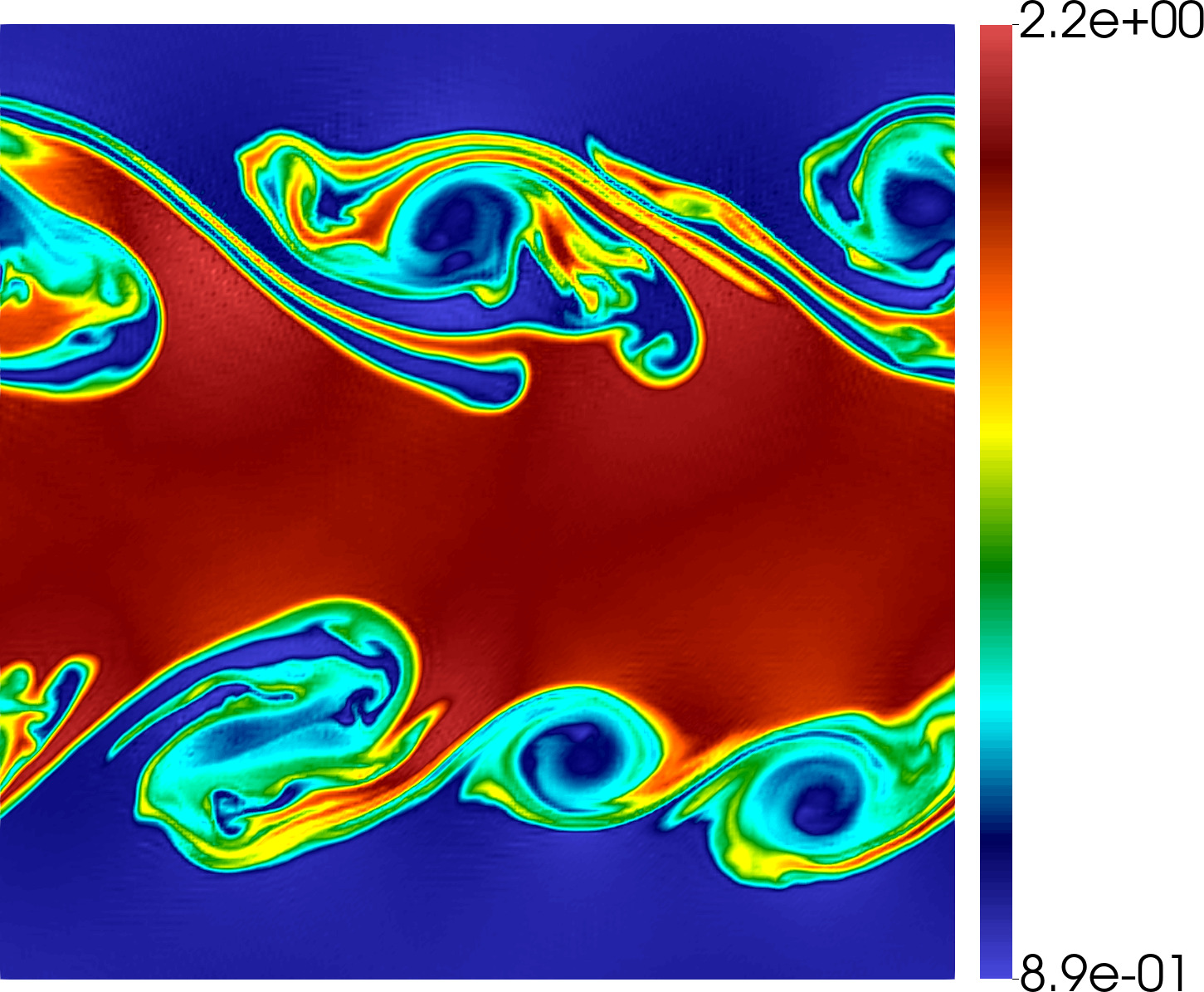}
  \end{subfigure}
  \\
  \vspace{0.05in}
  \begin{subfigure}{0.32\textwidth}
    \caption*{$t=4$}
    \includegraphics[width=\textwidth]{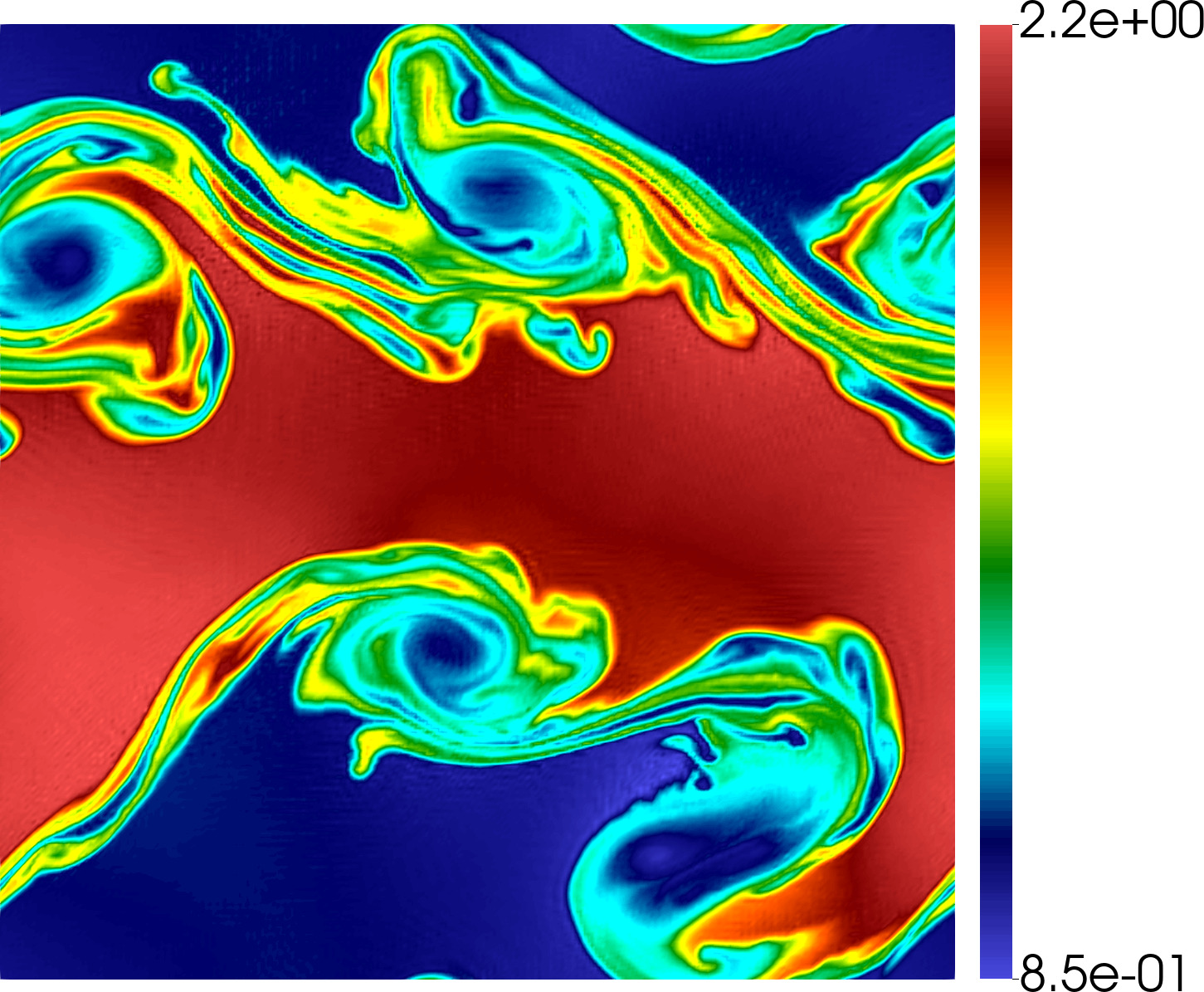}
  \end{subfigure}
  \begin{subfigure}{0.32\textwidth}
    \caption*{$t=5$}
    \includegraphics[width=\textwidth]{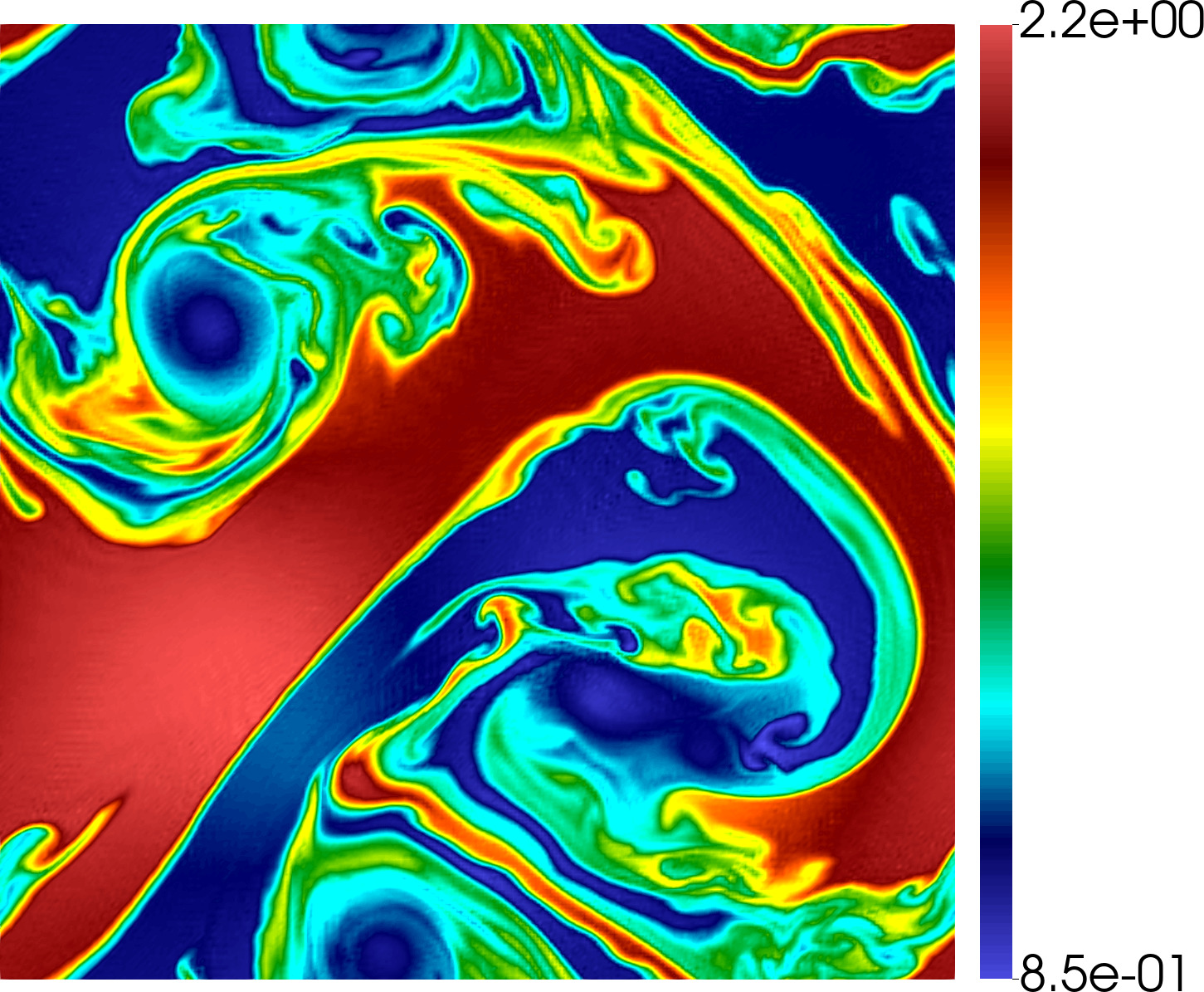}
  \end{subfigure}
  \begin{subfigure}{0.32\textwidth}
    \caption*{$t=6$}
    \includegraphics[width=\textwidth]{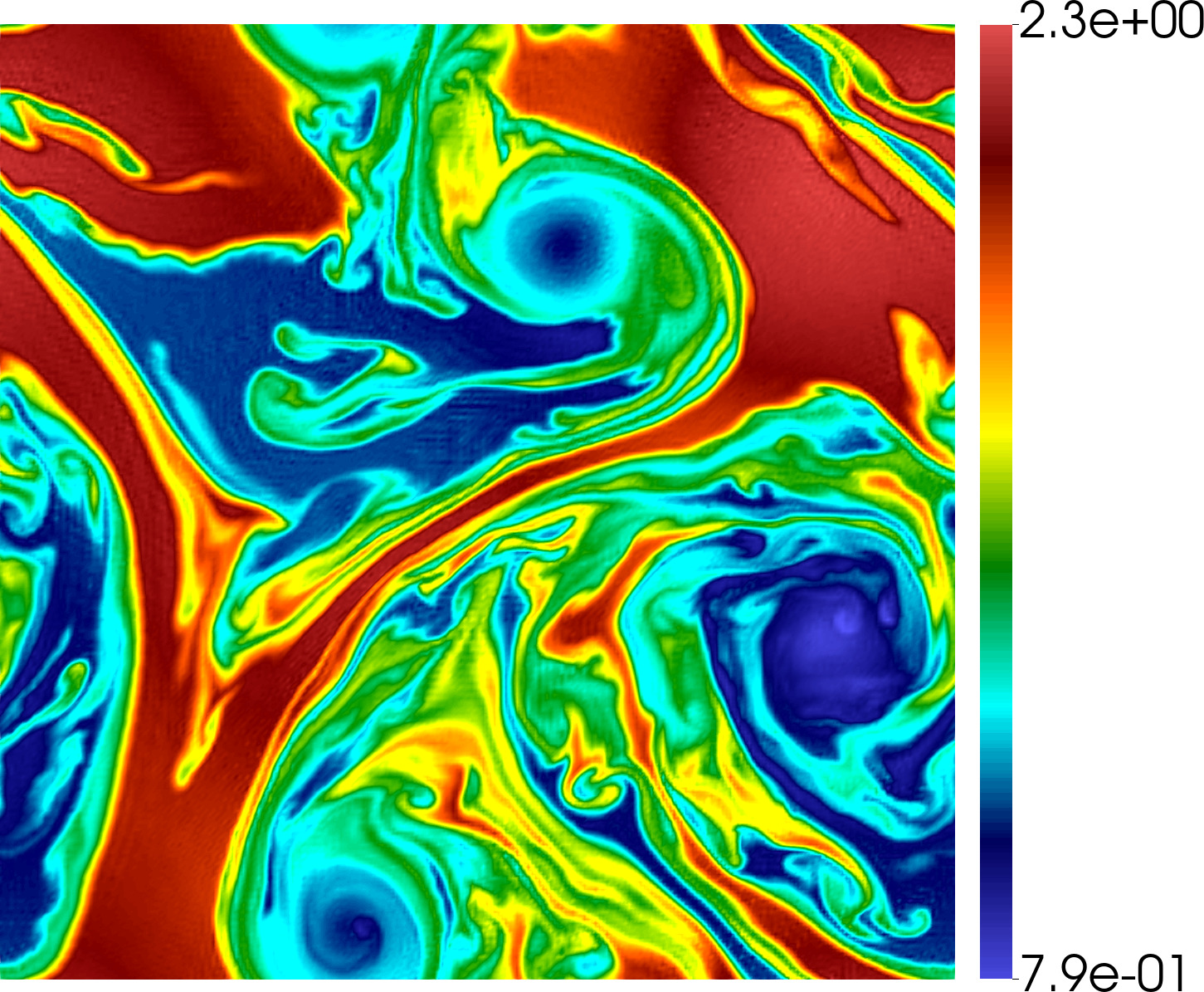}
  \end{subfigure}  
  \\
  \vspace{0.3in}
  $510 \times 510$ $\polP_3$ nodes  
  \\
    \begin{subfigure}{0.32\textwidth}
    \caption*{$t=1$}
    \includegraphics[width=\textwidth]{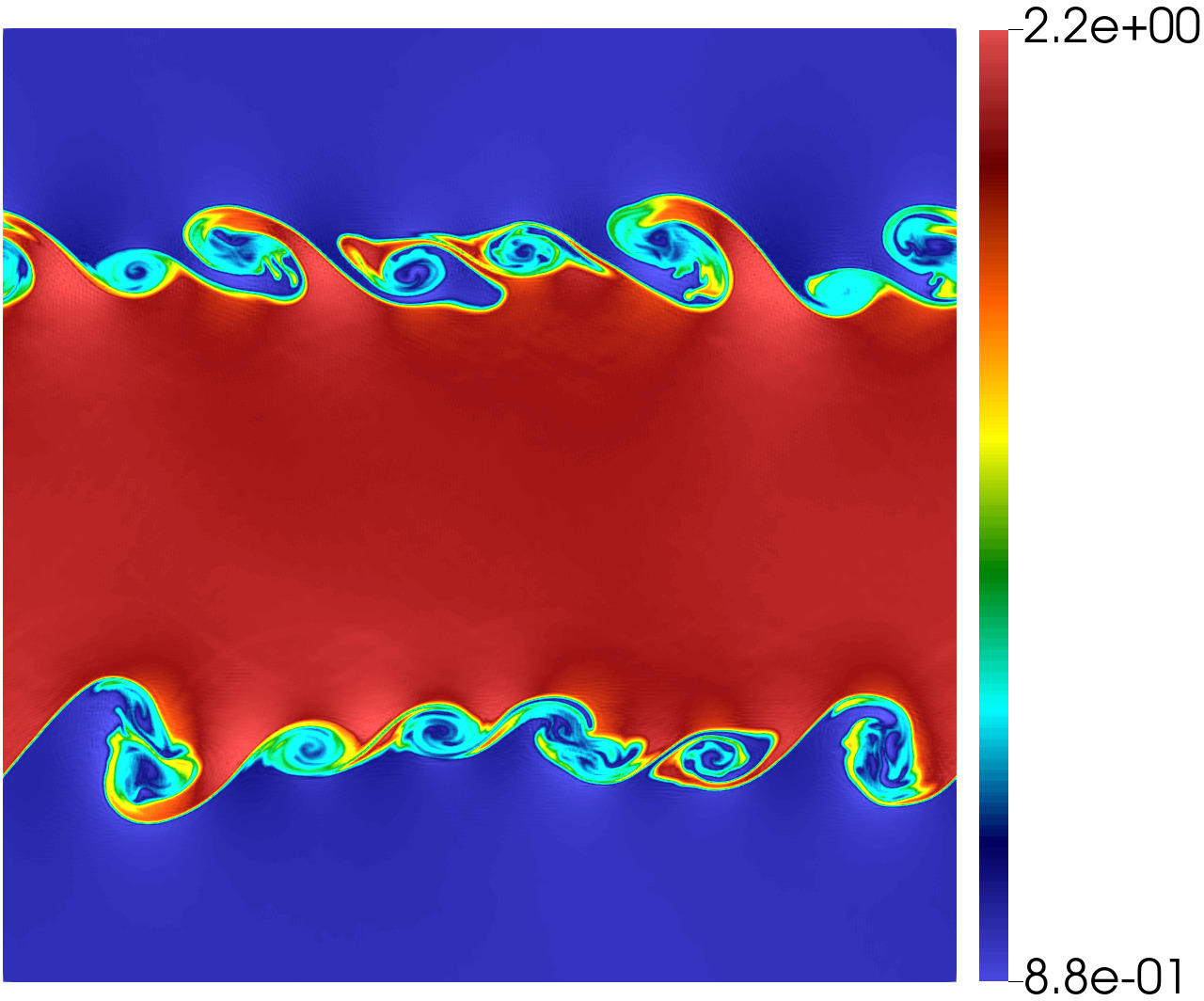}
  \end{subfigure}
  \begin{subfigure}{0.32\textwidth}
    \caption*{$t=2$}
    \includegraphics[width=\textwidth]{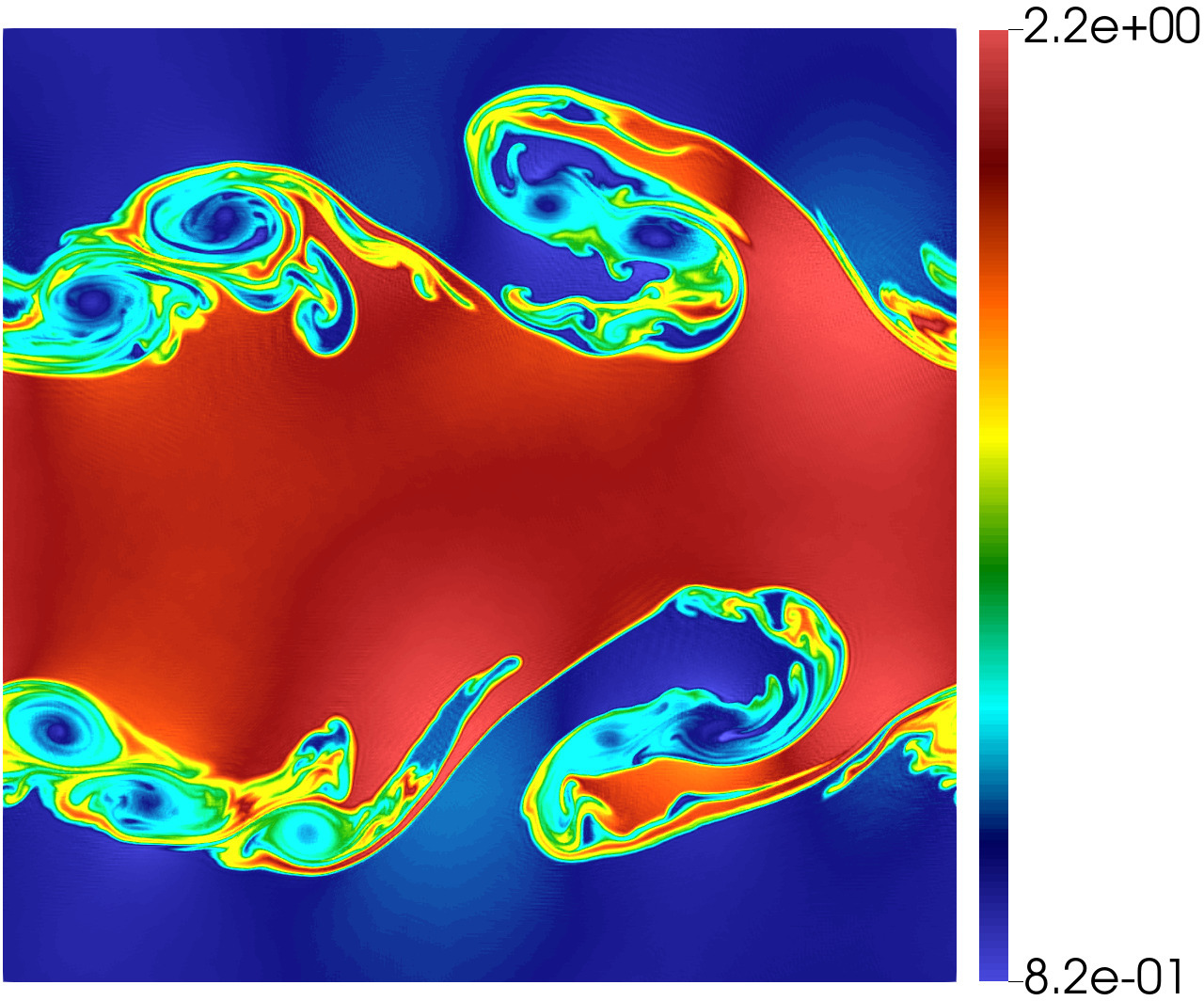}
  \end{subfigure}
  \begin{subfigure}{0.32\textwidth}
    \caption*{$t=3$}
    \includegraphics[width=\textwidth]{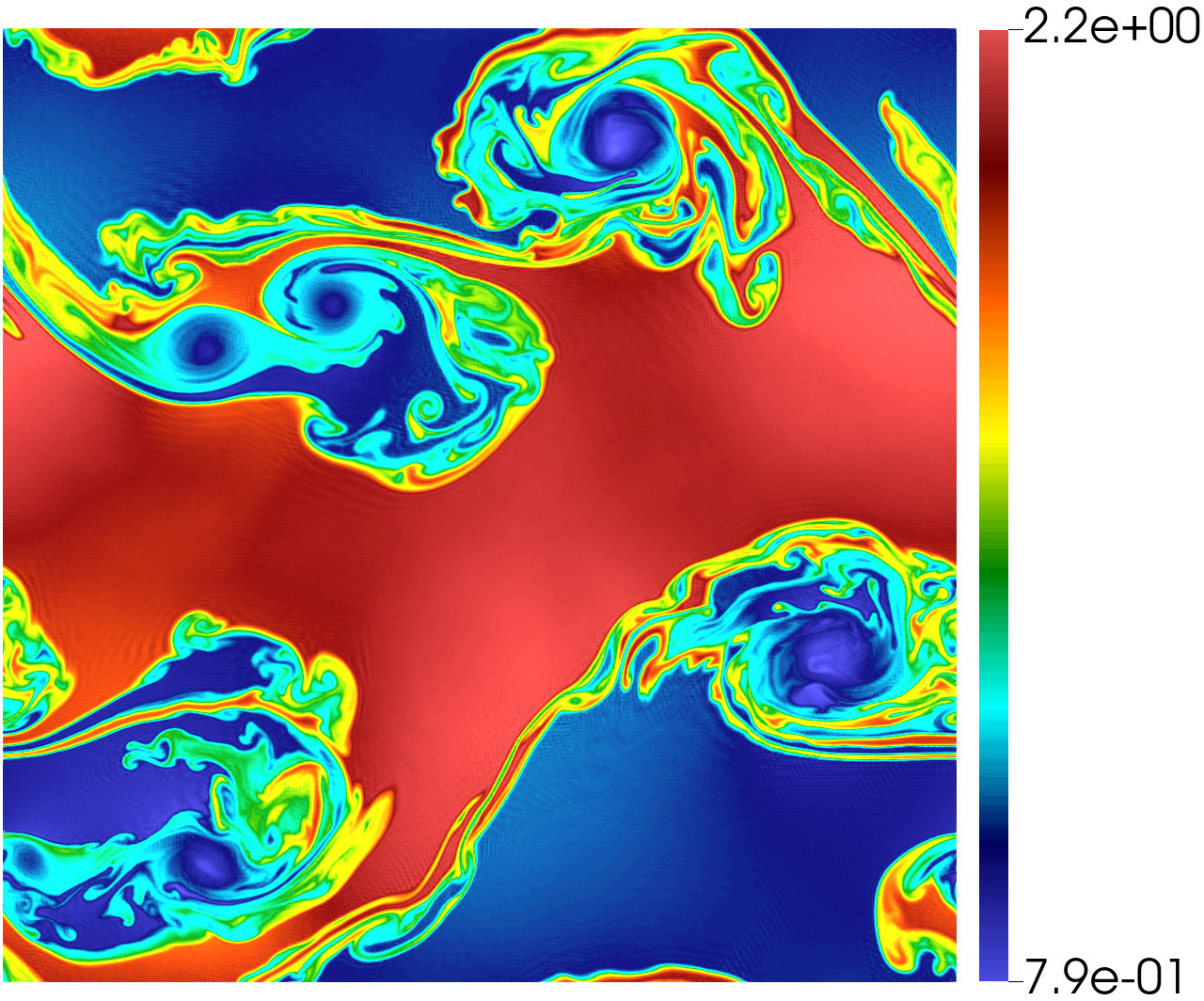}
  \end{subfigure}
  \\
  \vspace{0.05in}
  \begin{subfigure}{0.32\textwidth}
    \caption*{$t=4$}
    \includegraphics[width=\textwidth]{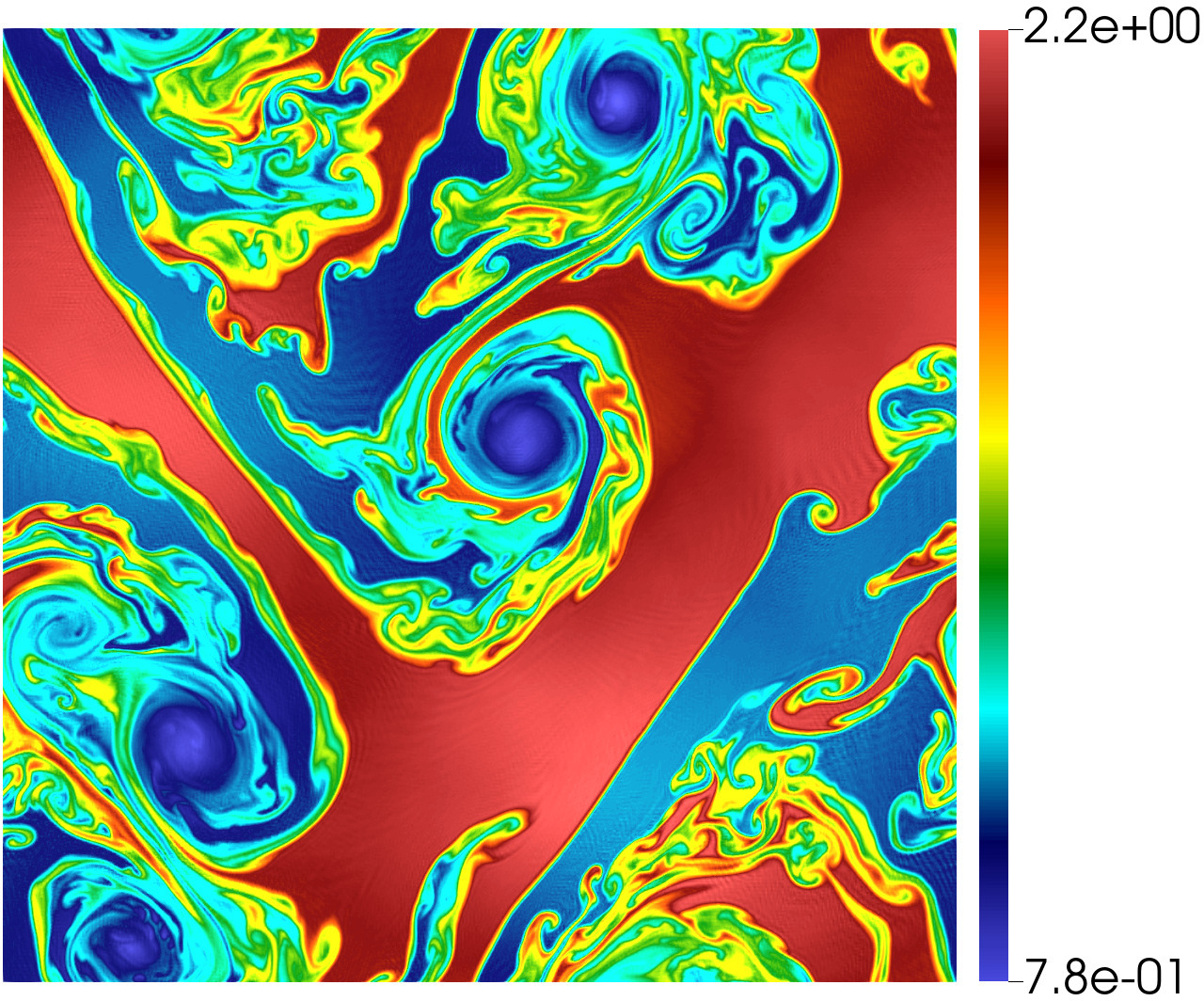}
  \end{subfigure}
  \begin{subfigure}{0.32\textwidth}
    \caption*{$t=5$}
    \includegraphics[width=\textwidth]{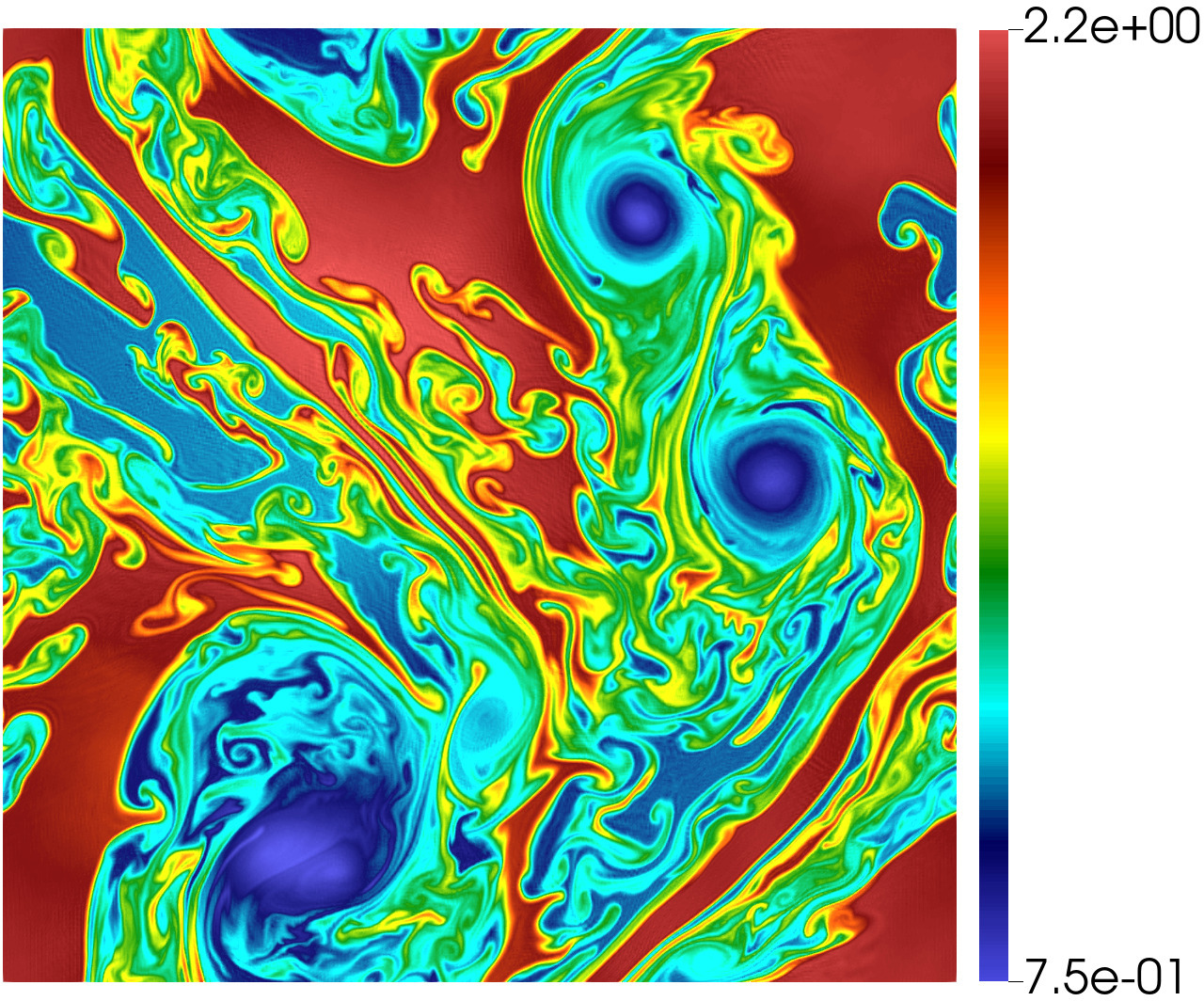}
  \end{subfigure}
  \begin{subfigure}{0.32\textwidth}
    \caption*{$t=6$}
    \includegraphics[width=\textwidth]{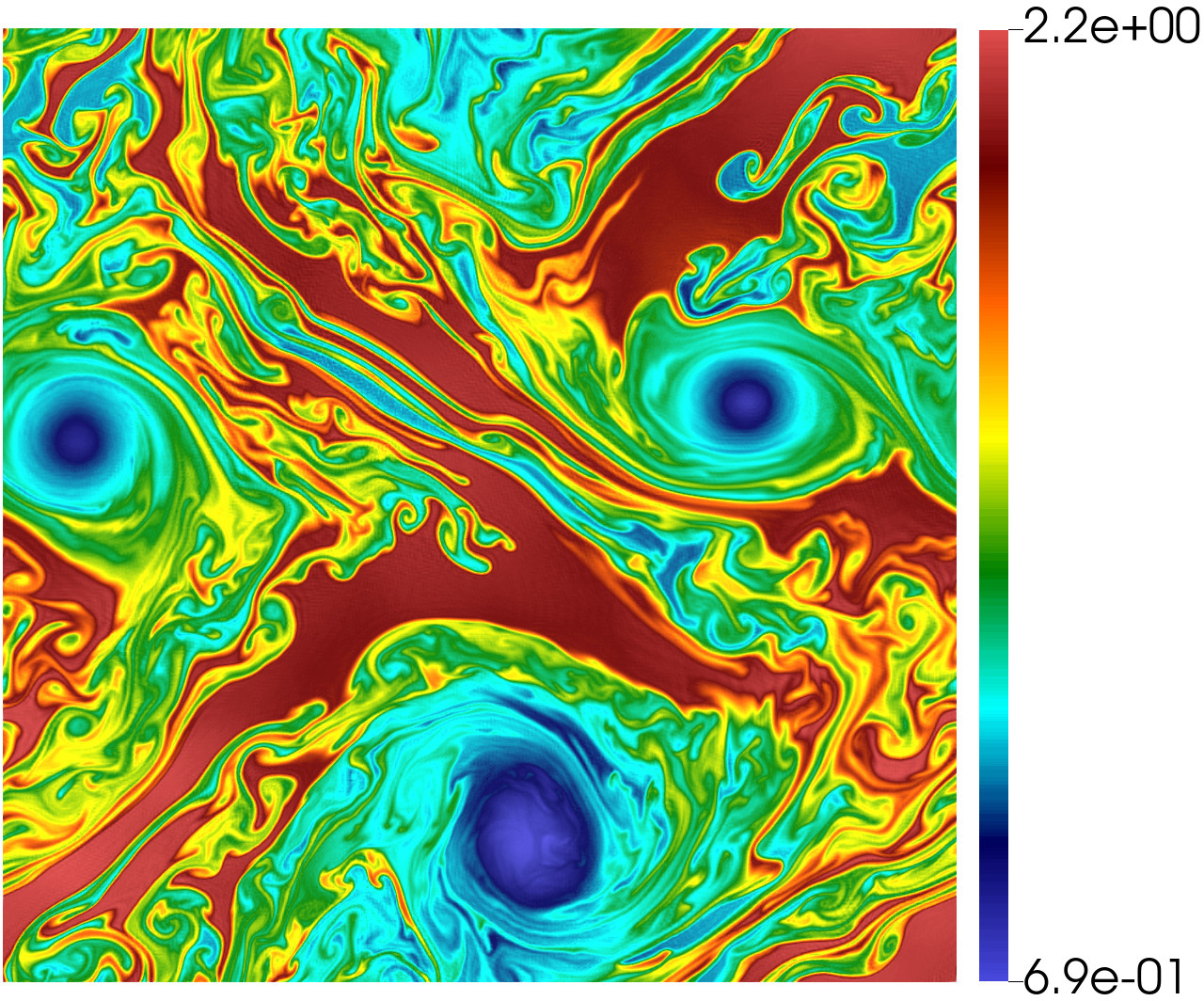}
  \end{subfigure} 
  \caption{\red{Kelvin-Helholtz instabilty. $\polP_3$ solution of fully Hydrodynamic regime, $b_x=0$. The density profile is plotted at different time levels for two mesh resolutions: $85 \times 85$ and  $170 \times 170$ vertices.}}
  \label{fig:KH:b0}
\end{figure}

\begin{figure}[h!]
  \centering
  $255 \times 255$ $\polP_3$ nodes  
  \\
  \begin{subfigure}{0.32\textwidth}
    \caption*{$t=1$}
    \includegraphics[width=\textwidth]{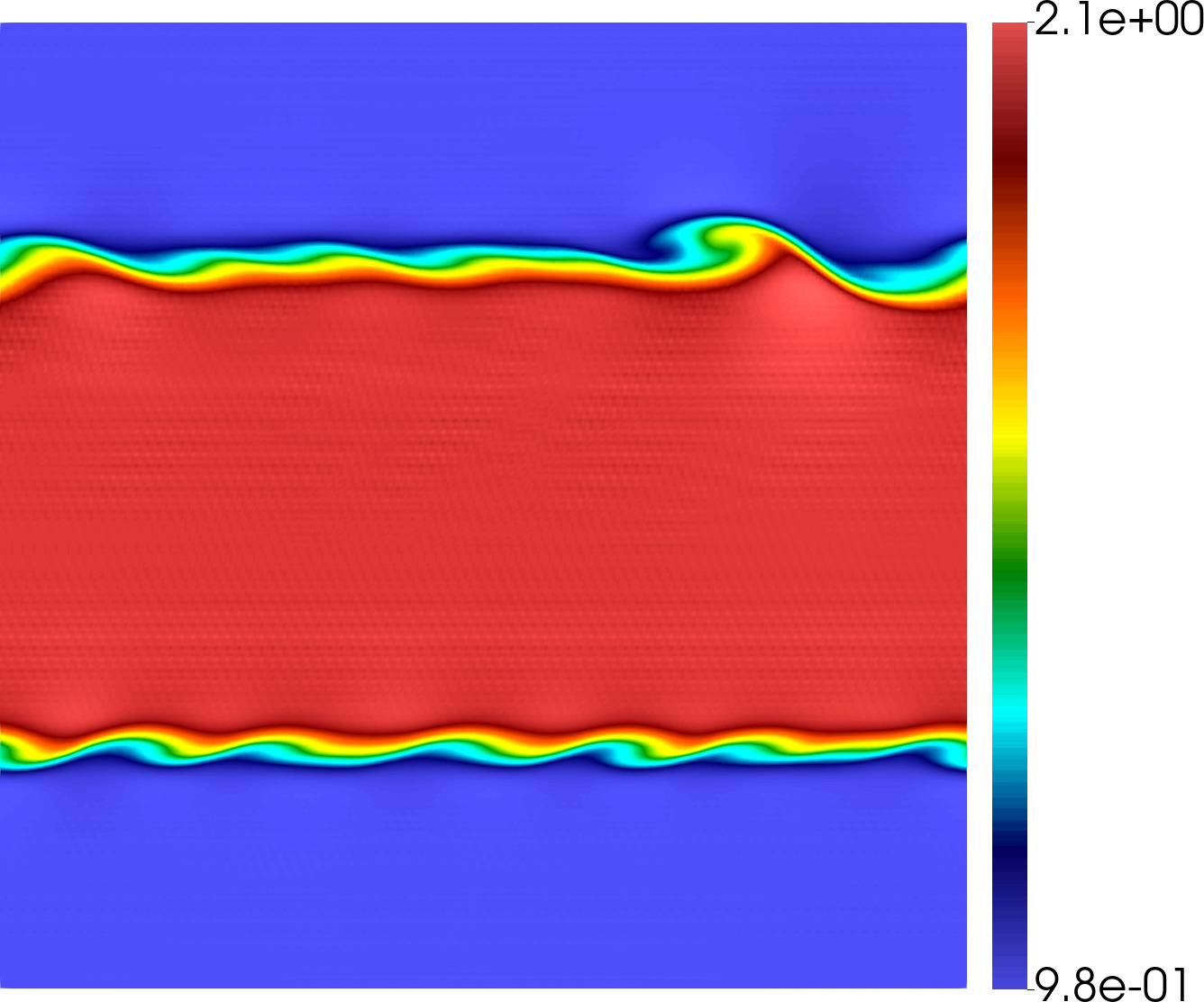}
  \end{subfigure}
  \begin{subfigure}{0.32\textwidth}
    \caption*{$t=2$}
    \includegraphics[width=\textwidth]{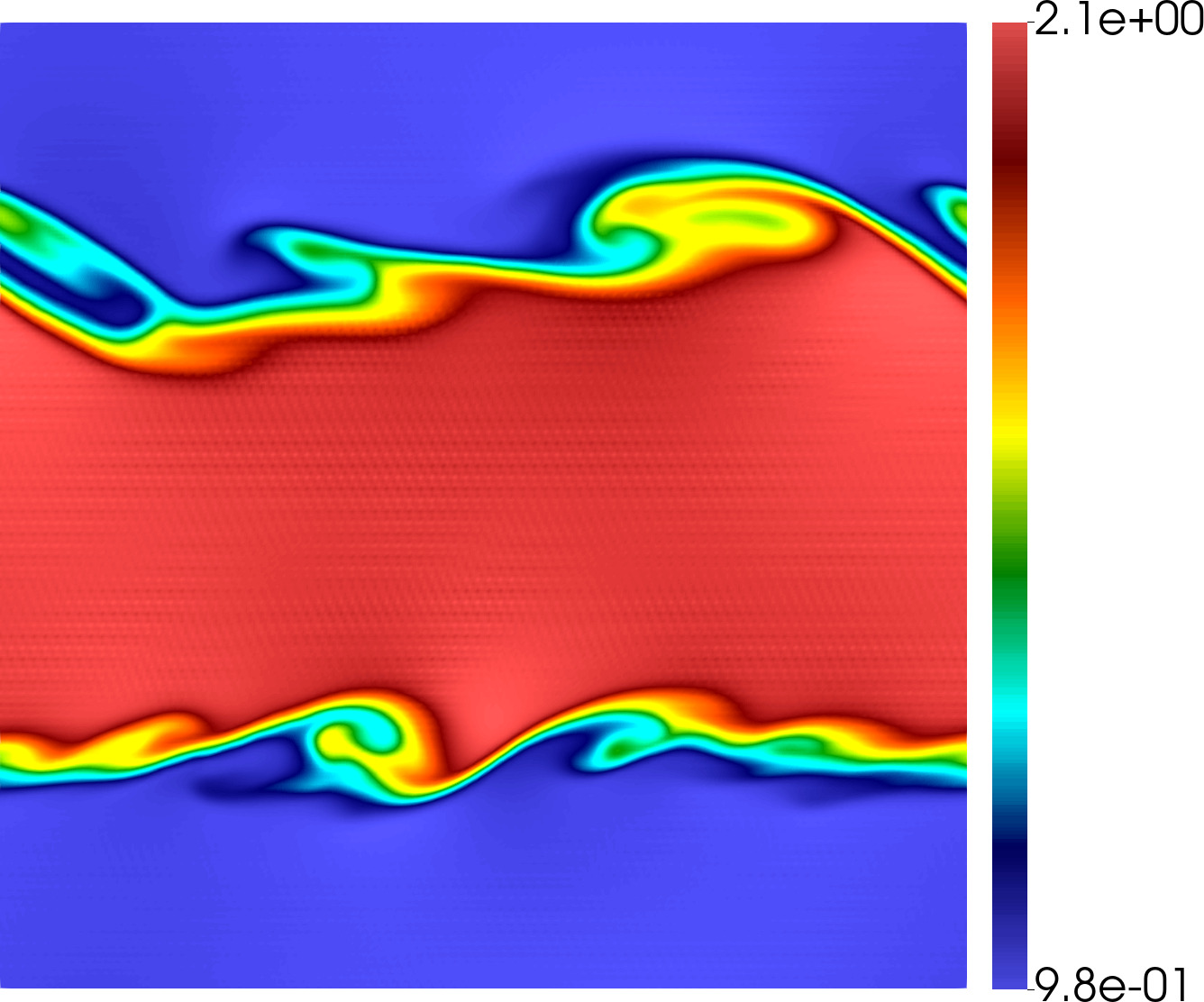}
  \end{subfigure}
  \begin{subfigure}{0.32\textwidth}
    \caption*{$t=3$}
    \includegraphics[width=\textwidth]{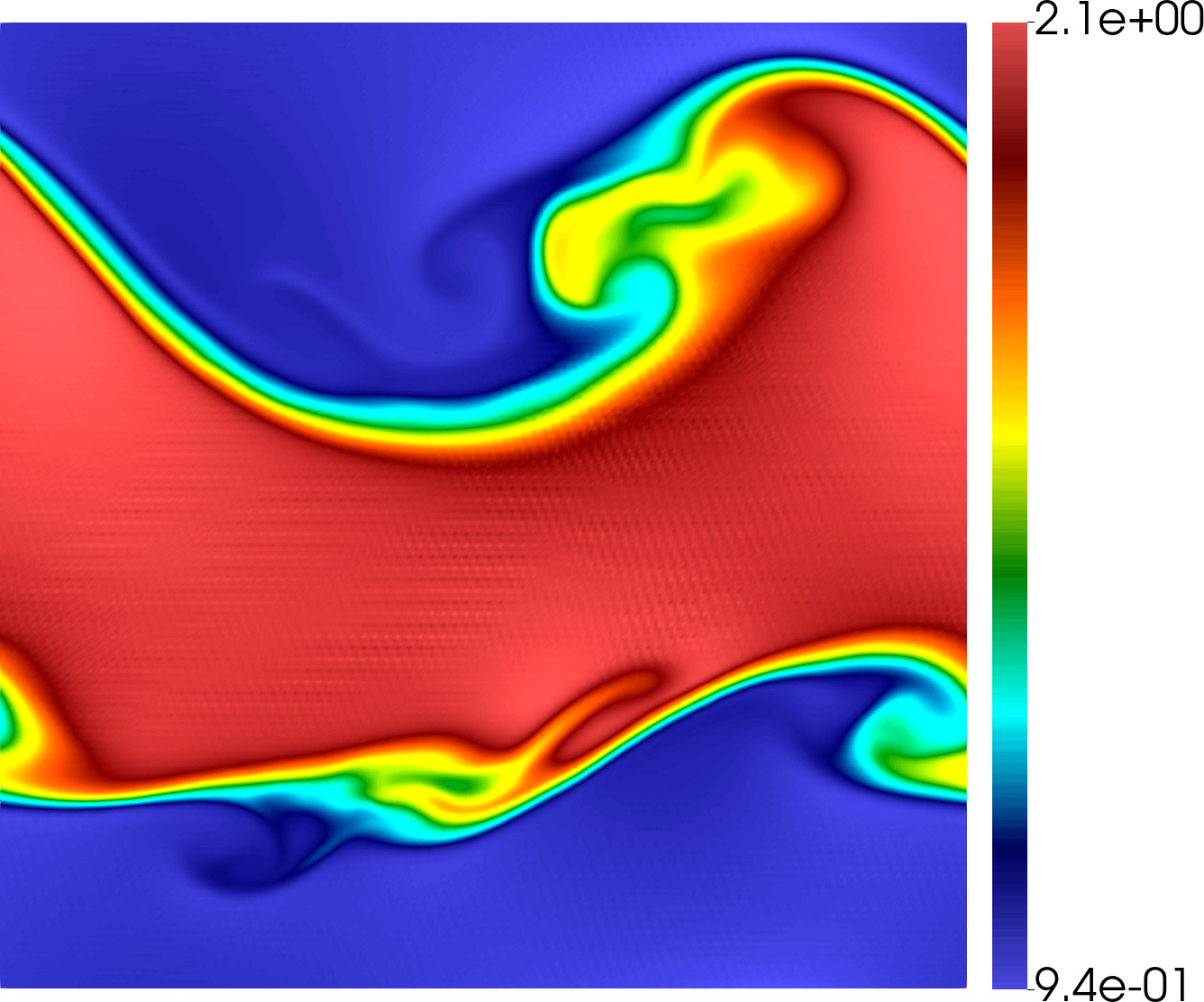}
  \end{subfigure}
  \\
  \vspace{0.05in}
  \begin{subfigure}{0.32\textwidth}
    \caption*{$t=4$}
    \includegraphics[width=\textwidth]{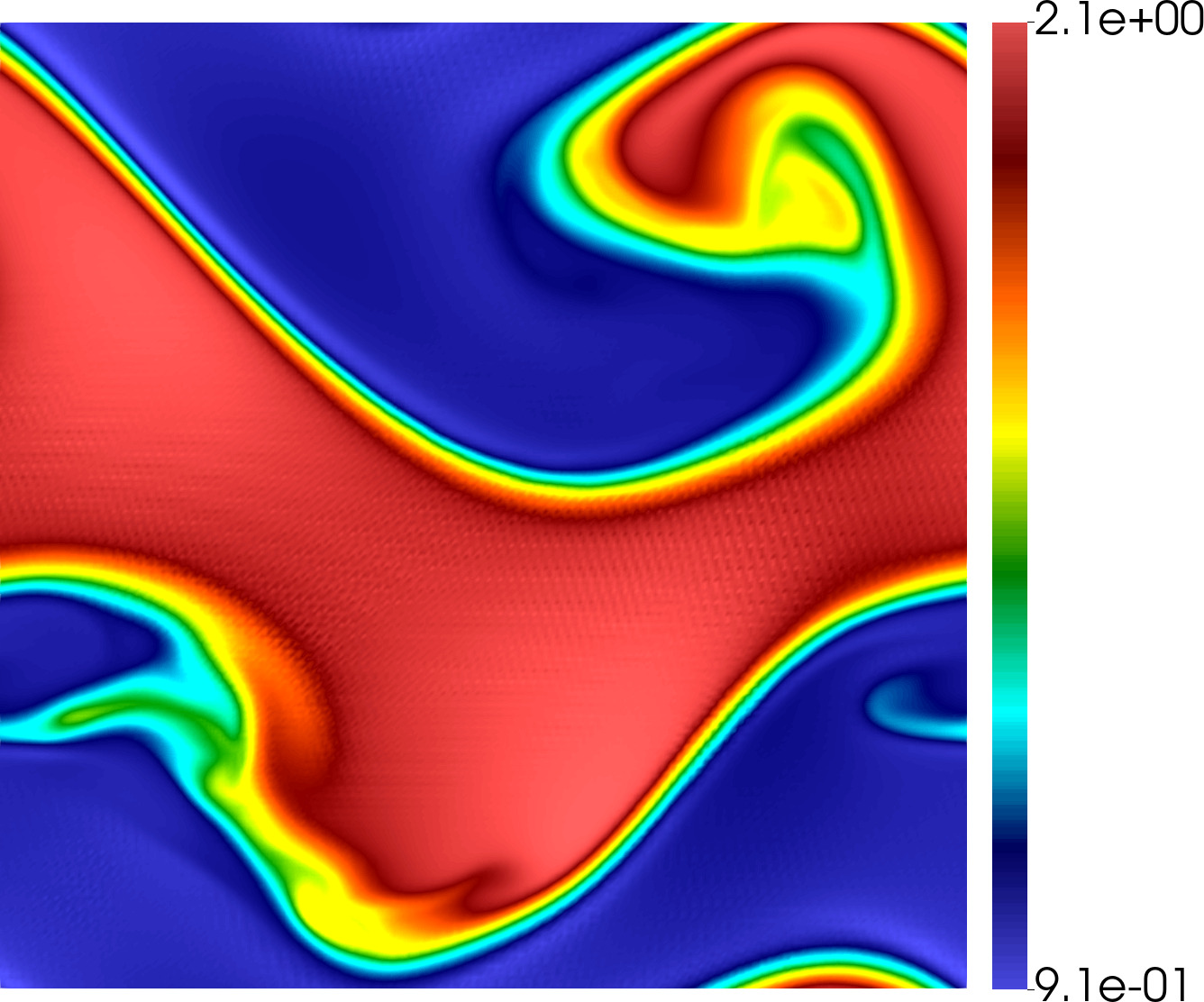}
  \end{subfigure}
  \begin{subfigure}{0.32\textwidth}
    \caption*{$t=5$}
    \includegraphics[width=\textwidth]{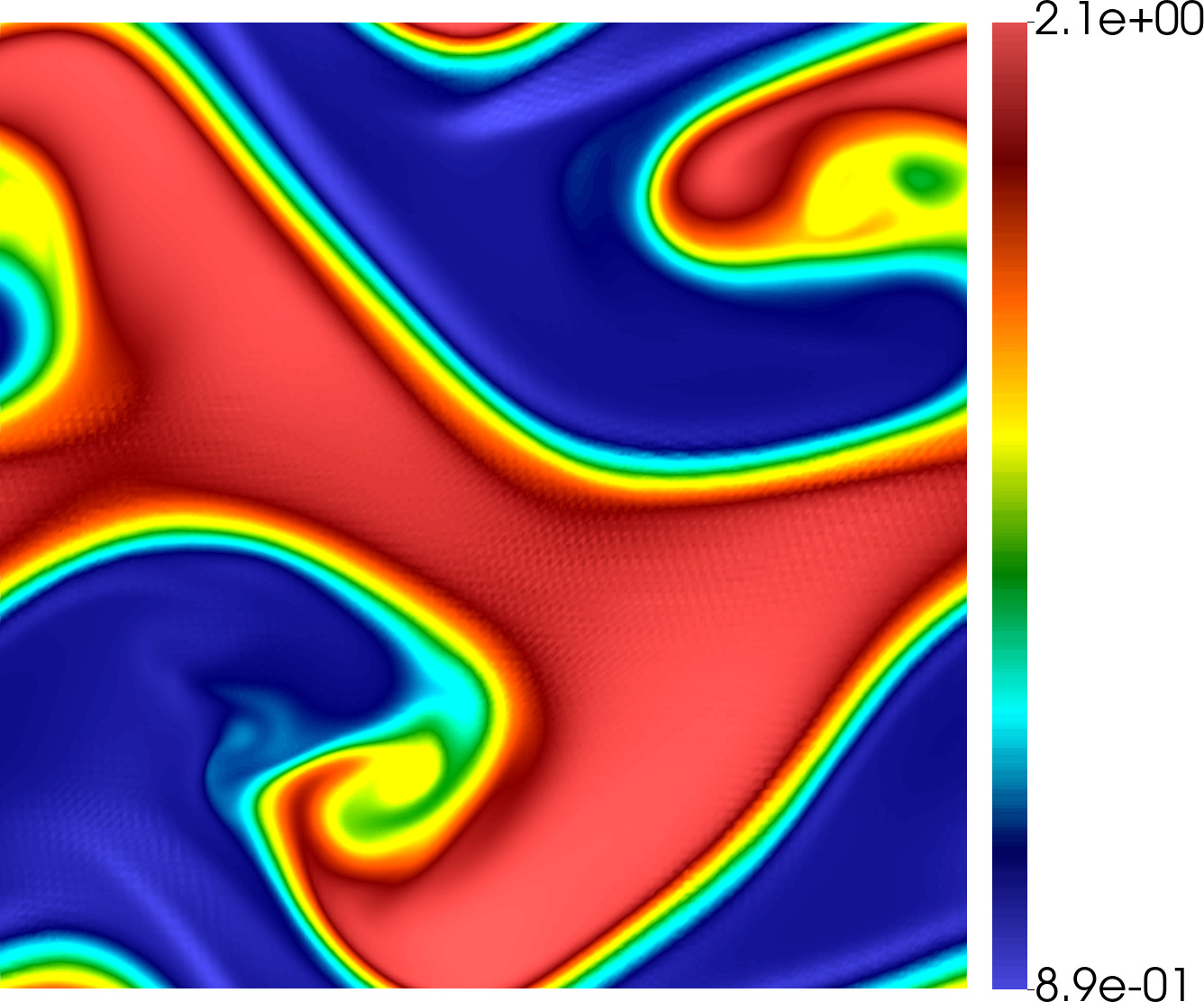}
  \end{subfigure}
  \begin{subfigure}{0.32\textwidth}
    \caption*{$t=6$}
    \includegraphics[width=\textwidth]{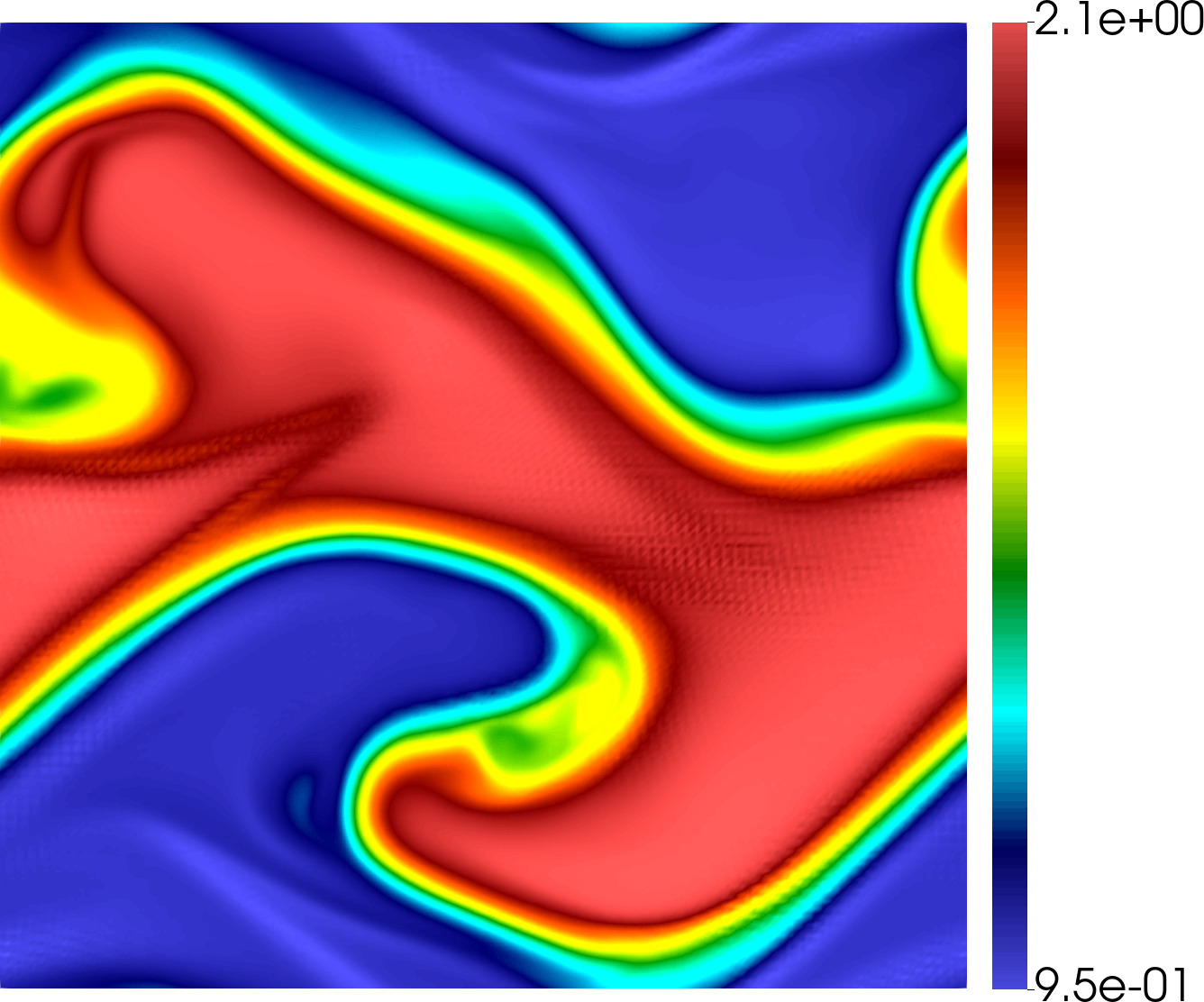}
  \end{subfigure}
  \\
  \vspace{0.3in}
  $510 \times 510$ $\polP_3$ nodes  
  \\
    \begin{subfigure}{0.32\textwidth}
    \caption*{$t=1$}
    \includegraphics[width=\textwidth]{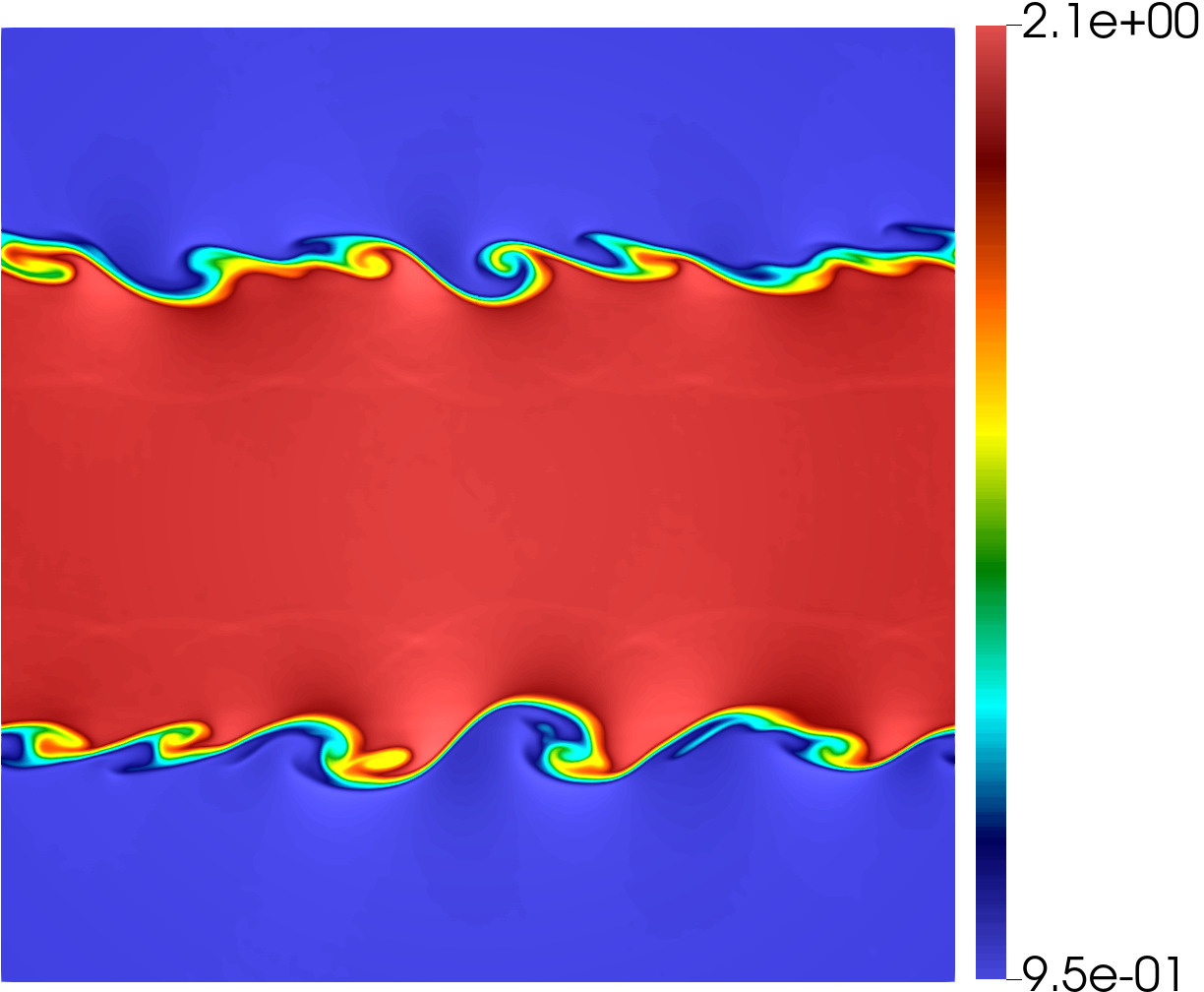}
  \end{subfigure}
  \begin{subfigure}{0.32\textwidth}
    \caption*{$t=2$}
    \includegraphics[width=\textwidth]{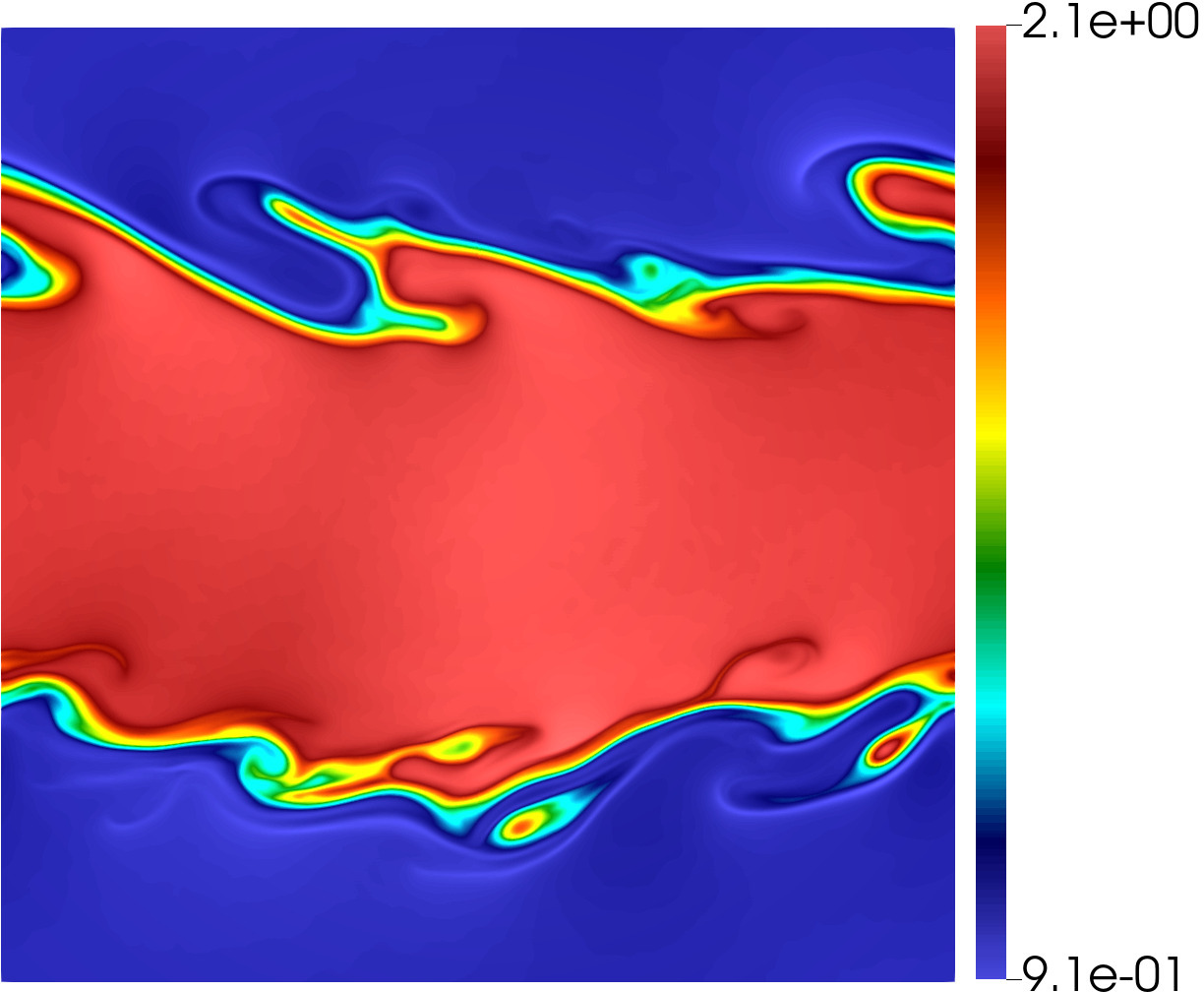}
  \end{subfigure}
  \begin{subfigure}{0.32\textwidth}
    \caption*{$t=3$}
    \includegraphics[width=\textwidth]{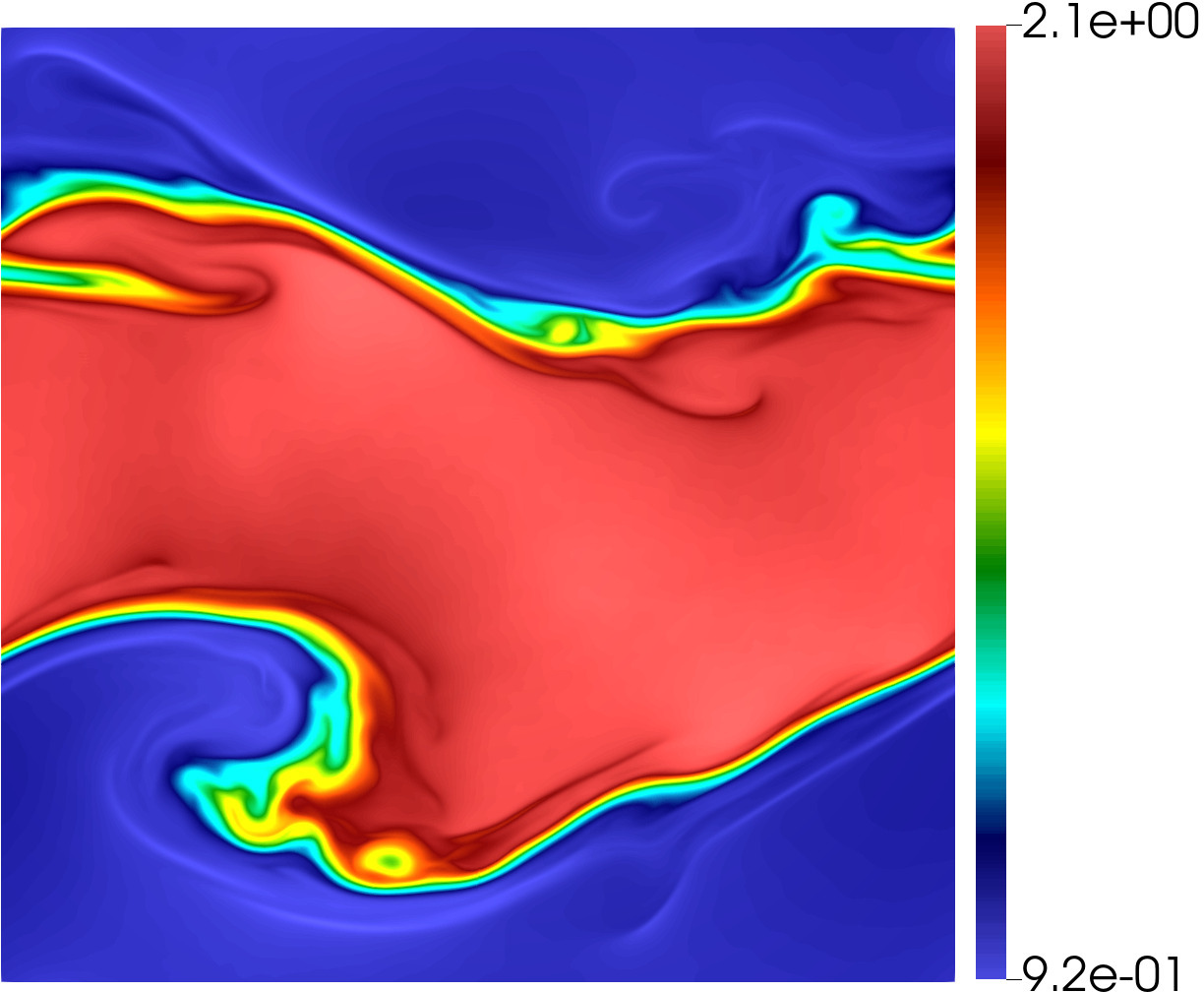}
  \end{subfigure}
  \\
  \vspace{0.05in}
  \begin{subfigure}{0.32\textwidth}
    \caption*{$t=4$}
    \includegraphics[width=\textwidth]{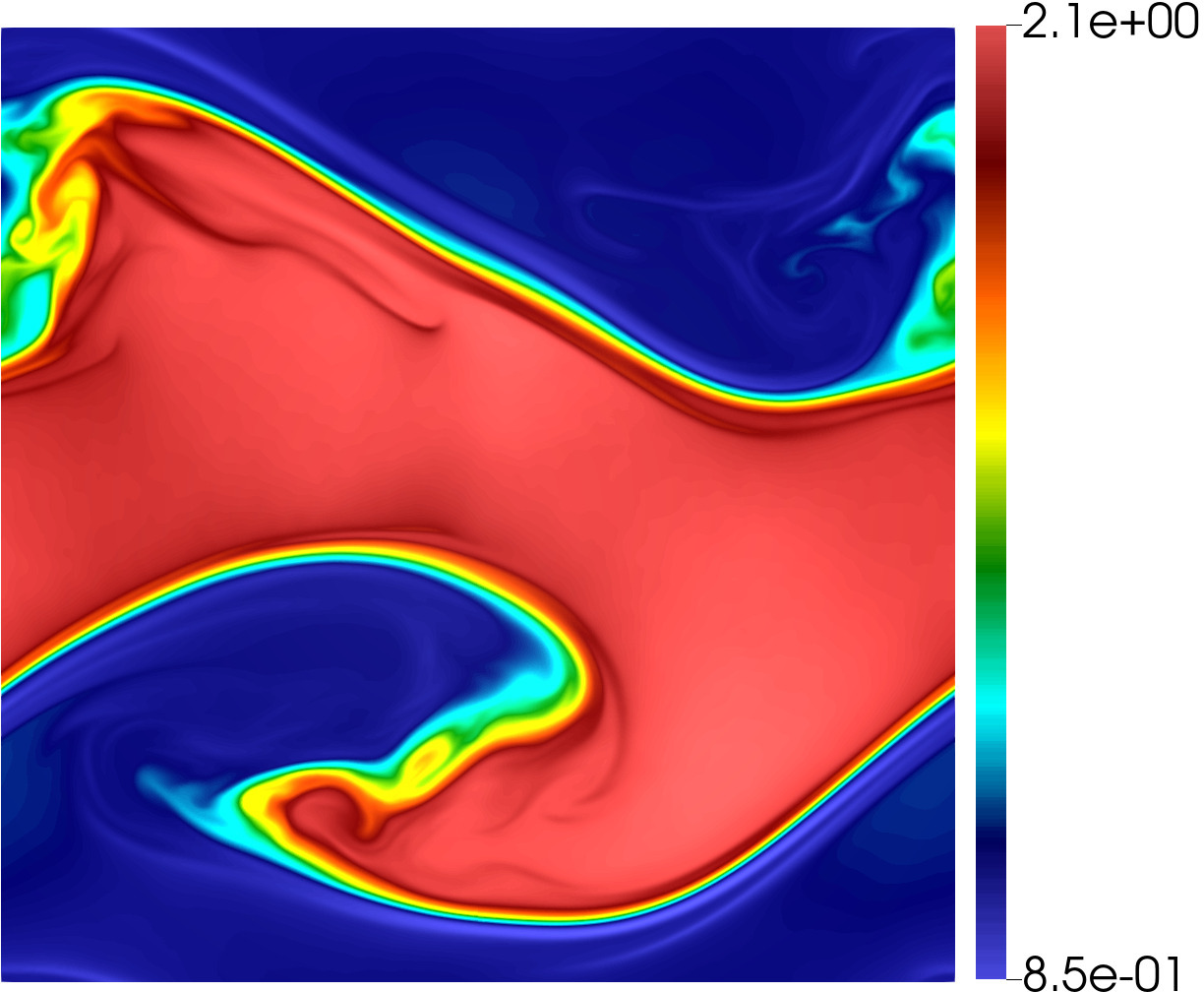}
  \end{subfigure}
  \begin{subfigure}{0.32\textwidth}
    \caption*{$t=5$}
    \includegraphics[width=\textwidth]{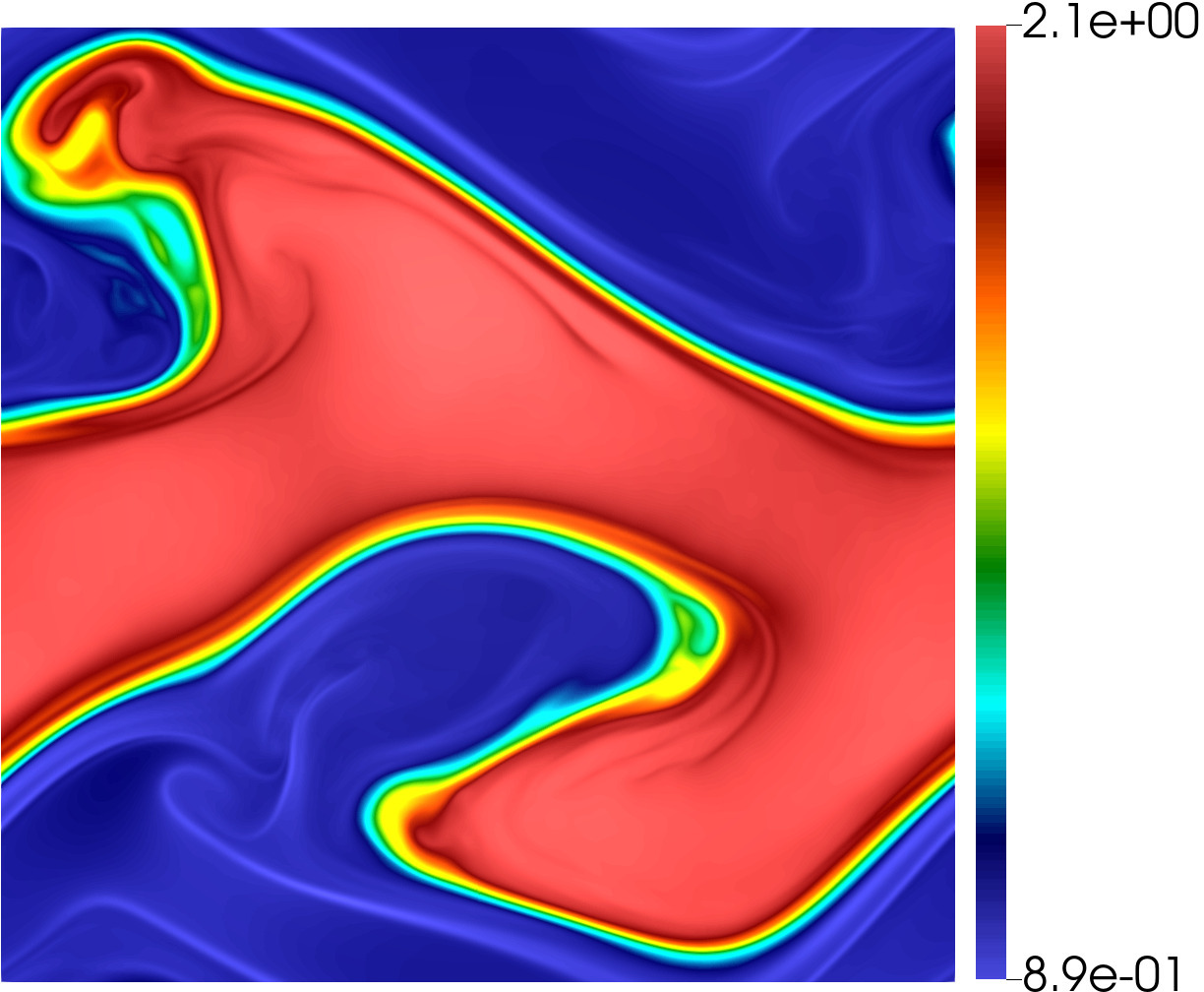}
  \end{subfigure}
  \begin{subfigure}{0.32\textwidth}
    \caption*{$t=6$}
    \includegraphics[width=\textwidth]{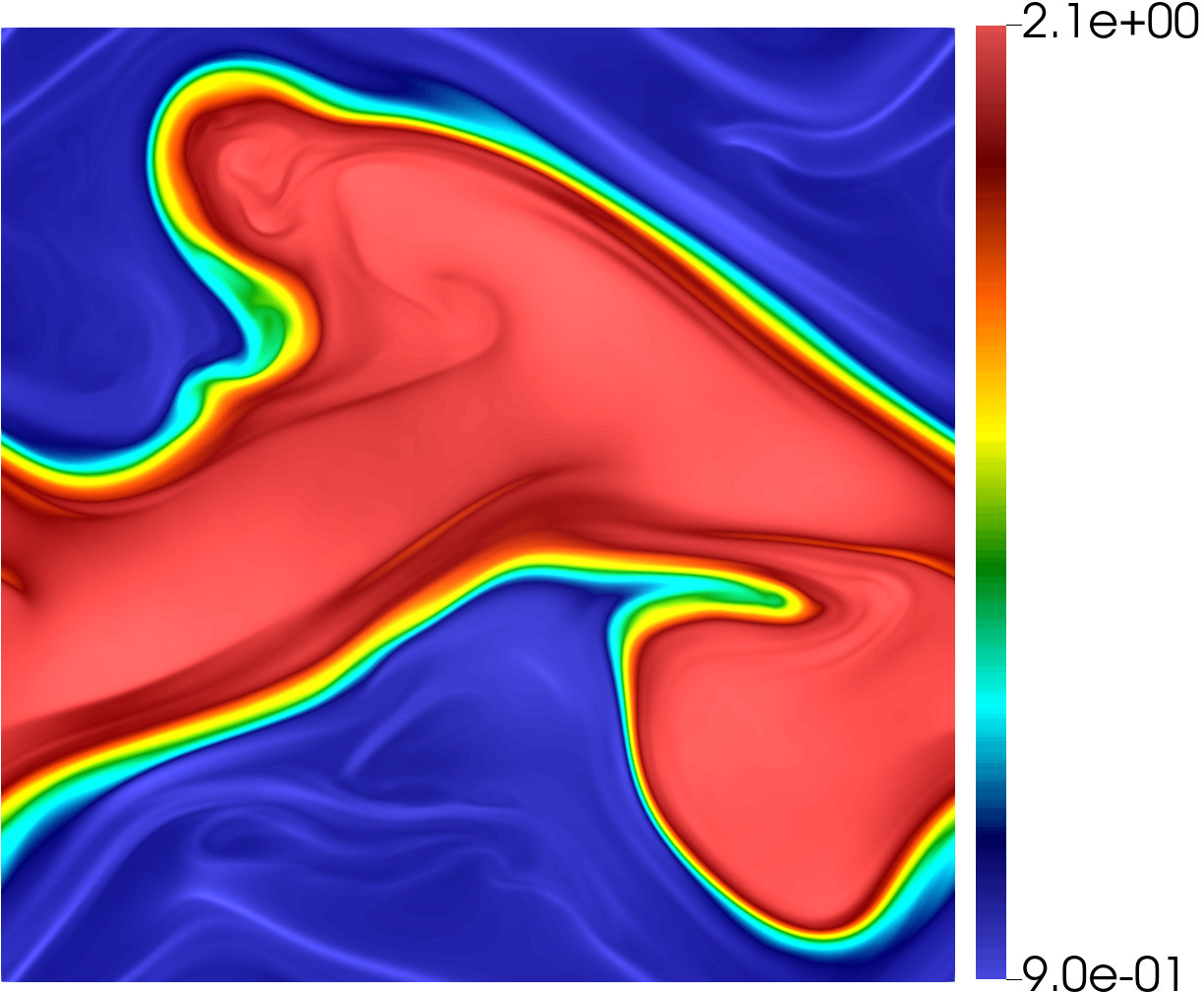}
  \end{subfigure}
  \caption{\red{Kelvin-Helholtz instability. $\polP_3$ solution of fully Magnetohydrodynamic regime, $b_x=0.2$. The density profile is plotted at different time levels for two mesh resolutions: $85 \times 85$ and  $170 \times 170$ vertices.}}
  \label{fig:KH:b02}
\end{figure}

}

\subsection{MHD Blast, \citep{Balsara_et_al_1999}}
The MHD Blast problem is a challenging benchmark because the solvers can easily crash due to negativity of pressure. The domain is a square $\Omega = [-0.5, 0.5] \times [-0.5, 0.5]$. The ambient solution is
\[
  (\rho_0, \bu_0, p_0, \bB_0) = \left(1, (0, 0), 0.1, \l(\frac{100}{\sqrt{4\pi}}, 0\r)\right).
\]
For $\bx\in B((0,0)^\top,R)$ the circle centered at origin with radius $R = 1$, the solution is initialized with a sharp jump in the pressure $p=1000$ which is 10,000 times bigger than the ambient pressure. The gas constant is $\gamma=1.4$.
\red{
The periodic boundary condition is used on all boundaries. We use fourth-order classical Runge-Kutta method with $\textrm{CFL}=0.2$ in time.

We solve the problem using two polynomial spaces: $\polP_1$ on 201,117 nodes and $\polP_3$ on 201,627 nodes, see Figures~\ref{fig:Blast:P1} and \ref{fig:Blast:P3}. Since the initial condition is interpolated into finite element space, a significant jump in pressure in the initial data leads to overshoots and undershoots on the $\polP_3$ solution at $t=0$. These overshoots and undershoots propagate over time, and small-scale oscillations can be observed near the expansion waves, see at the density profile of Figure~\ref{fig:Blast:P3}. For $\polP_1$, we observe less noise in the expansion region since the interpolated initial condition does not produce any overshoots or undershoots. However, the shocks are captured accurately and we observe more structures in the $\polP_3$ solution compared to the corresponding $\polP_1$ solution, especially at the center of the explosion. We plot $\bB_{h,x}$ along the lines $x=0$ and $y=0$ in Figure~\ref{fig:Blast:slices}. One can see that the $\polP_3$ solution is more accurate for the same degrees of freedom. 

Another important observation in this test is the magnitude of the artificial viscosity for both polynomial spaces. Since the degrees of freedom are the same, the values of the viscosity coefficients at the impact sections are relatively similar. Since the $\polP_3$ solution exhibits small-scale oscillations, a small viscosity is added in the expansion region. Note that the residual normalization discussed in section~\ref{sec:rv} is robust when adding enough viscosity to solve this difficult problem for both polynomial spaces.

\begin{figure}[h!]
  \centering
  \begin{subfigure}{0.4\textwidth}
    \centering
    \includegraphics[width=\textwidth]{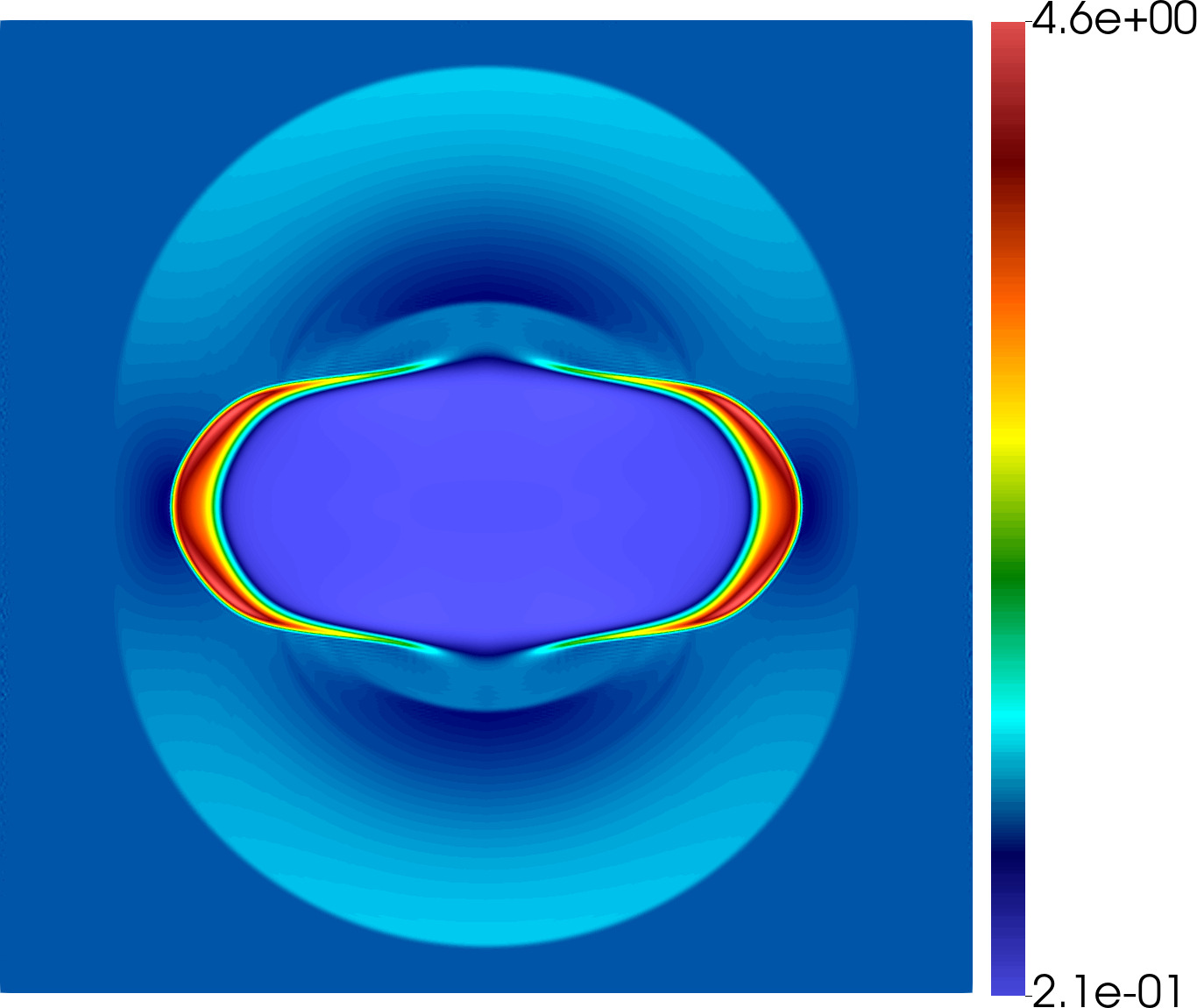}
    \caption*{Density $\rho_h$}
  \end{subfigure}
  \hspace{0.2in}
  \begin{subfigure}{0.4\textwidth}
    \centering
    \includegraphics[width=\textwidth]{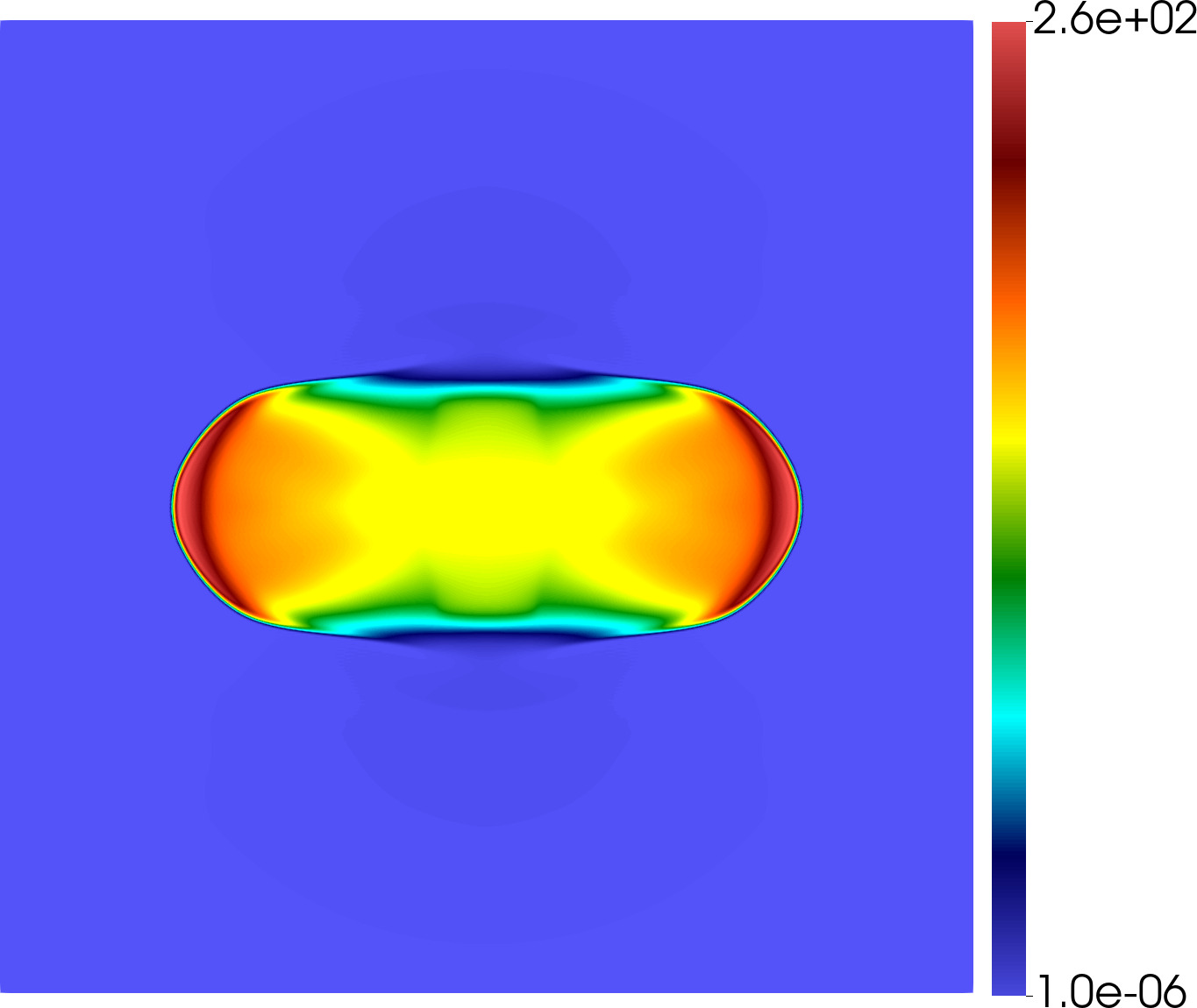}
    \caption*{Hydrodynamic pressure $P_h$}
  \end{subfigure}
  \\
  \vspace{0.1in}
  \begin{subfigure}{0.4\textwidth}
    \centering
    \includegraphics[width=\textwidth]{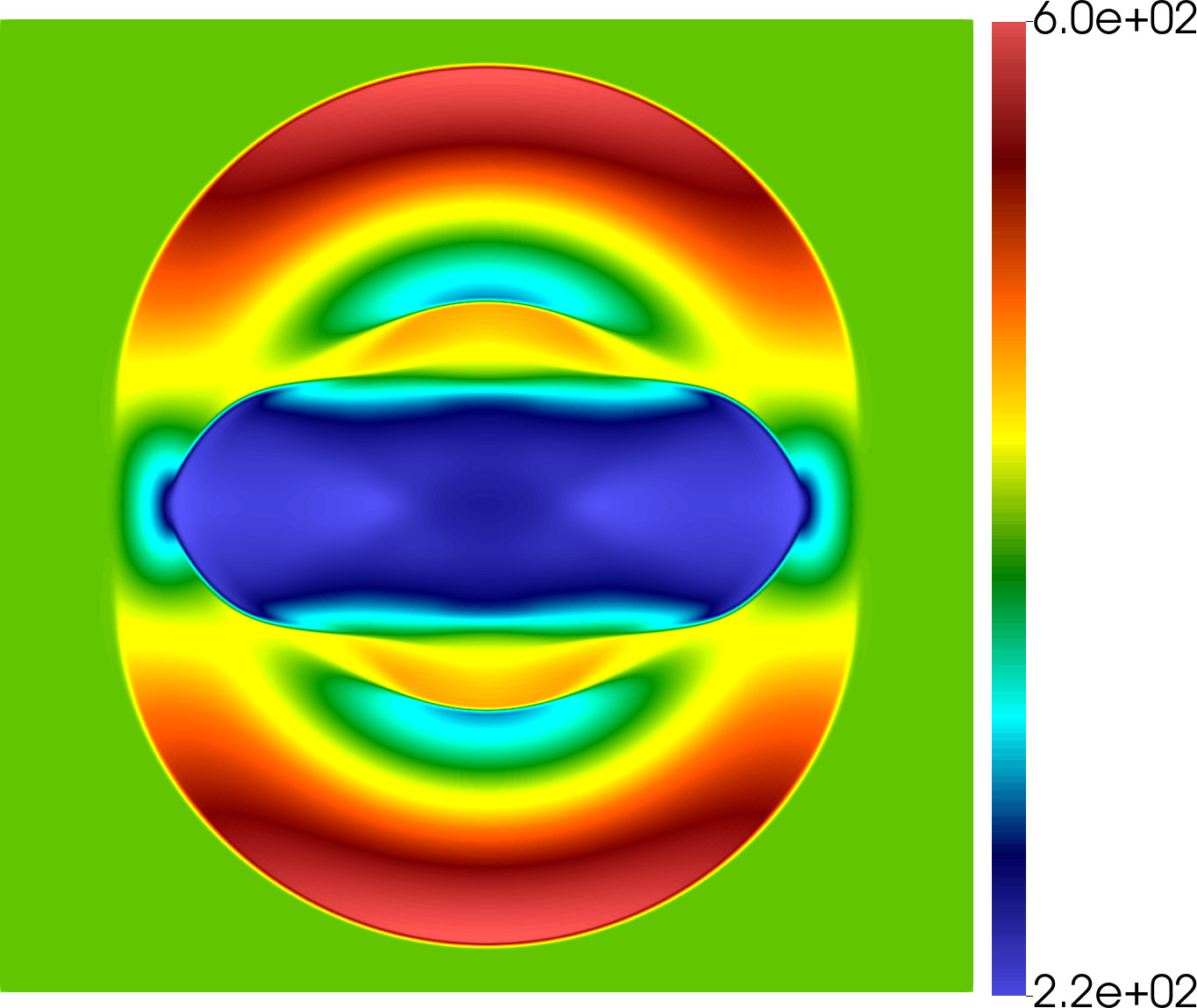}
    \caption*{Magnetic pressure $|\bB_h|^2/2$}
  \end{subfigure}
  \hspace{0.2in}
  \begin{subfigure}{0.4\textwidth}
    \centering
    \includegraphics[width=\textwidth]{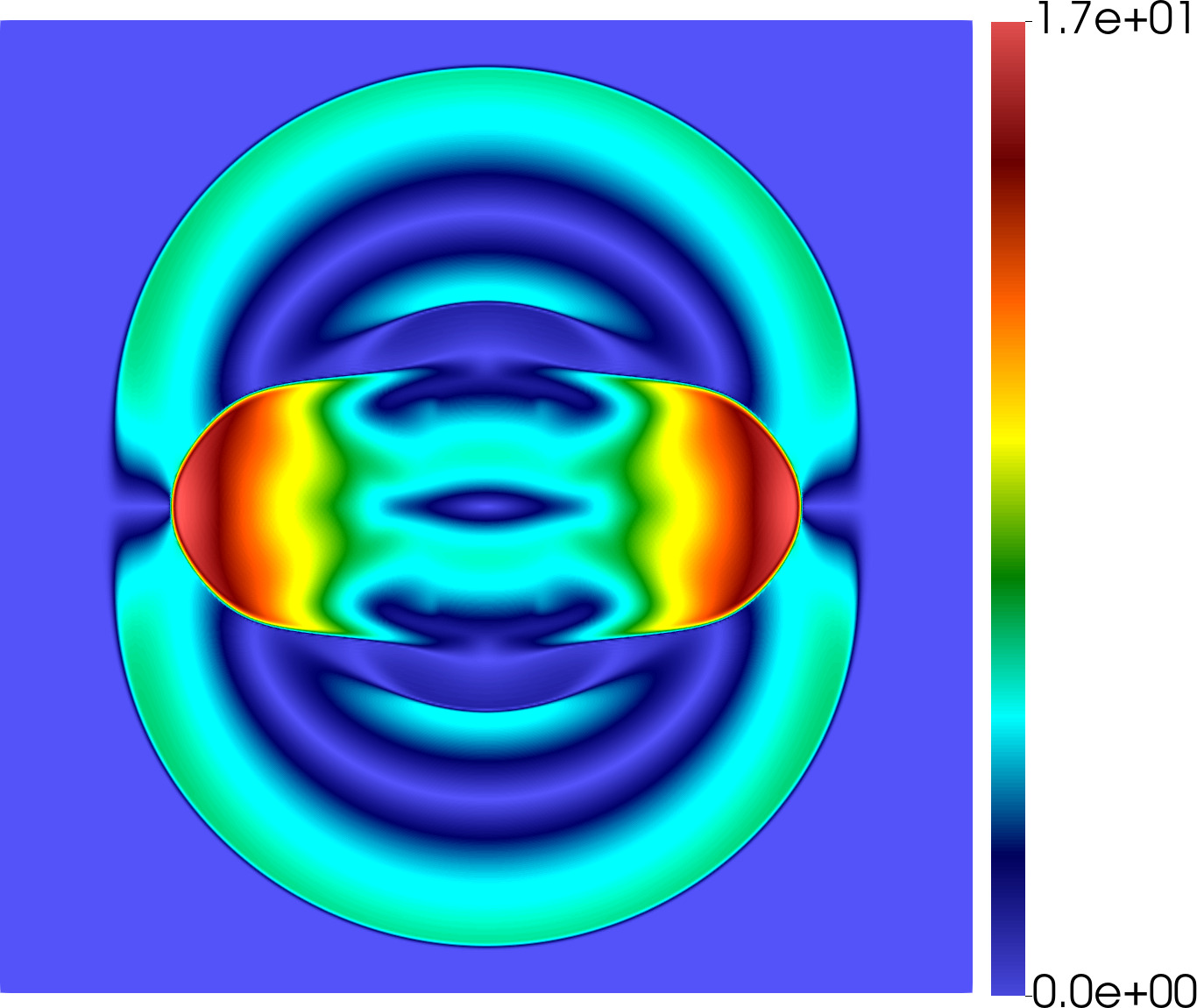}
    \caption*{Velocity magnitude $|\bu_h|$}
  \end{subfigure}
  \caption{Solution to Blast problem at time $t=0.01$ on 240359 $\polP_1$ nodes.}
  \label{fig:Blast:P1}
\end{figure}

\begin{figure}[h!]
  \centering
  \begin{subfigure}{0.4\textwidth}
    \centering
    \caption*{$\bB_{h,x}$ on $x=0$}
    \includegraphics[width=\textwidth]{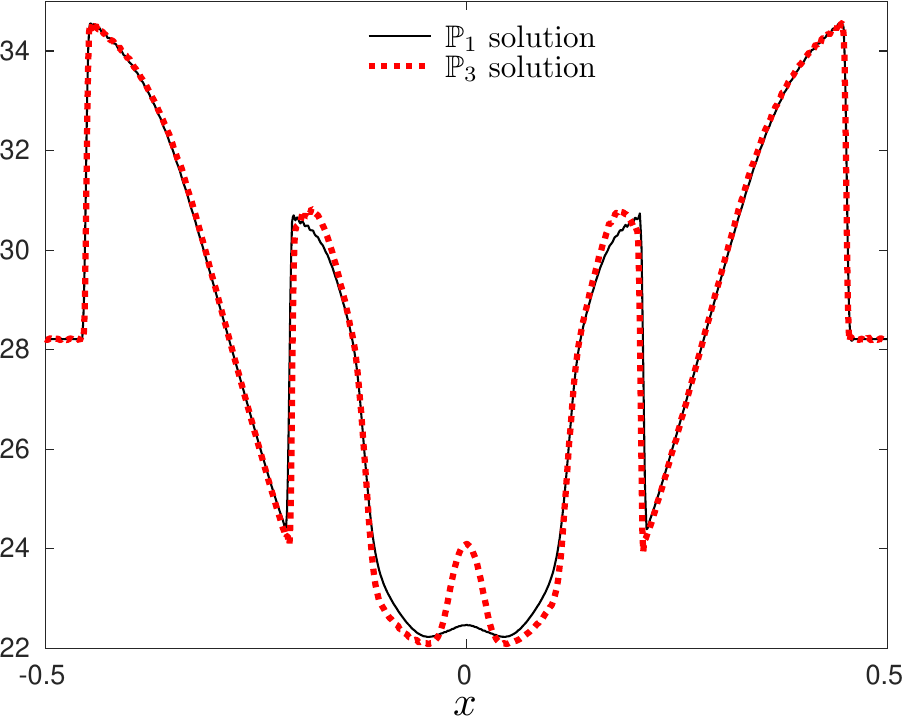}
  \end{subfigure}
  \hspace{0.1in}
  \begin{subfigure}{0.4\textwidth}
    \centering
    \caption*{$\bB_{h,x}$ on $y=0$}
    \includegraphics[width=\textwidth]{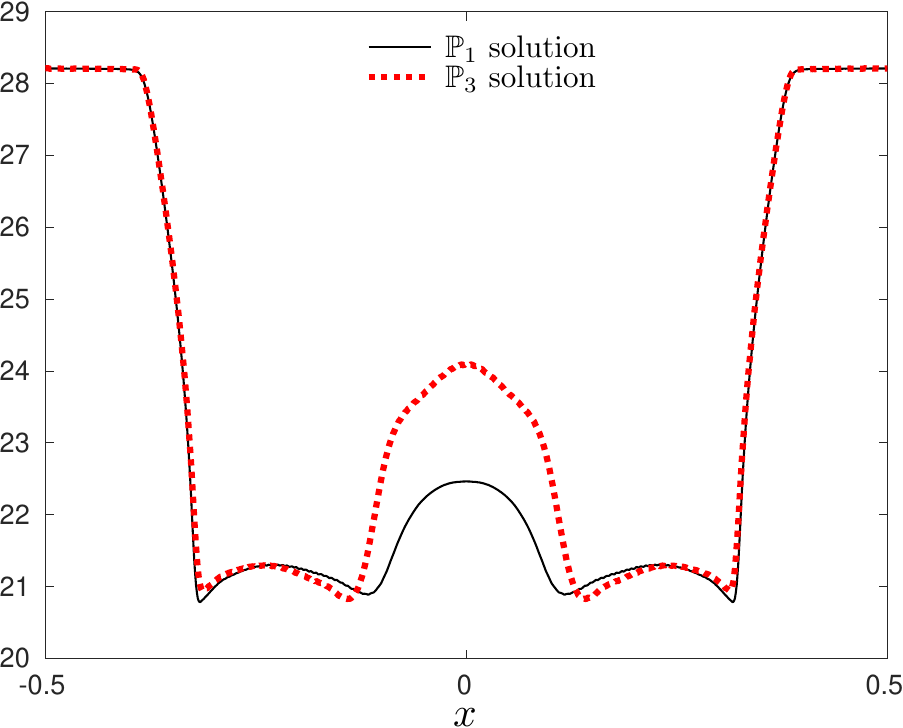}
  \end{subfigure}
  \caption{Slices of the numerical solution to the Blast problem at the final time $t=0.01$.}
  \label{fig:Blast:slices}
\end{figure}

\begin{figure}[h!]
  \centering
  \begin{subfigure}{0.4\textwidth}
    \centering
    \includegraphics[width=\textwidth]{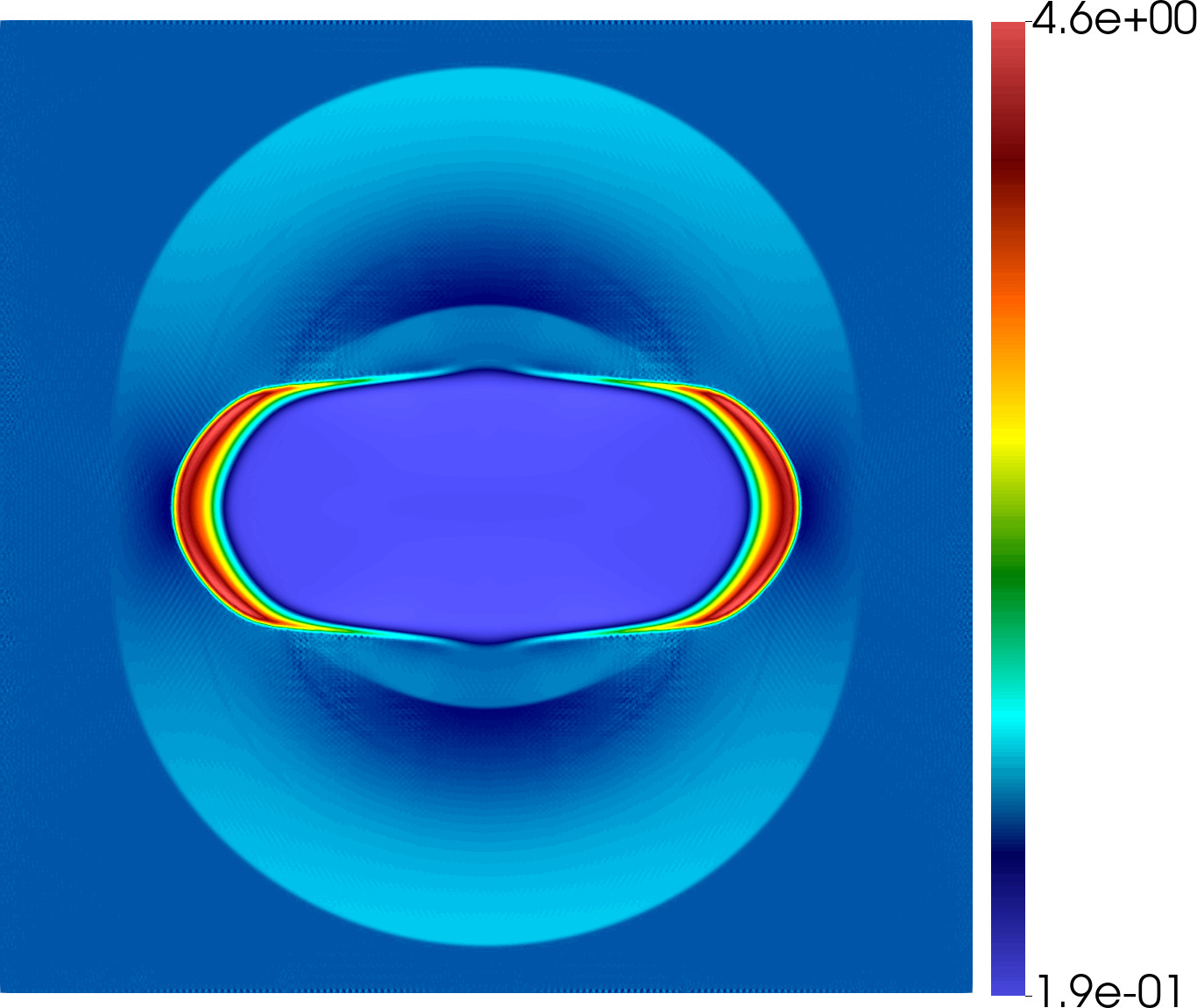}
    \caption*{Density $\rho_h$}
  \end{subfigure}
  \hspace{0.2in}
  \begin{subfigure}{0.4\textwidth}
    \centering
    \includegraphics[width=\textwidth]{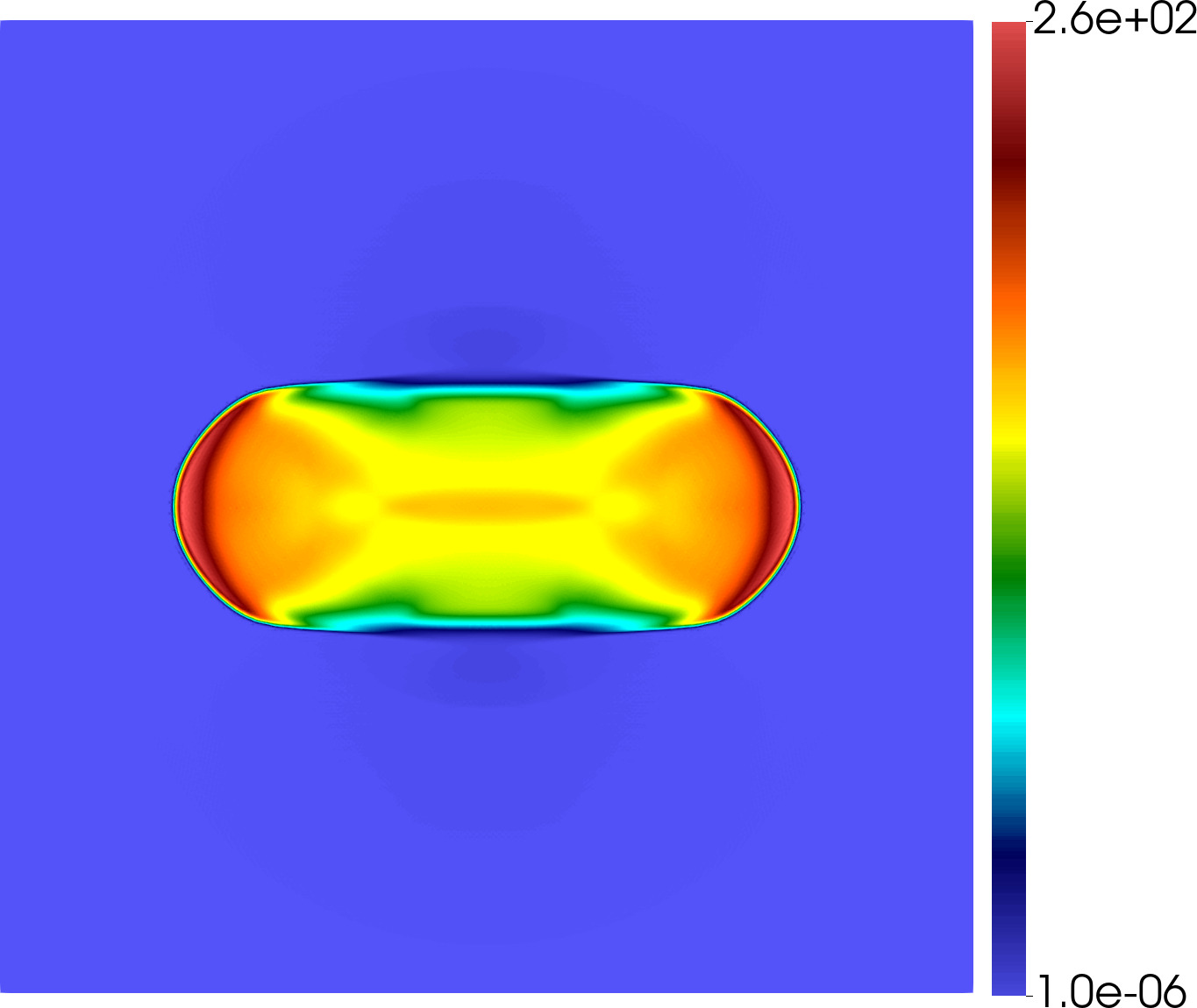}
    \caption*{Hydrodynamic pressure $P_h$}
  \end{subfigure}
  \\
  \vspace{0.1in}
  \begin{subfigure}{0.4\textwidth}
    \centering
    \includegraphics[width=\textwidth]{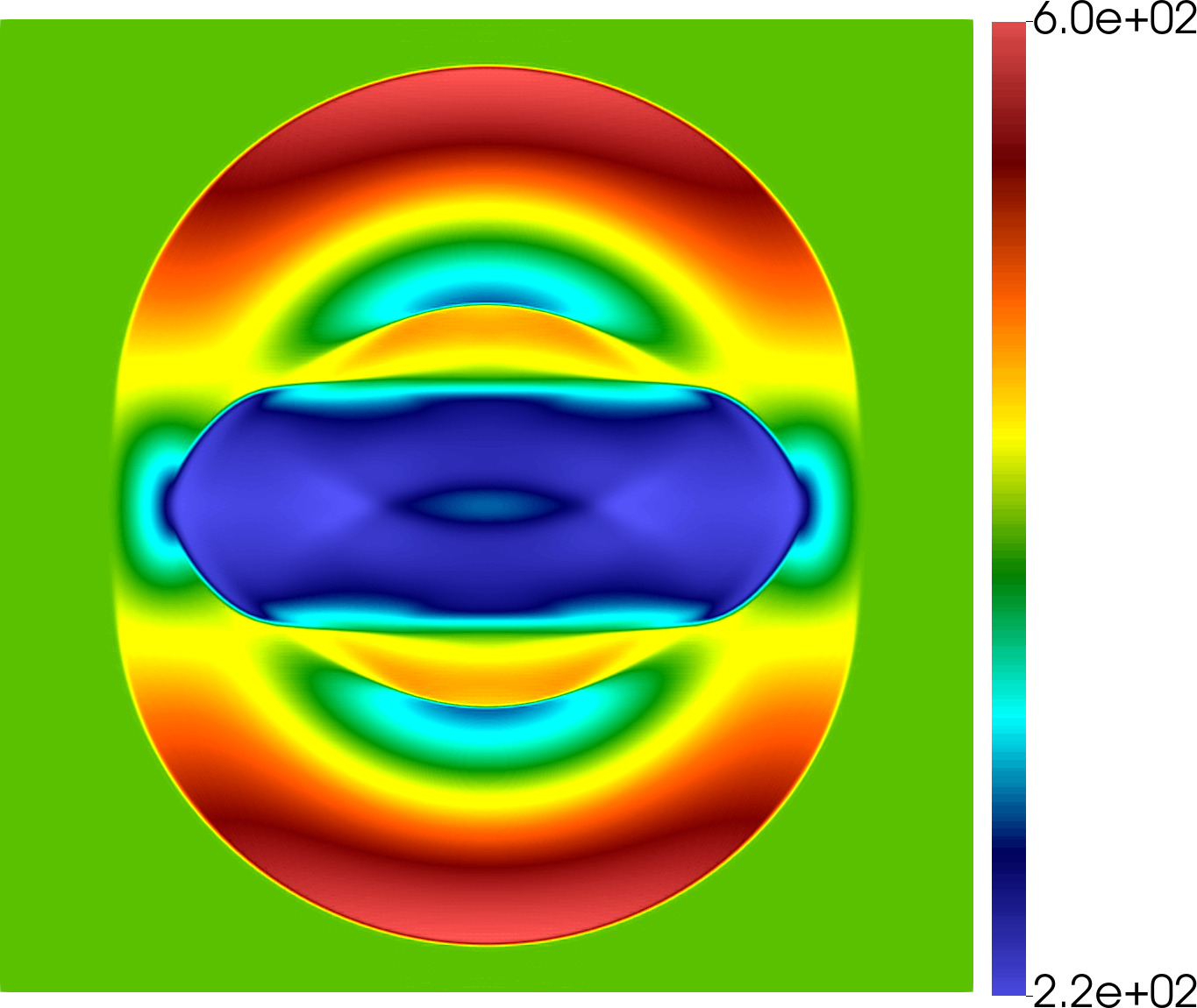}
    \caption*{Magnetic pressure $|\bB_h|^2/2$}
  \end{subfigure}
  \hspace{0.2in}
  \begin{subfigure}{0.4\textwidth}
    \centering
    \includegraphics[width=\textwidth]{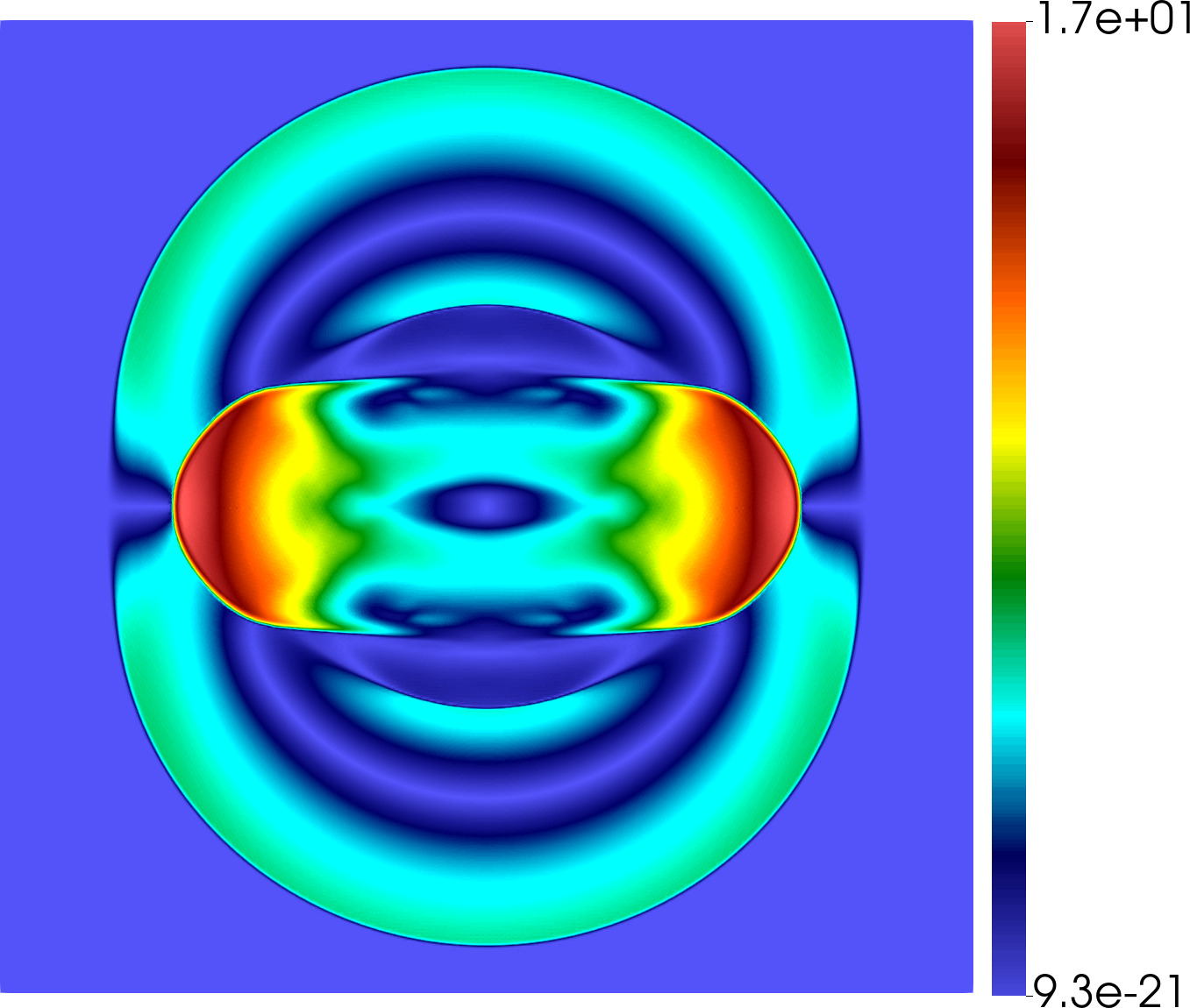}
    \caption*{Velocity magnitude $|\bu_h|$}
  \end{subfigure}
  \caption{Solution to Blast problem at time $t=0.01$ on 242077 $\polP_3$ nodes.}
  \label{fig:Blast:P3}
\end{figure}

\begin{figure}[h!]
  \centering
  \begin{subfigure}{0.4\textwidth}
    \centering
    \includegraphics[width=\textwidth]{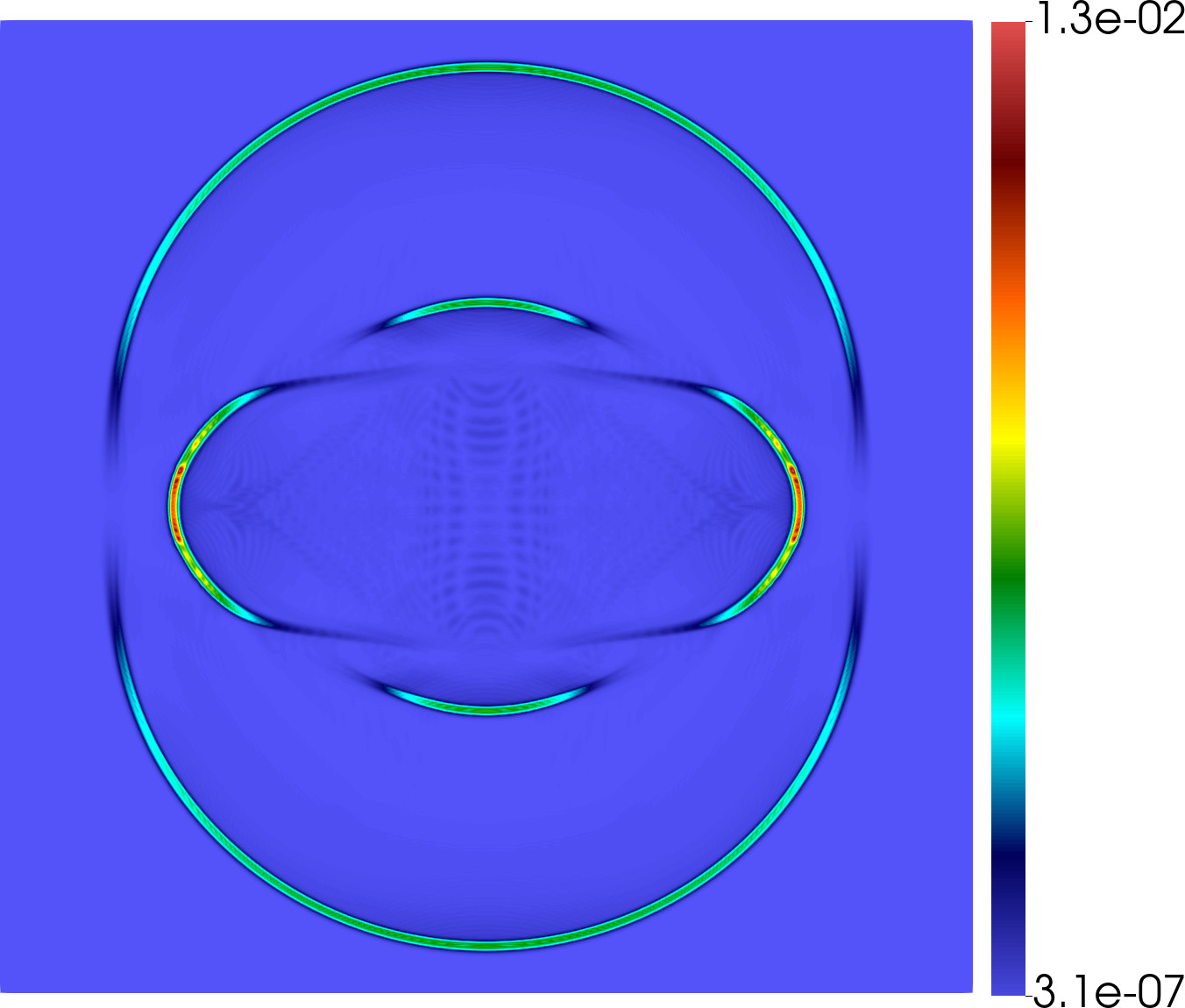}
    \caption*{$\e_h$ on 240359 $\polP_1$ nodes}
  \end{subfigure}
  \hspace{0.1in}
  \begin{subfigure}{0.4\textwidth}
    \centering
    \includegraphics[width=\textwidth]{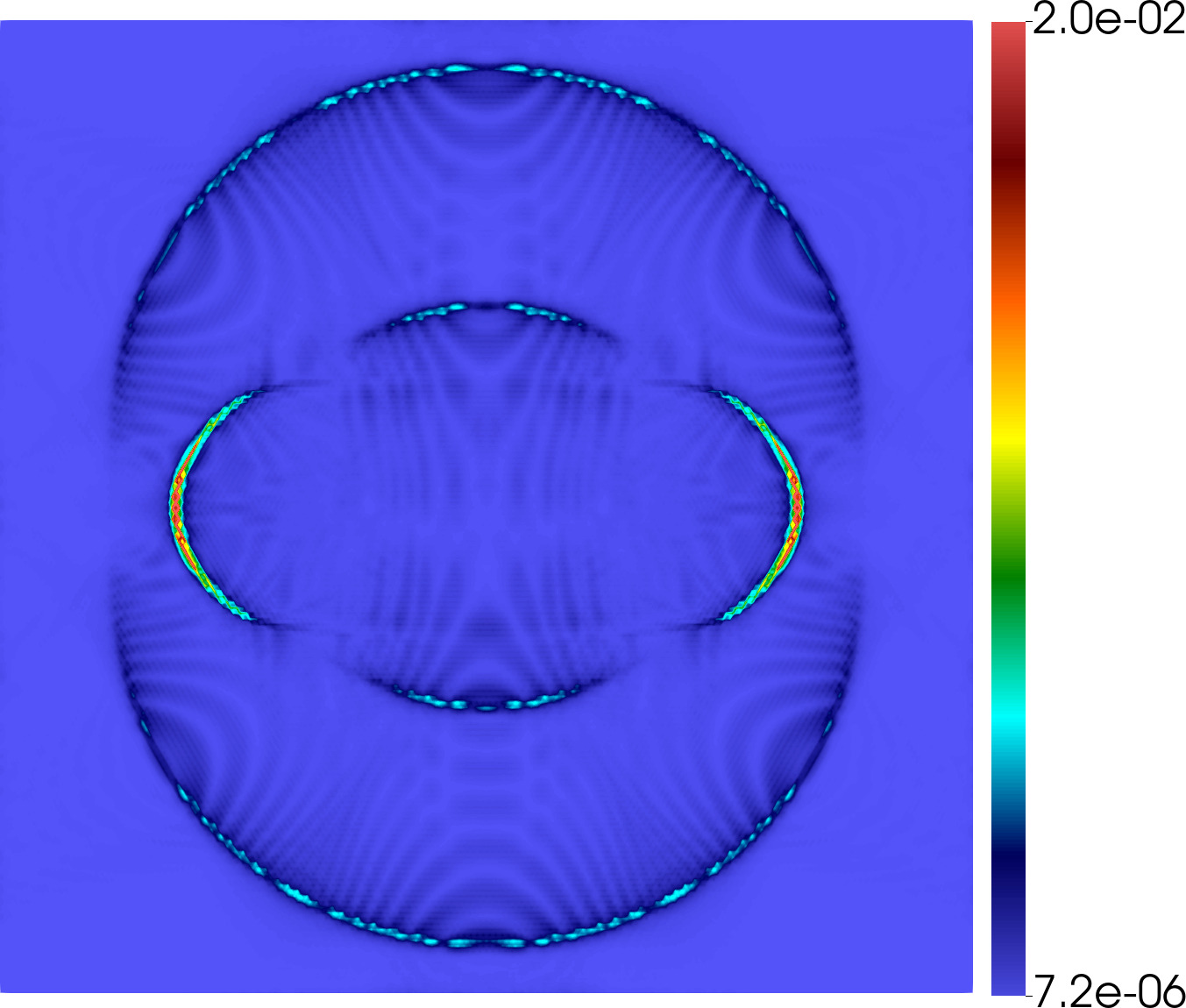}
    \caption*{$\e_h$ on 242077 $\polP_3$ nodes}
  \end{subfigure}
  \hfill
  \caption{Blast problem. The viscosity coefficients for $\polP_1$ and $\polP_3$ at time $t=0.01$.}
  \label{}
\end{figure}

}

\red{
\subsection{Supersonic plasma flow past 2D circular cylinder}
In this example, we will calculate supersonic plasma flow around an obstacle as a final example. The domain is $\Omega=[0,10]\times[-5,-5]$ and has a circular cylinder with radius $r=0.1$ and center $(x_c,y_c)=(1.2,0)$. The gas constant is chosen to be $\gamma=1.4$ and the initial condition is
\[
    (\rho_0, \bu_0, p_0, \bB_0) = \left(\gamma, (\gamma, 0), 1, (0, b_y) \right).
\]
Here $b_y$ is the magnetic field in the direction $y$, which is yet to be determined. The boundary condition is specified as follows: for density, velocity, and pressure we set the Dirichlet condition on the left, \ie $x=0$  with the same values as the initial condition; we set the slip boundary condition on the boundary on above and below boundaries $y=-0.5$ and $y=0.5$, and on the cylinder; on the right boundary $x=10$ the characteristic boundary condition is used. The magnetic field is set to $\bB_h=(0,b_y)$ at all outer boundaries, and we set $\bB_h\SCAL \bn=0$ on the cylinder. 

We run the simulation until $t=5$. At this time the shock waves reach the upper and lower boundaries, and their reflections do not interfere the backstream of the flow. In addition, the flow field does not reach the right boundary. We present the results of the simulation in the domain $\Omega_0=[0.6]\times[-1.23, 1.23]$. For this domain, the computational domain has 242,450 $\polP_1$ nodes. And the  

The simulation results are collected in Figures~\ref{fig:cyl1}-\ref{fig:cyl2} for different values of $b_y$. We plot Schlieren gray-scale diagram of the density
\[
\sigma:= \exp 
\left( 
-\zeta \frac{|\GRAD \rho_h|} {\max_{\Omega}|\GRAD \rho_h|}
\right),
\]
with $\zeta=5$ in Figures~\ref{fig:cyl1} and \ref{fig:cyl2}. The first column of the Figure~\ref{fig:cyl1} corresponds to a completely hydrodynamic regime, \ie for the value $b_y=0$. The method captures bow and trailing shocks very accurately. In addition, the so-called fishtail shock that develops downstream is well resolved and its development is accurately captured at all time levels. It can also be seen that the subsonic wake behind the cylinder is very well resolved and small eddies are captured.

After introducing a non-zero magnetic field, the structure of the hydrodynamic has noticeable effected. For example, we observe the formation of plasmoids in the downstream. This can also be seen in Figure~\ref{fig:cyl:isolines}, which shows magnetic field contours along with bow and trailing shock waves. Many plasmoids form over time and move downstream with the flow.

Next we consider the values $b_y=0.2$ and $b_y=0.3$. For these values, we see only one running plasmoid, the flow field is more stable and there is no subsonic wake of the hydrodynamic case. In addition, for higher values of $b_y$ the fishtail shock is not visible, but we observe very interesting shock formations. For values $b_y=0.2$ and $b_y=0.3$ we see the formation of additional trailing shocks close to the original one. The last graph in the Figure~\ref{fig:cyl2} indicates a trailing shock-like formation. Between the two visible trailing shocks we observe a crossed shock. This interesting solution structure needs to be analyzed in more detail.
 
It should be noted that we observe that at (only) two separation points at the downstream of the cylinder boundary the pressure value becomes very small. To avoid negative pressure values, we corrected the values of pressure at these two points. In our current research work, we extend the positivity preserving algorithm presented in our recent work \citep{Dao2023} to solve this problem. 

}

\begin{figure}[h!]
  \centering
  \includegraphics[width=0.9\textwidth]{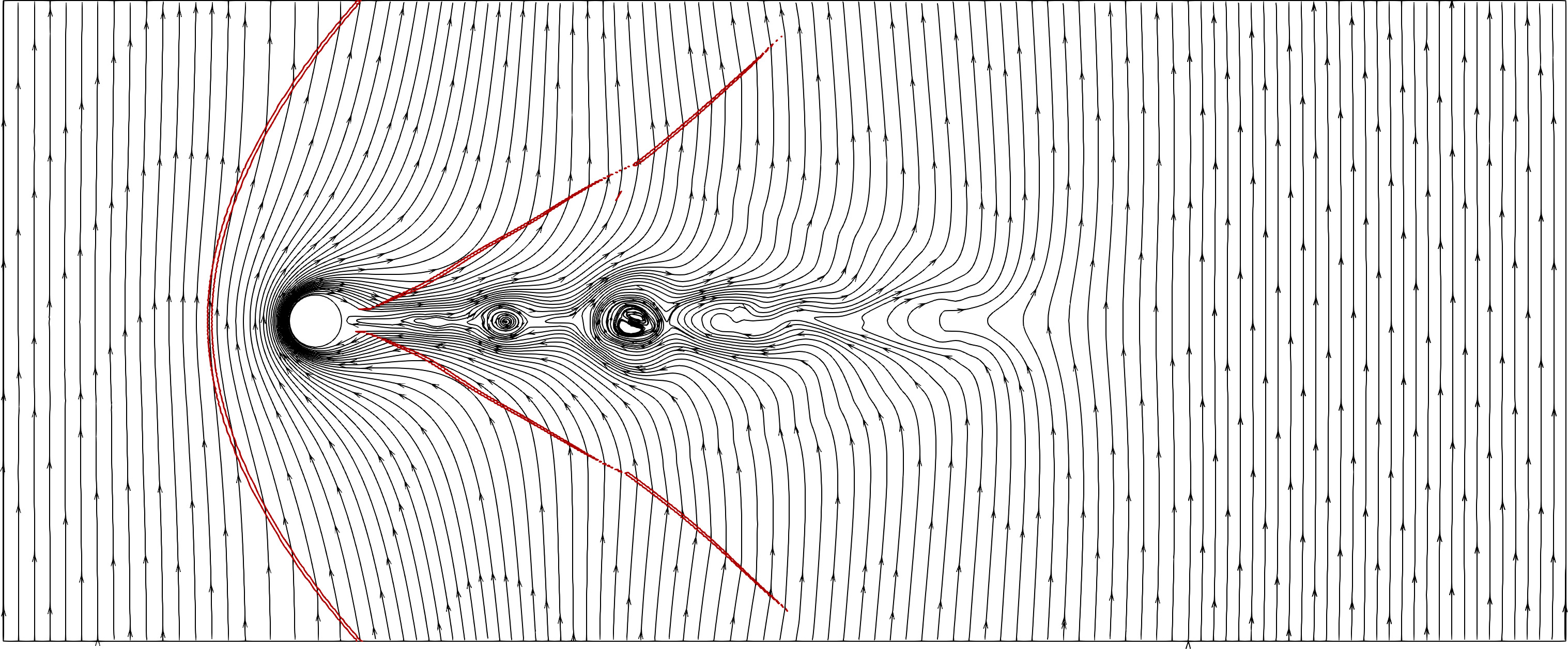}
  \caption{Supersonic plasma flow past a circular cylinder. The streamlines of the magnetic fields and bow and trailing shocks for $b_y=0.1$ and at time $t=2$.}
  \label{fig:cyl:isolines}
\end{figure}

\begin{figure}[h!]
  \centering
  \begin{subfigure}[b]{0.48\textwidth}
  \centering
  $b_y = 0.0$
  \\
  \includegraphics[width=\textwidth]{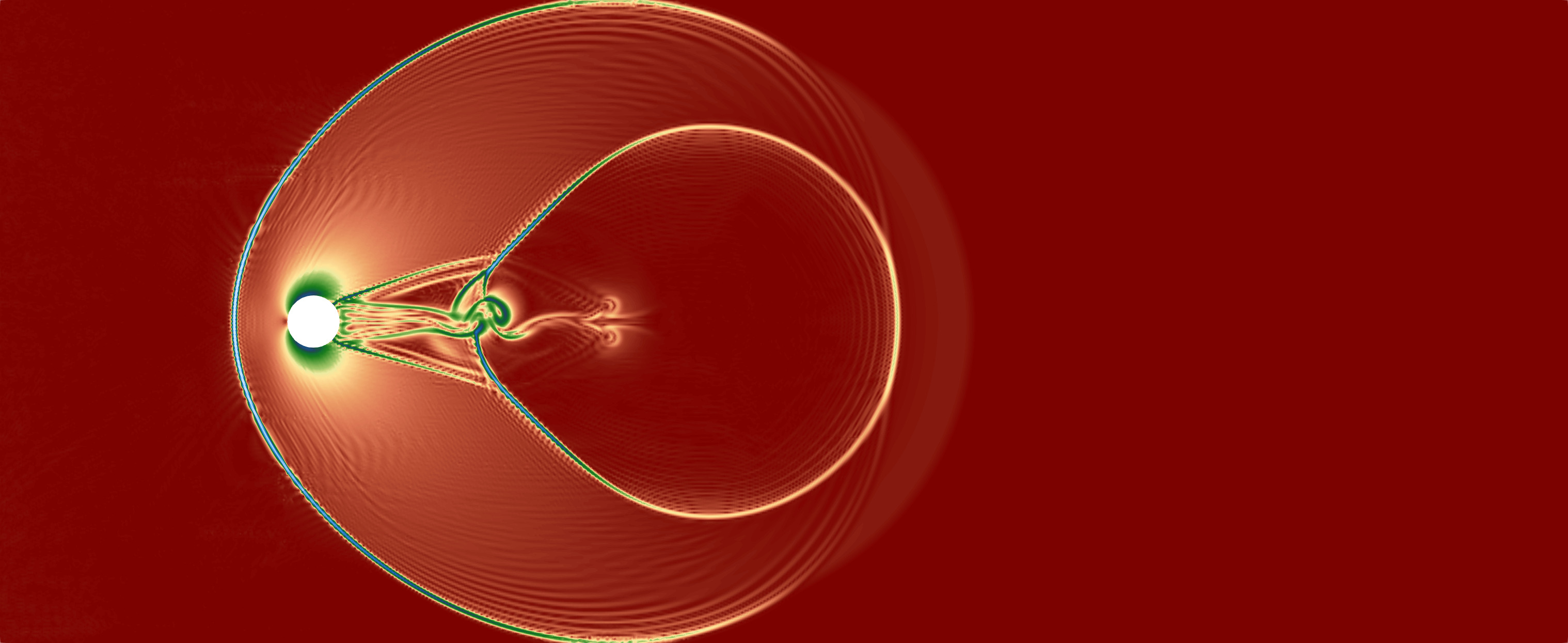}
  \includegraphics[width=\textwidth]{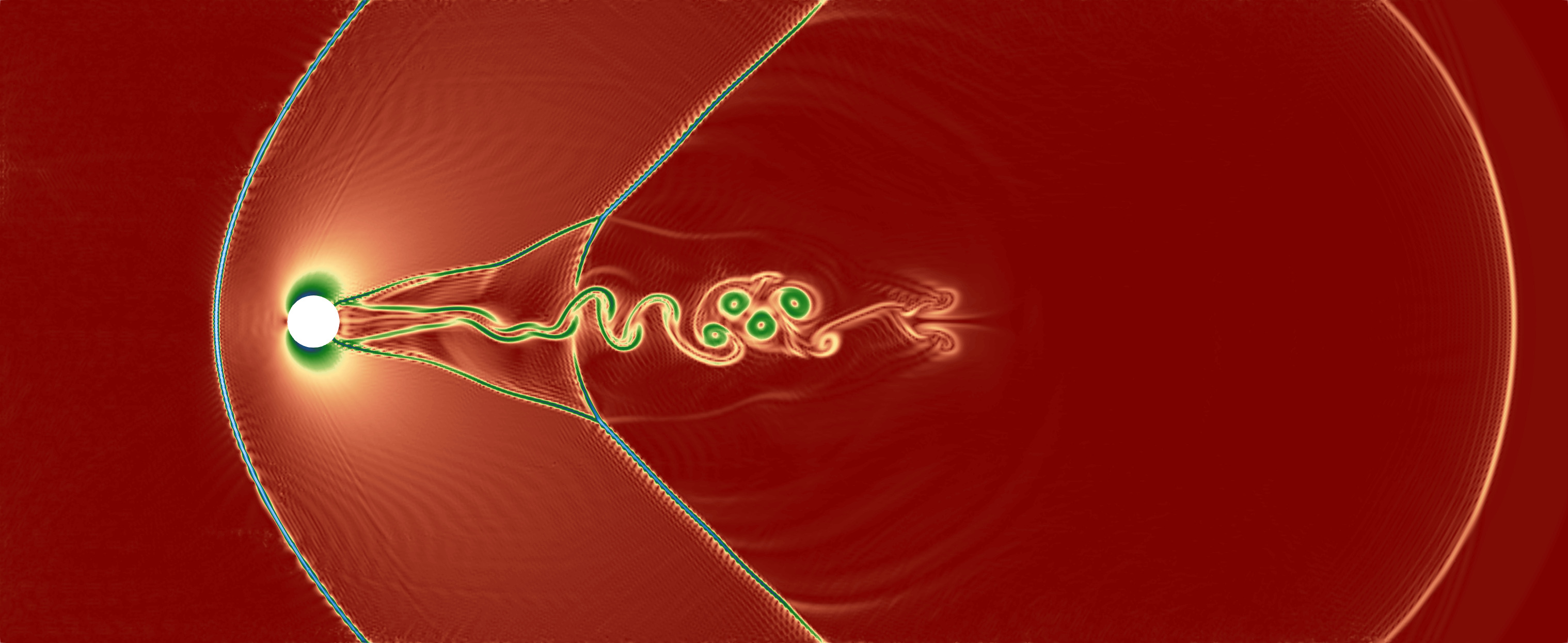}
  \includegraphics[width=\textwidth]{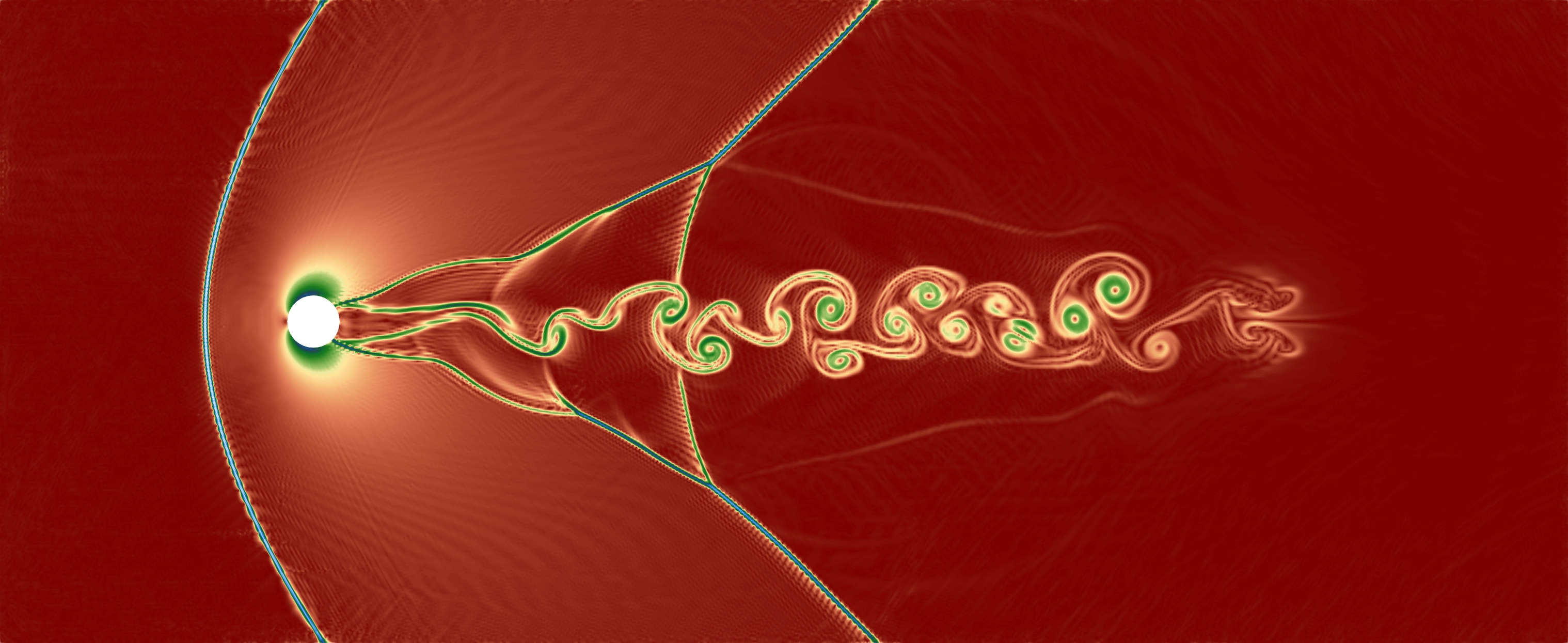}
  \includegraphics[width=\textwidth]{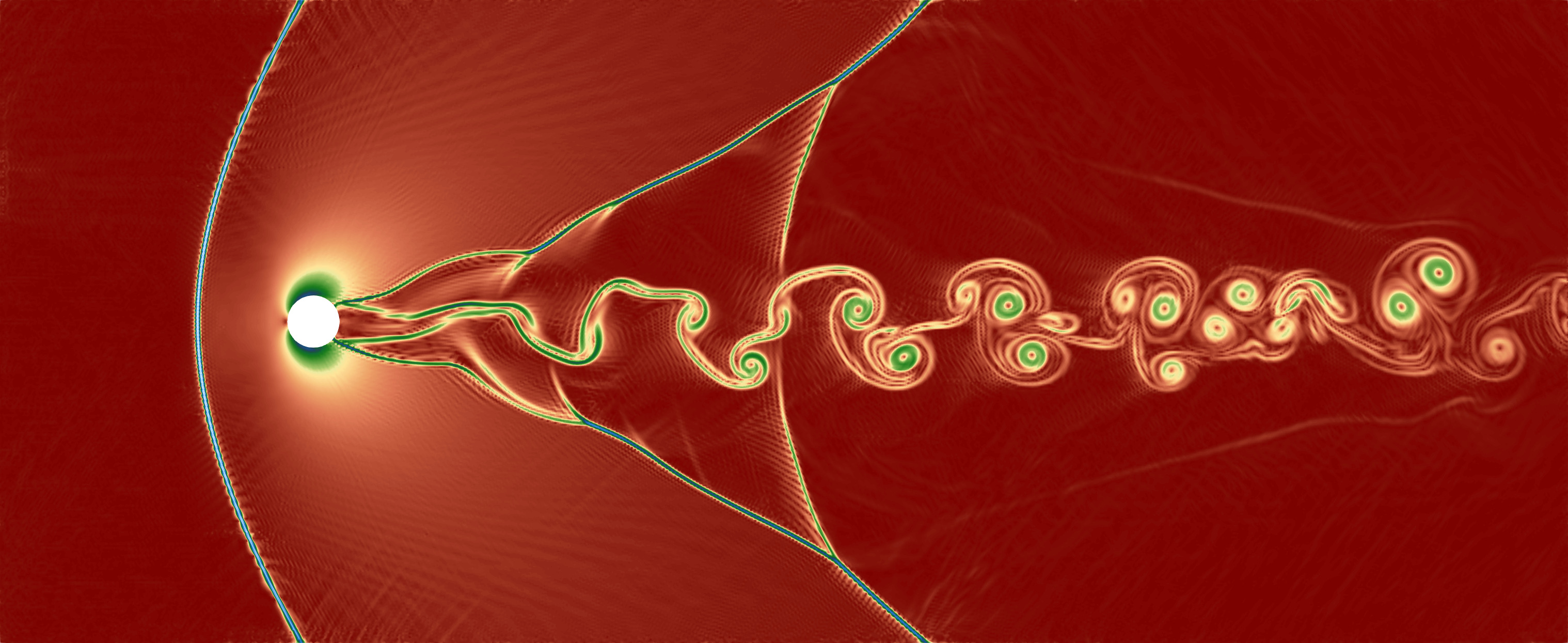}
  \includegraphics[width=\textwidth]{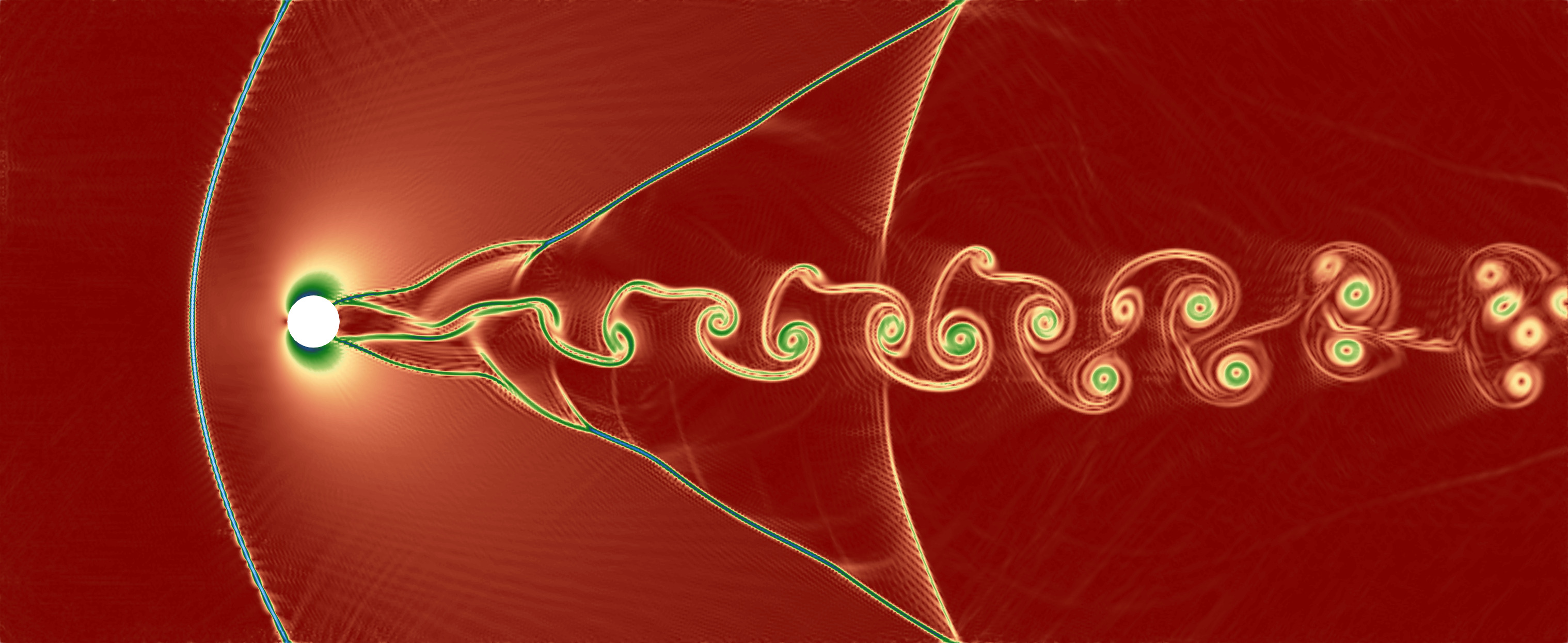}
  \end{subfigure}
  \begin{subfigure}[b]{0.48\textwidth}
  \centering
  $b_y = 0.1$
  \\
  \includegraphics[width=\textwidth]{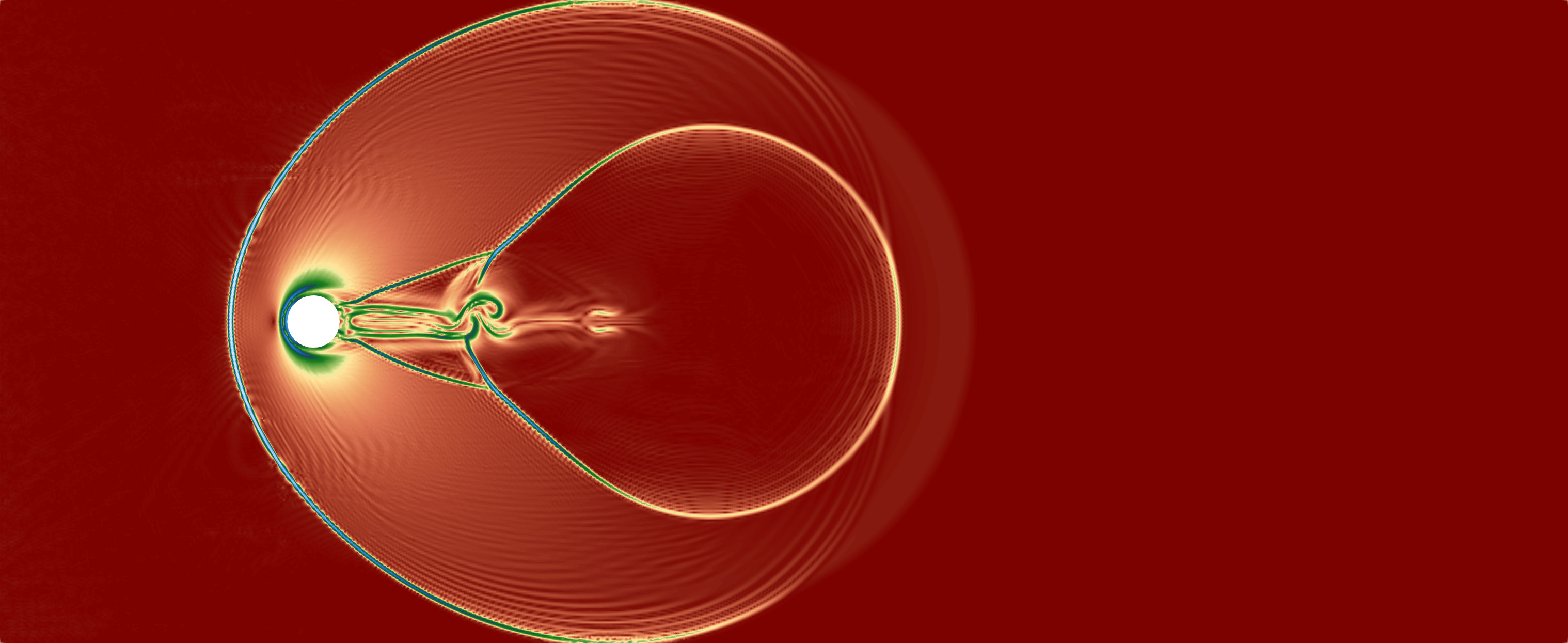}
  \includegraphics[width=\textwidth]{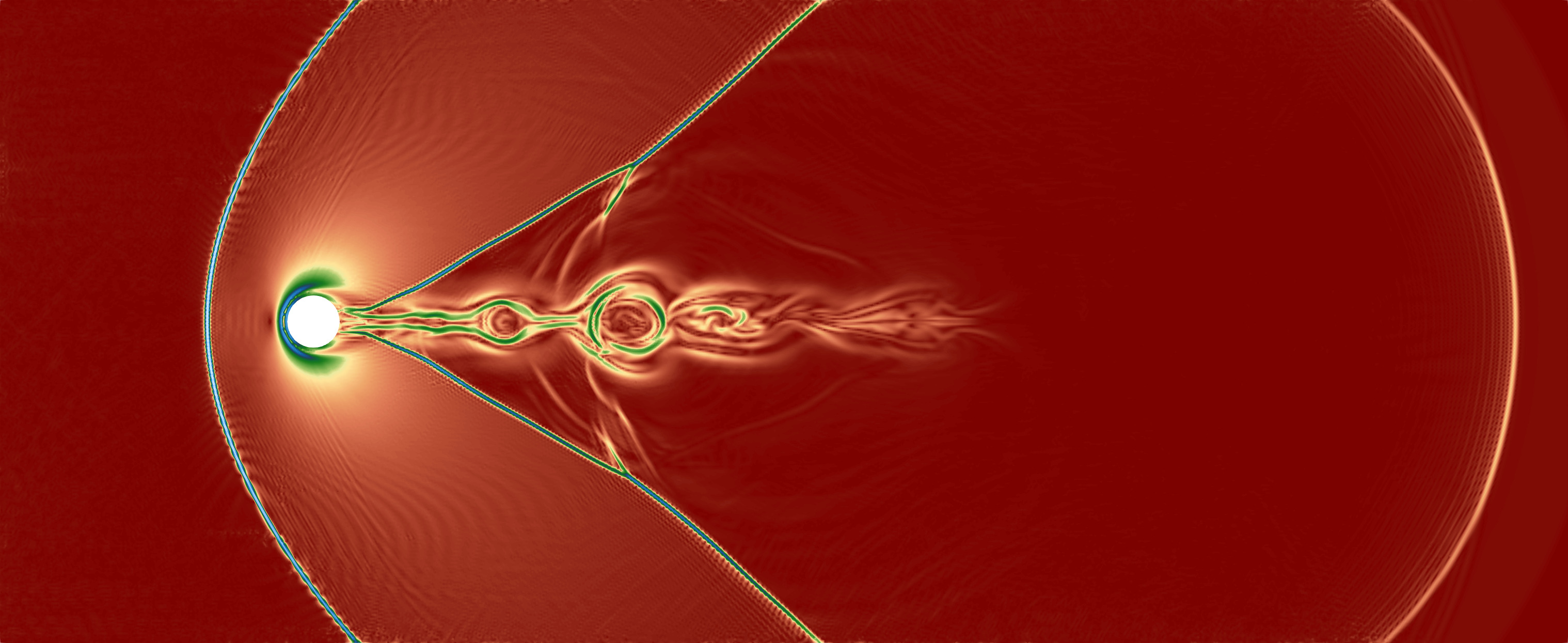}
  \includegraphics[width=\textwidth]{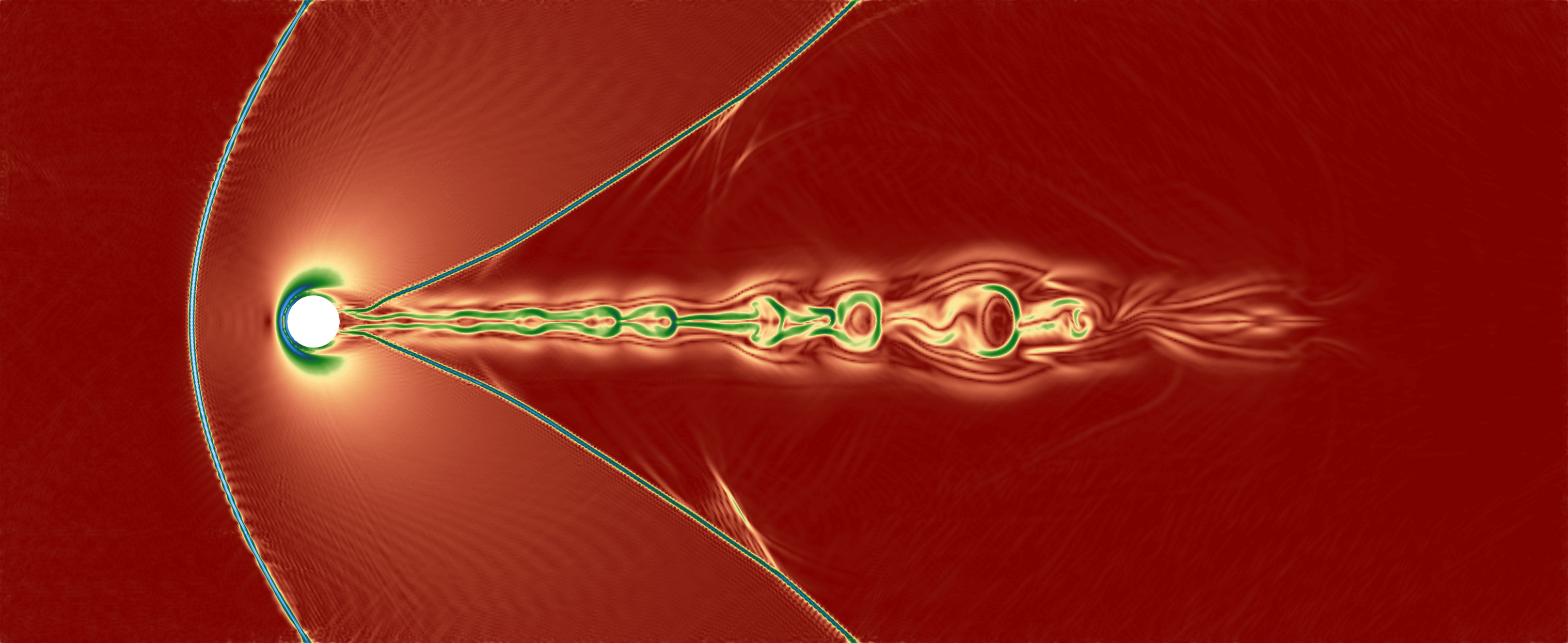}
  \includegraphics[width=\textwidth]{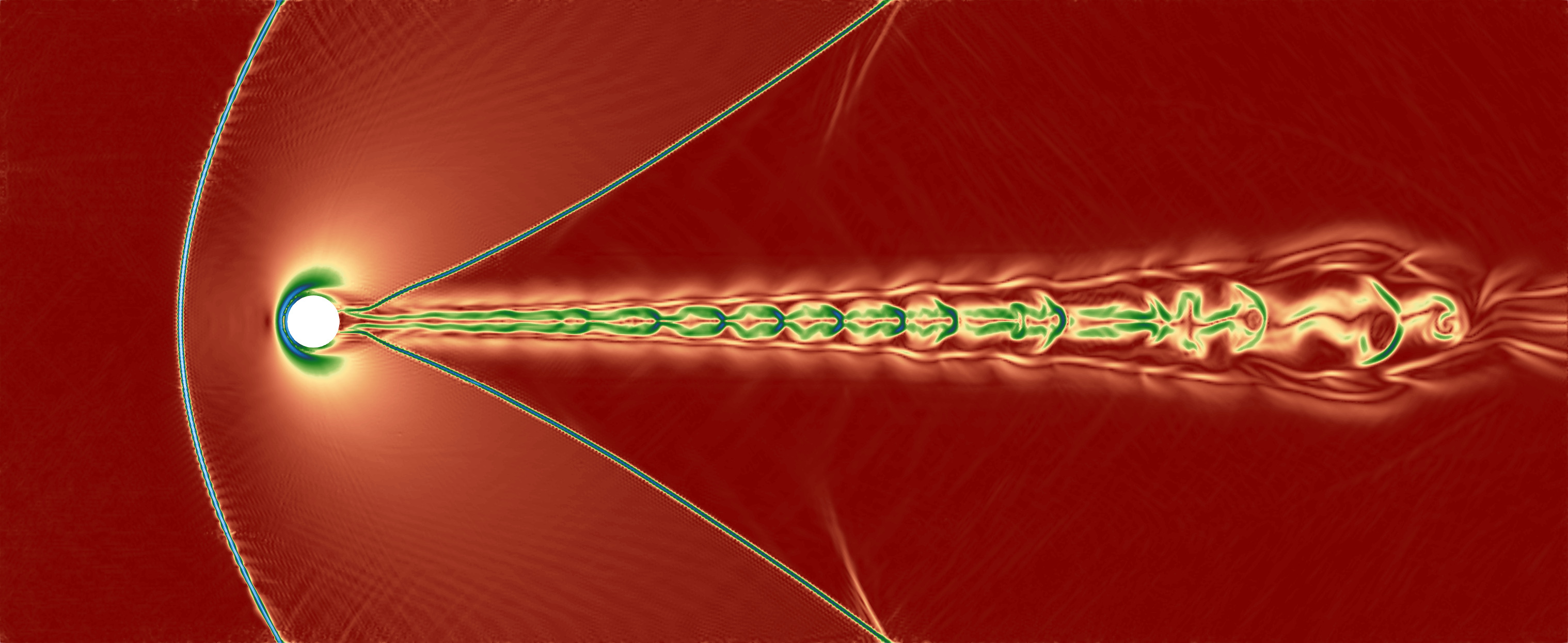}
  \includegraphics[width=\textwidth]{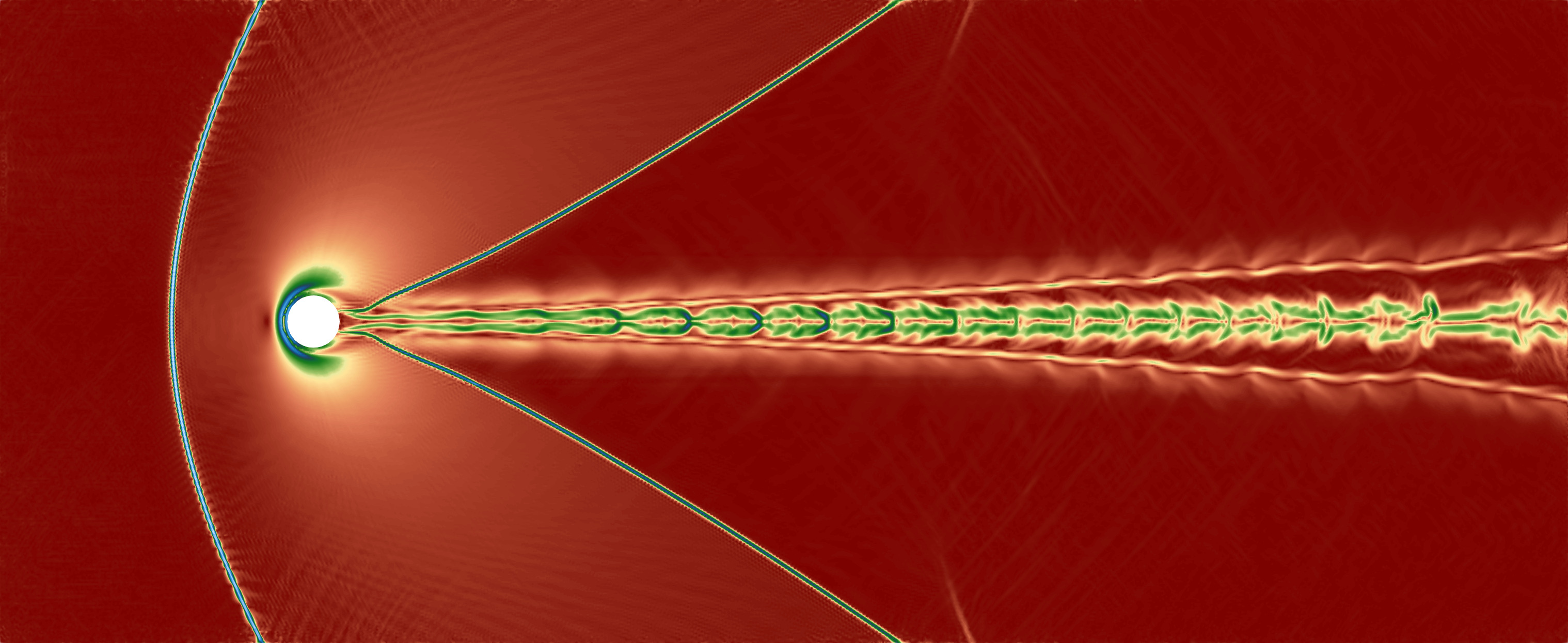}
  \end{subfigure}
  \\
  \vspace{0.1in}
  \begin{subfigure}[t]{0.7\textwidth}
  \centering
    \includegraphics[width=0.7\textwidth]{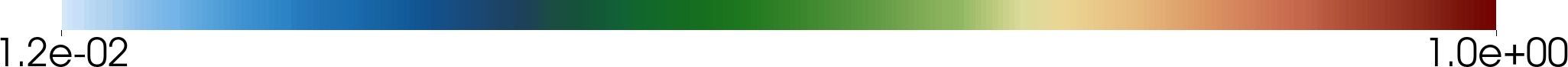}
  \end{subfigure}
  \caption{Supersonic plasma flow past a circular cylinder. Solution for the hydrodynamic regime at the left column and slightly magnetized fluid at the right column. The rows are corresponding to time levels: $t=1,2,3,4,5$. The mesh consists of 242450 $\polP_1$ nodes.}
  \label{fig:cyl1}
\end{figure}

\begin{figure}[h!]
  \centering
  \begin{subfigure}[b]{0.48\textwidth}
  \centering
  $b_y = 0.2$
  \\
  \includegraphics[width=\textwidth]{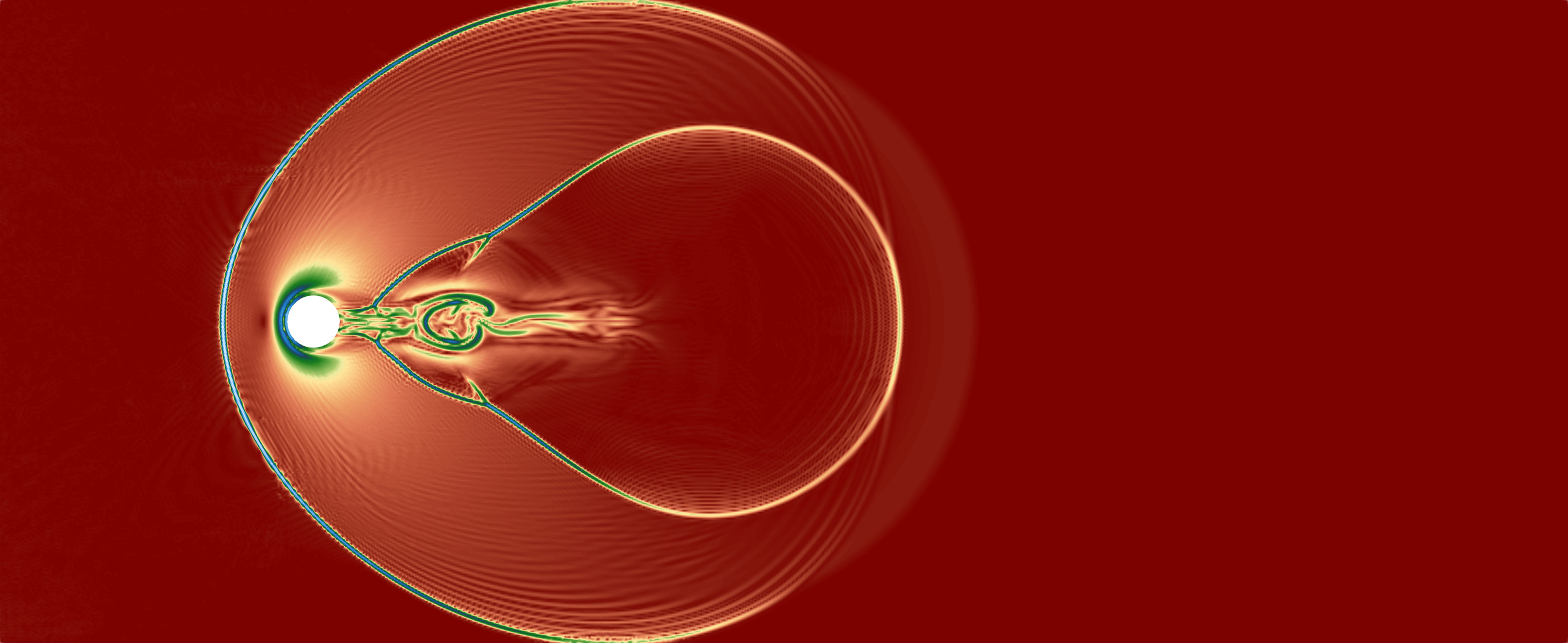}
  \includegraphics[width=\textwidth]{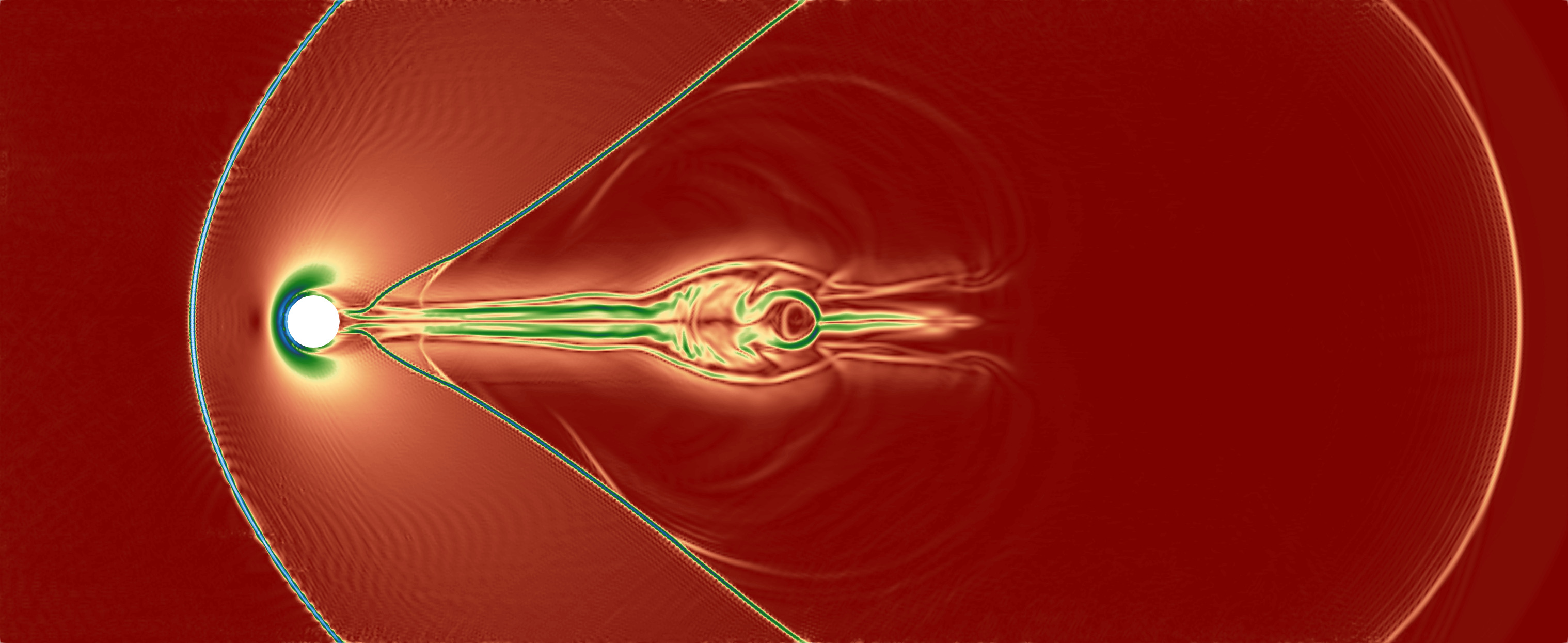}
  \includegraphics[width=\textwidth]{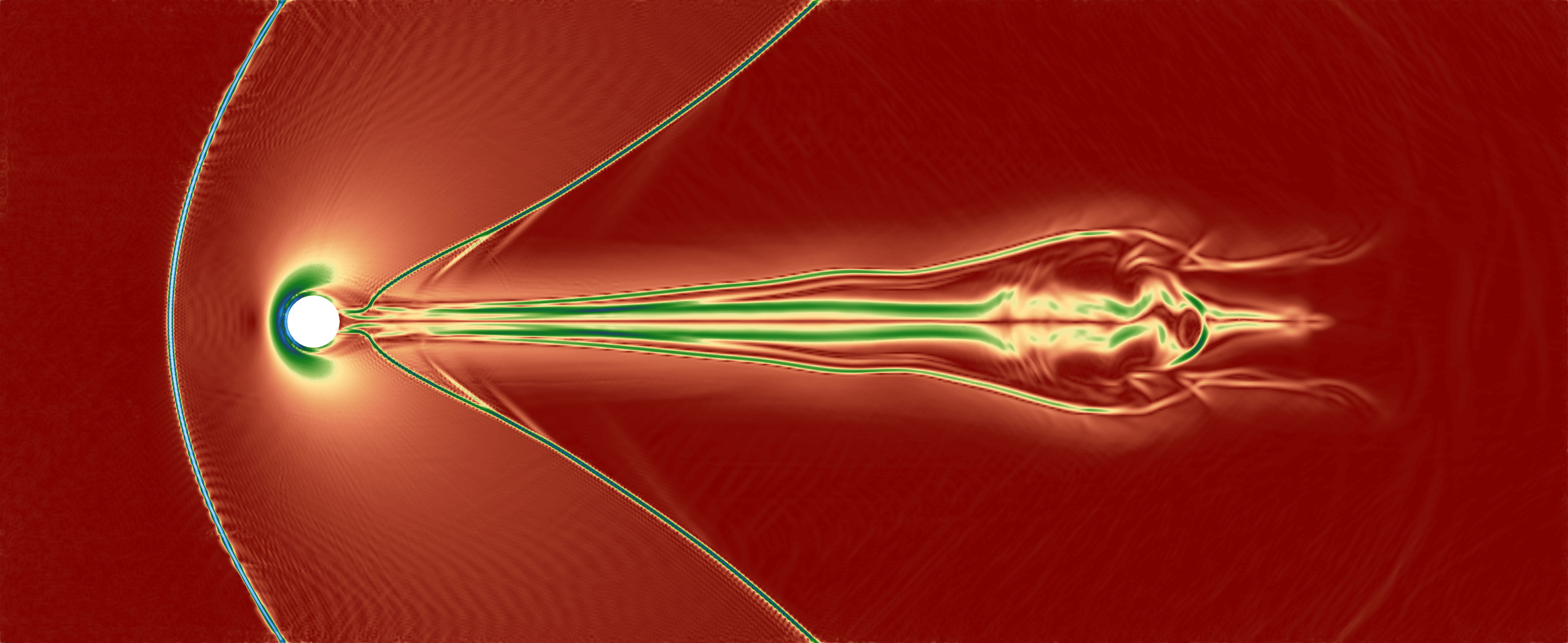}
  \includegraphics[width=\textwidth]{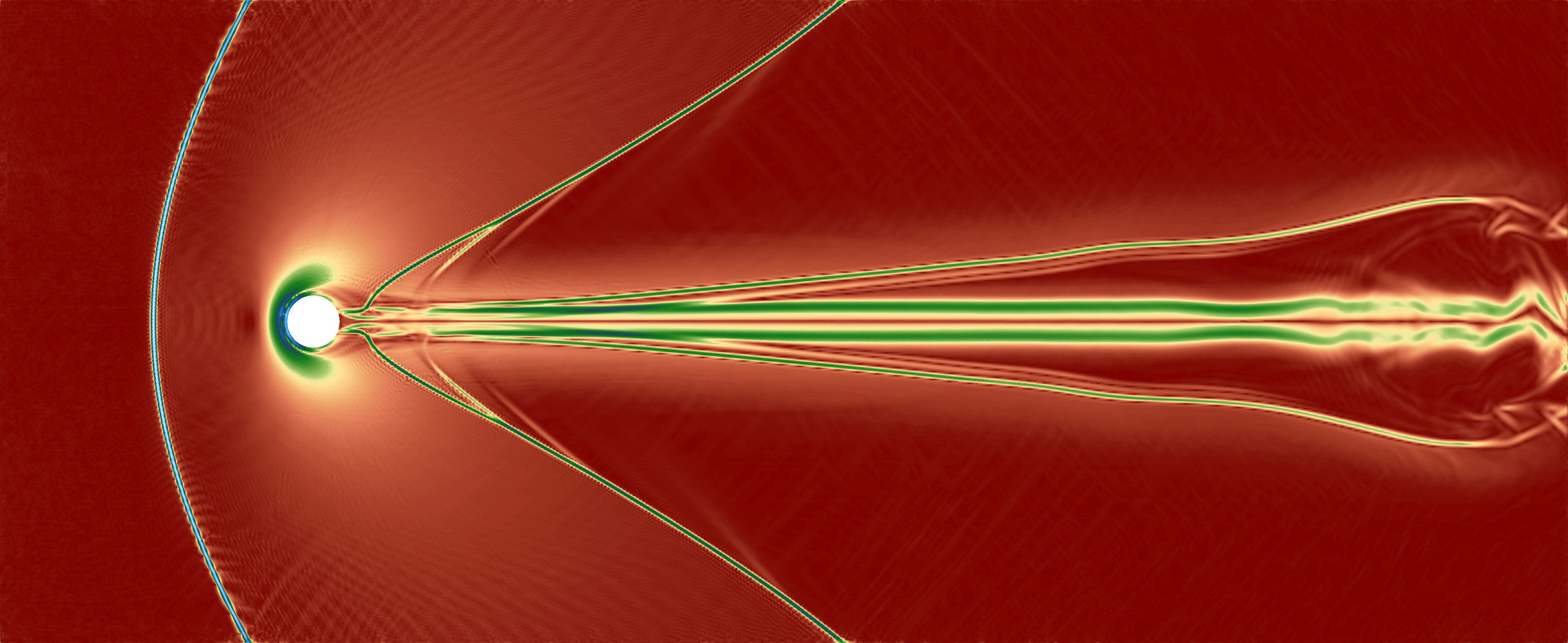}
  \includegraphics[width=\textwidth]{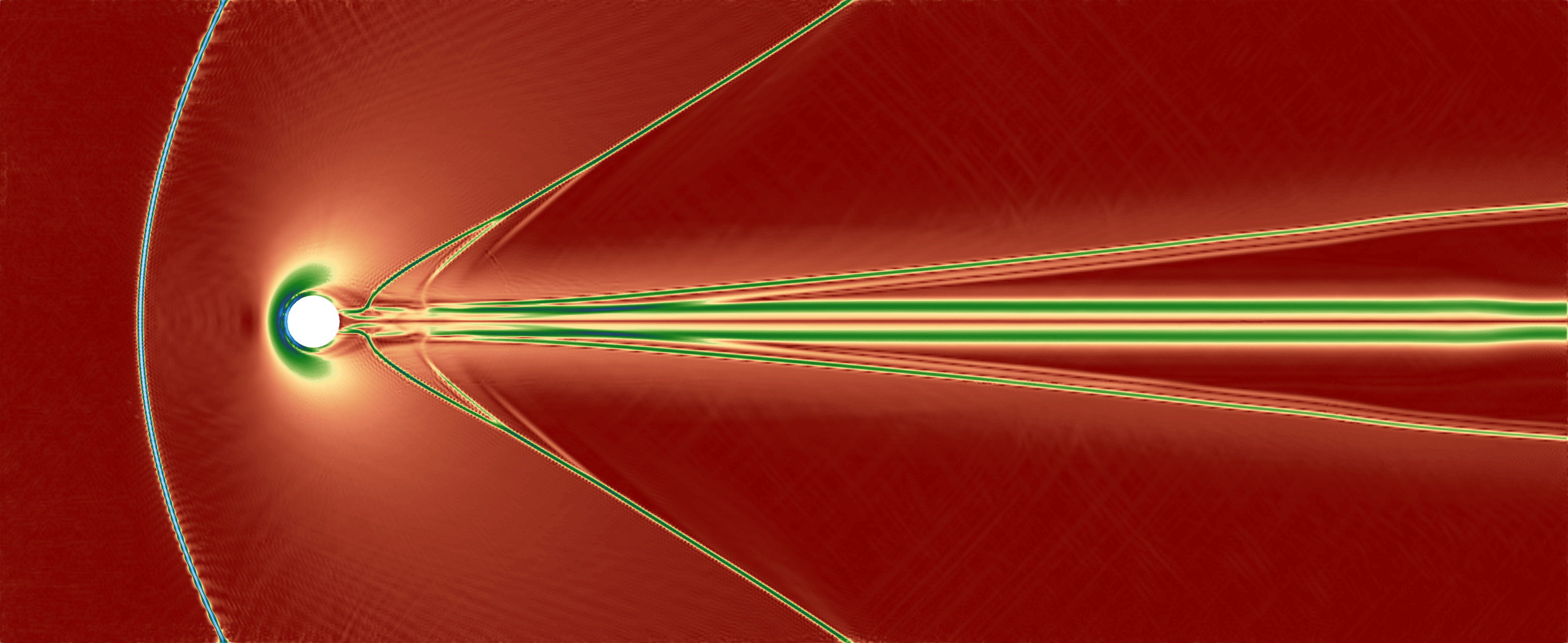}
  \end{subfigure}
  \begin{subfigure}[b]{0.48\textwidth}
  \centering
  $b_y = 0.3$
  \\
  \includegraphics[width=\textwidth]{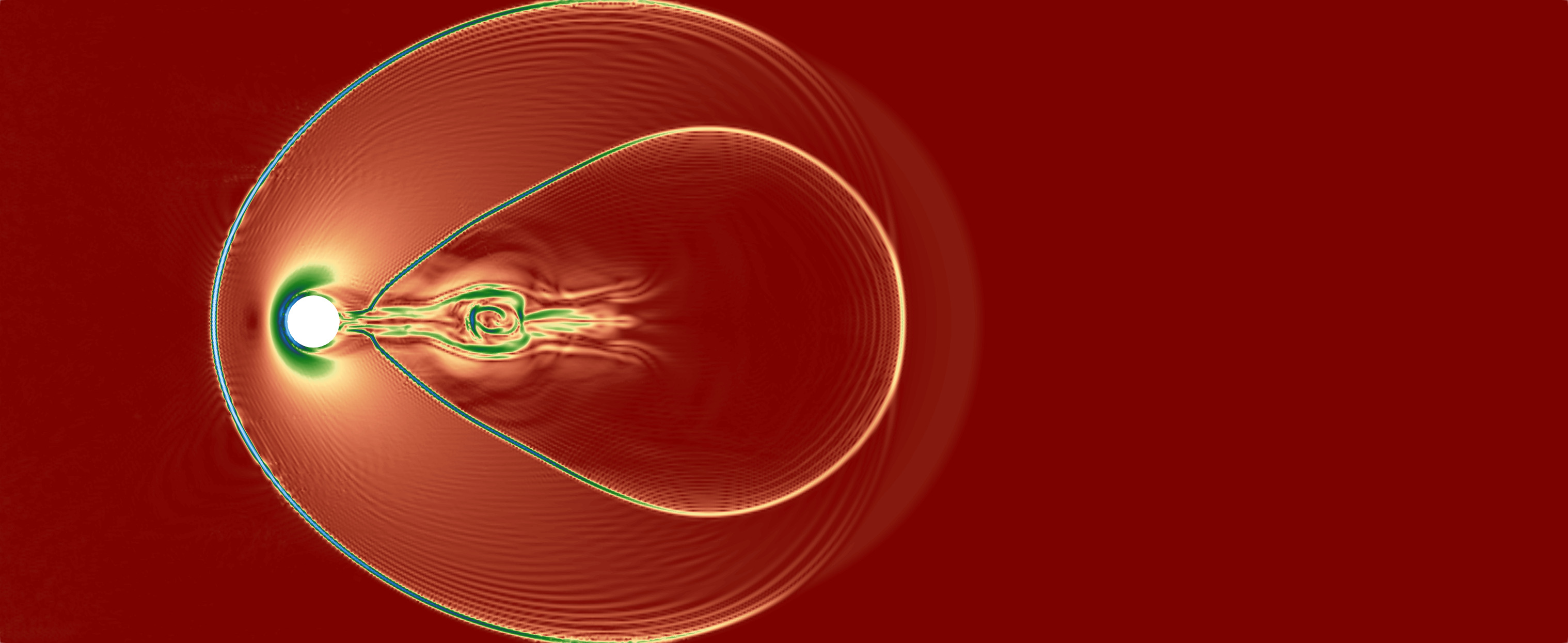}
  \includegraphics[width=\textwidth]{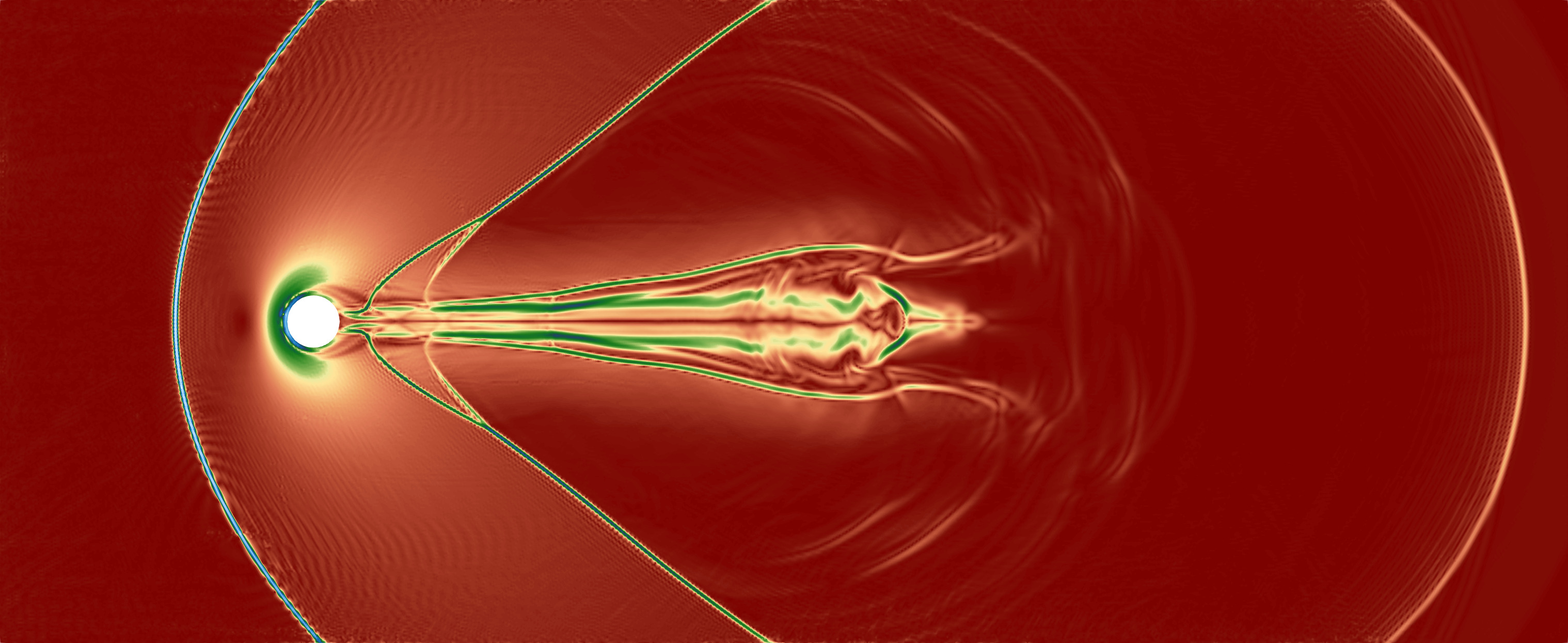}
  \includegraphics[width=\textwidth]{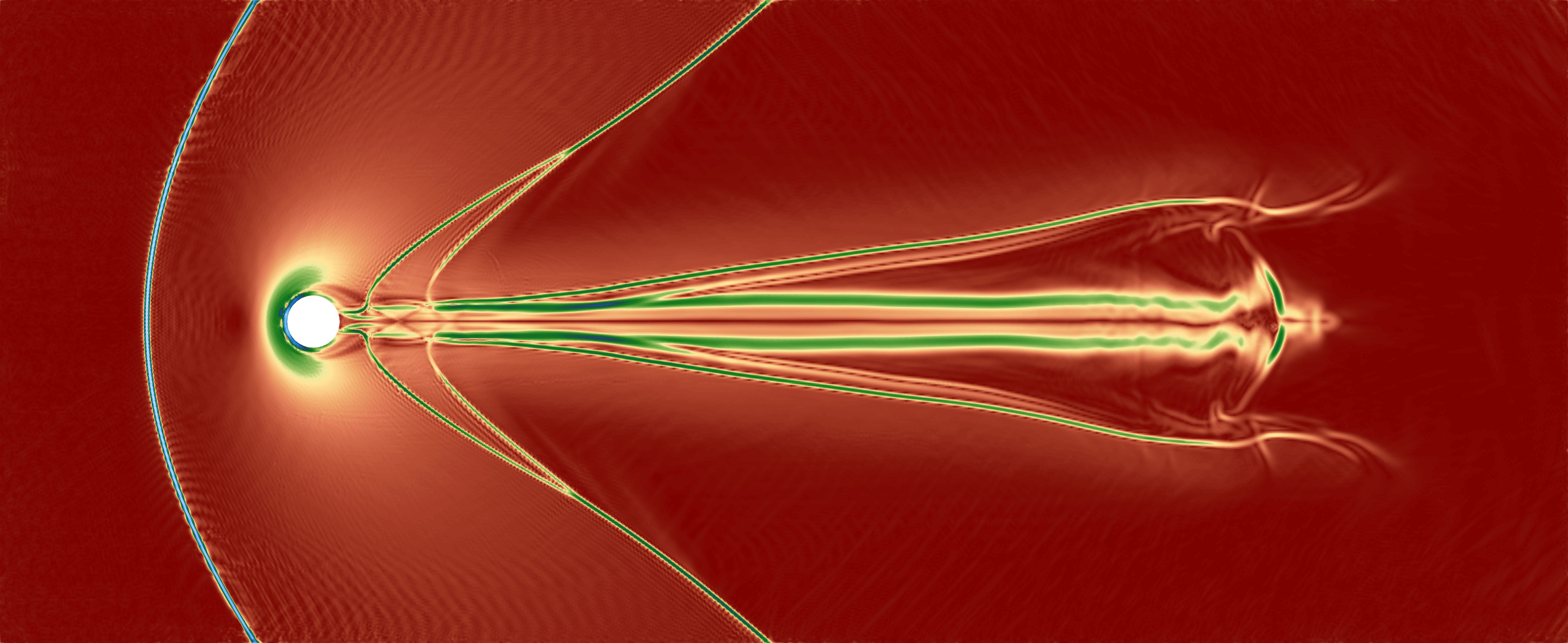}
  \includegraphics[width=\textwidth]{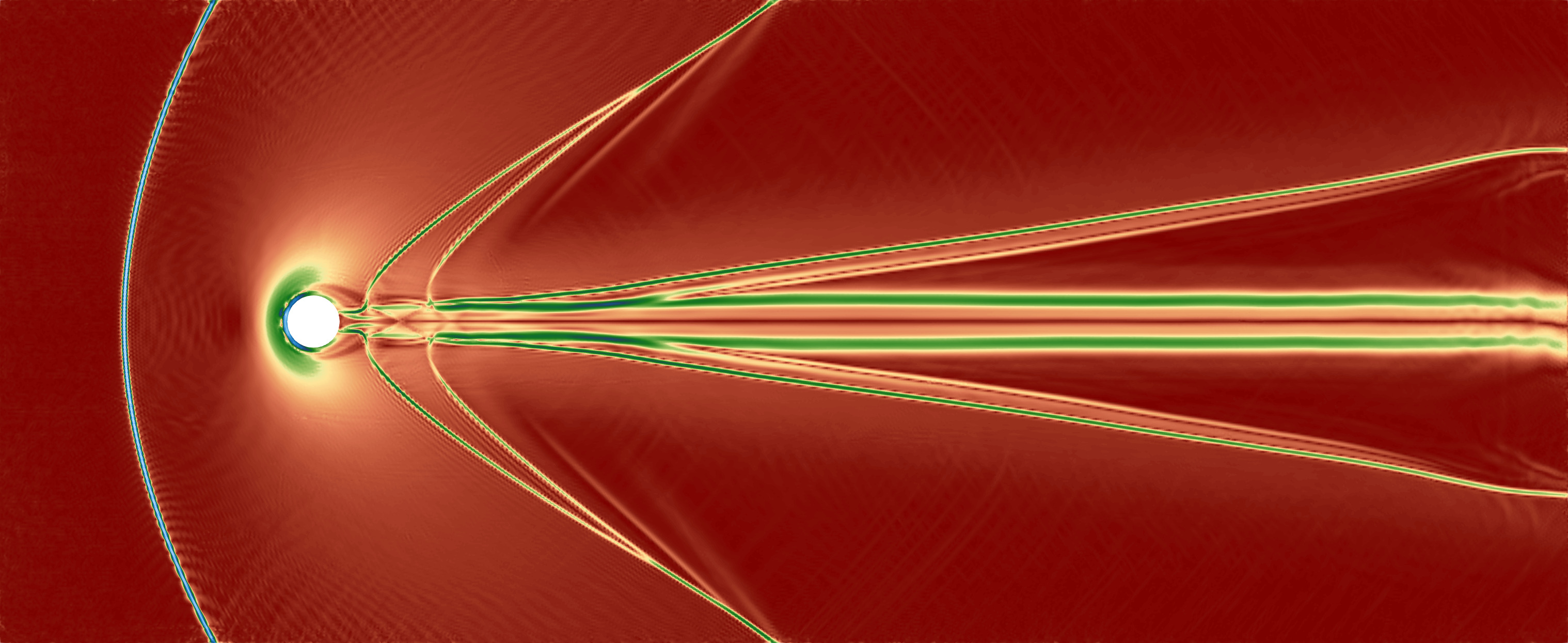}
  \includegraphics[width=\textwidth]{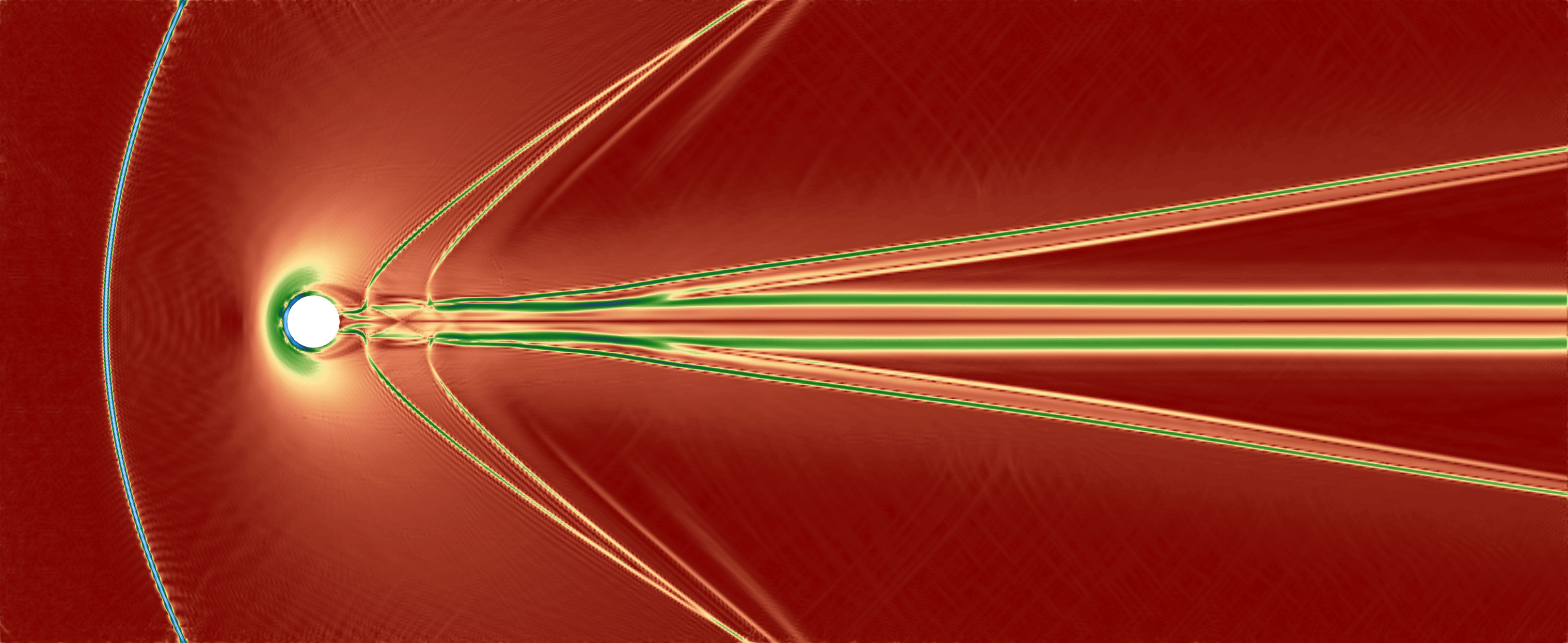}
  \end{subfigure}
  \\
  \vspace{0.1in}
  \begin{subfigure}[t]{\textwidth}
  \centering
    \includegraphics[width=0.7\textwidth]{figures_Cylinder_color_bar_flat}
  \end{subfigure}
  \caption{Supersonic plasma flow past a circular cylinder. Solution for two different magnetic regimes are plotted on each column. The rows are corresponding to time levels: $t=1,2,3,4,5$. The mesh consists of 242450 $\polP_1$ nodes.}
  \label{fig:cyl2}
\end{figure}

\red{
\section*{Conclusion}
In this paper, we presented a new high-order nodal artificial viscosity method for solving ideal MHD equations. The basis of the method is a first-order method, which does not include any special stabilization parameters and does not require explicit determination of the mesh size of the corresponding element. For higher degrees of the polynomial, we construct an additional mesh corresponding to the nodal values of the finite element space and use it when constructing a first-order scheme. Our numerical simulations show that the method adds sufficient viscosity to stabilize the scheme. In addition, we prove a discrete maximum principle for this new nodal viscosity for scalar conservation laws using linear finite elements.

The residual of MHD is then used to make the method a higher order in space. We tested the resulting scheme for several MHD benchmarks, ranging from smooth initial data to strong shock and discontunitie problems. We obtain optimal convergence rates for smooth problems of odd polynomial degrees, which is typical for Galerkin schemes.

Lastly, we remark that even though the scheme captures and resolves shocks and discontinuities, it is not positivity preserving. But it is one of the main ingredients for high-order positivity preserving schemes. A continuation of this work is to extend a limiting methodology similar to \citep{Dao2023}, which is the authors' ongoing research project. 

}

\appendix 

\section{Scalar conservation laws is revisited}
\label{Sec:scalar}
Let $\bef \in \calC^1(\mR; \mR^d)$ be the nonlinear flux and for $q_0(\bx) \in L^\infty(\Omega)$ be some given initial data that have compact support. Let us consider the following scalar conservation laws: 
\begin{equation}\label{eq:scalar}
  \p_t q + \DIV \bef(q) = 0, \quad q(\bx,t) = q_0(\bx), \quad (\bx, t) \in \Omega\times\mR^+,
\end{equation}
with appropriate boundary conditions. We apply the nodal viscosity proposed in this paper for solving the scalar equation. Specifically, we demonstrate that the first-order viscosity, applied to continuous piece-wise linear spaces, yields an approximation that preserves the maximum principle.

\subsubsection{First order method in the $\polP_1$ space}
\label{sec:visc}

The finite element approximation of \eqref{eq:scalar} reads: 
find $q_h(t) \in \calC^1(\mR^+; \calX_h)$ such that 
\begin{equation}\label{eq:fem:s}
  (\p_t q_h + \DIV \bef(q_h), v) = 0, \quad \forall v\in \calX_h.
\end{equation} 
We split the time $[0, \widehat t \,\,]$ into $N$ intervals of variable length. Let for $n=0,\ldots,N$, $t^n$ be the current time and the next time $t^{n+1}$ is computed using the time step $\tau$, \ie $t^{n+1} = t^n + \tau$. Let us denote by $q^n_h := \sum_{j\in\calV} Q^n_j \varphi_j(\bx)$ the finite element approximation of the solution $q(t^n, \bx)$ at time $t^n>0$ with the nodal values $Q_j^n$. Let us assume that the solution at time $t^n$ preserves the discrete maximum principle, \ie
\begin{equation}
  q_{\min}:=\inf_{\bx\in \Omega} q_0(\bx) \le
  \min_{1\le i\le N} Q^n_i\le \max_{1\le i\le N}  Q^n_i  
  \le  \sup_{\bx\in \Omega} q_0(\bx):=q_{\max}. \label{def_umin_umax}
\end{equation}

Next, we approximate the time derivative using the forward Euler method. The finite element approximation \eqref{eq:fem:s} takes the following form:
\begin{equation}\label{eq:lumped}
  m_i \frac{Q^{n+1}_i - Q^n_i}{\tau} + 
  \int_{S_i} \DIV \bef(q_h^n) \varphi_i \ud \bx
  =0,
\end{equation} 
for every $i\in\calV$. Here we know that $m_i>0$, for every $i\in\calV$, for continuous piecewise linear function spaces.

\begin{lemma}[\citep{Guermond_Nazarov_2014}]
  Let $\Phi_K: \widetilde K \mapsto K$ be the affine mapping to transform from a reference equilateral triangle (or thetraderon) $\widetilde K$ to $K$, and let $\polJ_K$ be the Jacobian matrix of this transformation. Then, for any element $K\subset \calT_h$ and shape functions $\varphi_i, \varphi_j \in \calX_h$ there exist constants $\alpha:=\frac{2}{d+1}$, $\gamma:= \frac{2d}{d+1}$, where $d$ is the space dimension, such that
  \begin{equation}\label{eq:intK}
    \int_K (\polJ_K^\top \GRAD \varphi_j) \SCAL (\polJ_K^\top \GRAD \varphi_i) \ud \bx
    = 
    \begin{cases}
      -\alpha \ |K|, \quad i\not=j,\\
      \ \ \, \gamma \ |K|, \quad i=j,
    \end{cases}
  \end{equation}
  $\forall i,j \in \calI(K)$.
\end{lemma}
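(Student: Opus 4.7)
The plan is to push the integral forward through the affine map $\Phi_K$, reduce it to a purely geometric quantity on the reference equilateral simplex $\widetilde K$, and then pin down the two constants using symmetry plus the partition of unity.

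First I would change variables. Because $\varphi_i|_K$ is affine and $\Phi_K$ is affine, $\widetilde\varphi_i := \varphi_i \circ \Phi_K$ is exactly the standard $\polP_1$ Lagrange shape function on $\widetilde K$. The chain rule stated earlier in the preliminaries gives $(\polJ_K^\top \GRAD \varphi_i)\circ \Phi_K = \GRAD \widetilde\varphi_i$, so the integrand pulls back to $\GRAD \widetilde\varphi_j \SCAL \GRAD \widetilde\varphi_i$ on $\widetilde K$. Since $\widetilde\varphi_i$ is affine, this pullback is a constant, and the change-of-variables Jacobian contributes $|\det \polJ_K|\,|\widetilde K| = |K|$. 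Thus
\[
\int_K (\polJ_K^\top \GRAD \varphi_j)\SCAL(\polJ_K^\top \GRAD \varphi_i)\,\ud\bx \;=\; |K|\,(\GRAD \widetilde\varphi_j \SCAL \GRAD \widetilde\varphi_i).
\]
All $K$-dependence is therefore absorbed into the single factor $|K|$, which is exactly the structure claimed in \eqref{eq:intK}.

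Next I would evaluate the constant $\GRAD \widetilde\varphi_j \SCAL \GRAD \widetilde\varphi_i$ on the equilateral reference simplex. By the symmetry group of $\widetilde K$ (which acts transitively on vertices and on ordered pairs of distinct vertices), there exist scalars $g,c$ with $|\GRAD \widetilde\varphi_i|^2 = g$ and $\GRAD \widetilde\varphi_i \SCAL \GRAD \widetilde\varphi_j = c$ for all $i\neq j$. The partition of unity $\sum_i \widetilde\varphi_i \equiv 1$ yields $\sum_i \GRAD \widetilde\varphi_i = 0$; squaring this identity gives
\[
0 = (d+1)g + (d+1)d\,c, \qquad\text{hence}\qquad c = -\tfrac{g}{d}.
\]

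Finally I would determine $g$ from elementary geometry. Since $\widetilde\varphi_i$ equals $1$ at vertex $v_i$ and $0$ on the opposite face, $|\GRAD \widetilde\varphi_i| = 1/h$, where $h$ is the distance from a vertex to the opposite face of the unit equilateral $d$-simplex. A standard computation (e.g., using that the circumradius is $\sqrt{d/(2(d+1))}$ and the inradius is $1/d$ of it) gives $h = \sqrt{(d+1)/(2d)}$, so $g = 2d/(d+1) = \gamma$ and $c = -2/(d+1) = -\alpha$. Combining with the first step yields \eqref{eq:intK}.

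The routine parts are the change of variables and the partition-of-unity identity; the only nontrivial step is the geometric calculation of $h$, but this is a classical formula for the regular simplex and quickly checked for $d=2,3$ (giving $g=4/3$, $c=-2/3$ and $g=3/2$, $c=-1/2$, respectively).
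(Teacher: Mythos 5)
Your proof is correct. The paper itself gives no proof of this lemma---it is imported by citation from Guermond--Nazarov (2014)---so there is nothing to compare line by line; your argument is the standard one and is consistent with the pullback computation the paper performs later in the proof of the theorem (the derivation of \eqref{eq:th:integration} uses exactly your first step: $\GRAD\widetilde\varphi_j\SCAL\GRAD\widetilde\varphi_i$ is constant on $\widetilde K$ and $|\det\polJ_K|=|K|/|\widetilde K|$). The three ingredients---affine change of variables absorbing all $K$-dependence into $|K|$, the symmetry of the regular simplex forcing a single diagonal value $g$ and a single off-diagonal value $c$ with $c=-g/d$ via the partition of unity, and the height formula $h=\sqrt{(d+1)/(2d)}$ giving $g=2d/(d+1)$---are each sound, and your numerical checks for $d=2,3$ match $\gamma=4/3,\,3/2$ and $\alpha=2/3,\,1/2$. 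One remark worth making explicit: the identity \eqref{eq:intK} is only valid for piecewise \emph{linear} shape functions, i.e.\ $k=1$ in \eqref{eq:Xh}; for $k\ge 2$ the gradients are not constant on $K$ and some off-diagonal stiffness entries change sign, so the lemma as literally stated (with $\varphi_i,\varphi_j\in\calX_h$ for general $k$) needs that restriction. You assume affineness at the outset, which is the correct reading given that the lemma is used in the $\polP_1$ setting of the appendix, but the hypothesis deserves to be stated rather than inferred.
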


The viscous bilinear form described by integral \eqref{eq:intK} possesses the precise structure required to establish the positivity, since it yields negative values for all $i \neq j$. This property is commonly referred to as the {\em acute angle condition} in the literature, as discussed in detail, for example, in \citep[Sec.~3.]{Guermond_Nazarov_2014}. We point out that, in the work \citep{Guermond_Nazarov_2014}, the viscosity coefficient is defined at the cell level. In contrast, we will proceed to define the viscosity coefficient at each finite element node.

Upon defining the maximum wave speed defined as 
$
\lambda_{\max,i}(q_h) := \|\bef'(q_h^n)\|_{L^\infty(S_i)},
$
in Definition~\ref{def:nodal}, we obtain the first order viscosity coefficient for the scalar conservation laws:
\begin{equation}\label{eq:eps:scal}
  \e_{i}(q_h(t)) := C_i m_i \|\bef'(q_h^n)\|_{L^\infty(S_i)} \, \max_{i\not= j\in\calI(S_i)} |\GRAD\varphi_j|.
\end{equation}

We next define the global bilinear viscous form as
\begin{equation}\label{eq:bf}
  b(u_h, v_h) := \sum_{K\subset \calT_h}\int_{K} \e_h^n\, \polJ_K\polJ_K^\top \GRAD u_h \SCAL \GRAD v_h \ud \bx, \quad \forall u_h,v_h \in \calX_h,
\end{equation}
and regularize the Galerkin formulation \eqref{eq:lumped} as
\begin{equation}\label{eq:lumped3}
  m_i \frac{Q^{n+1}_i - Q^n_i}{\tau} + 
  \int_{S_i} \DIV \bef(q_h^n) \varphi_i \ud \bx + 
  b(q^{n}_h, \varphi_i)
  =0,
\end{equation} 
for $i\in\calV$ and $n=0,1,\ldots$.

We denote by $K_{\max,i}$ and $K_{\min,i}$ the cell with the largest and smallest volumes in the patch $S_i$ respectively, \ie
\[
  |K_{\max,i}| = \max_{K\in S_i}{|K|}, \mbox{ and } |K_{\min,i}| = \min_{K\in S_i}{|K|}.
\]
where $|K|$ denotes the area or volume of $K$ in 2D and 3D, respectively. 
Then, for the node $\bN_i$, we define a nodal valued mesh quality as
\begin{equation}\label{eq:quality}
  \varkappa_i := \frac{|K_{\max,i}|}{|K_{\min,i}|}.
\end{equation}

\begin{theorem}
  Under the CFL condition 
  \begin{equation}\label{eq:cfl:burger}
    \tau \le
    \Big(
    \Big(1+ \frac{\gamma}{\alpha}
    \max_{j\in \calV} \varkappa_j
    \Big)
    \|\bef' \|_{L^\infty(\Omega)} \max_{j\in \calV}|\GRAD \varphi_j | \,
    \max_{j\in \calV} \varkappa_j
    \Big)^{-1},
  \end{equation}
  the finite element approximation \eqref{eq:lumped3} preserves the local discrete maximum principle, \ie 
  \[
    q_{\min} \le \min_{j\in \calI(S_i)} Q_j^n \le Q_i^{n+1}
    \le \max_{j\in \calI(S_i)} Q_j^n \le q_{\max}, 
  \]
  for any $n\ge0$ and $i\in \calV$.
\end{theorem}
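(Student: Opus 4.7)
The plan is to rewrite the update rule \eqref{eq:lumped3} in the convex combination form
\[
Q_i^{n+1} = c_{ii} Q_i^n + \sum_{j \in \calI(S_i),\, j\ne i} c_{ij} Q_j^n,
\qquad \sum_{j \in \calI(S_i)} c_{ij} = 1,
\]
and then verify that every $c_{ij}\ge 0$ under the stated CFL condition. The conclusion then follows immediately because $Q_i^{n+1}$ is a convex combination of values in $\calI(S_i)$, which itself is sandwiched by $q_{\min}$ and $q_{\max}$ by the induction hypothesis \eqref{def_umin_umax}.

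\textbf{Step 1 (flux term).} For an interior node, the partition-of-unity identity $\sum_j \GRAD\varphi_j = \mathbf 0$ allows me to subtract the constant vector $\bef(Q_i^n)$ without changing the integral, so that
\[
\int_{S_i} \DIV \bef(q_h^n)\,\varphi_i\,\ud\bx
= -\int_{S_i}\bigl(\bef(q_h^n)-\bef(Q_i^n)\bigr)\SCAL \GRAD \varphi_i\,\ud\bx.
\]
Writing $q_h^n - Q_i^n = \sum_{j\ne i} (Q_j^n - Q_i^n)\varphi_j$ and applying the mean value theorem in the integrand gives
\[
\int_{S_i} \DIV \bef(q_h^n)\,\varphi_i\,\ud\bx
= \sum_{j\ne i}\beta_{ij}\,(Q_i^n-Q_j^n),
\qquad
\beta_{ij}:=\int_{S_{ij}}\bef'(\xi)\SCAL \GRAD\varphi_i\,\varphi_j\,\ud\bx,
\]
where the sign of $\beta_{ij}$ is uncontrolled but $|\beta_{ij}|\le \|\bef'\|_{L^\infty(S_i)}\,|\GRAD\varphi_i|_{\infty}\,\int_{S_{ij}}\varphi_j\,\ud\bx$.

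\textbf{Step 2 (diffusion term).} Because $q_h^n$ and $\e_h^n$ are piecewise linear and $\polJ_K$ is constant on $K$, the acute-angle identity \eqref{eq:intK} gives
\[
b(q_h^n,\varphi_i)=\sum_{j\ne i}(-\alpha E_{ij})(Q_j^n-Q_i^n)
=\sum_{j\ne i}\alpha E_{ij}(Q_i^n-Q_j^n),
\qquad
E_{ij}:=\sum_{K:\,i,j\in\calI(K)}\int_K \e_h^n\,\ud\bx\ge 0,
\]
so the diffusion contributes an unambiguously positive coefficient on each difference $(Q_i^n-Q_j^n)$, and by $\sum_j A_{ij}^{(K)}=0$ one also has the useful identity $\alpha\sum_{j\ne i}E_{ij}=\gamma\sum_{K\subset S_i}\int_K \e_h^n\,\ud\bx$.

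\textbf{Step 3 (off-diagonal positivity).} Substituting into \eqref{eq:lumped3} I obtain
\[
Q_i^{n+1}
=Q_i^n-\frac{\tau}{m_i}\sum_{j\ne i}(\alpha E_{ij}+\beta_{ij})(Q_i^n-Q_j^n),
\]
so that $c_{ij}=\tfrac{\tau}{m_i}(\alpha E_{ij}+\beta_{ij})$ for $j\ne i$ and the diagonal $c_{ii}$ is its complement. Off-diagonal positivity $c_{ij}\ge 0$ amounts to $\alpha E_{ij}\ge |\beta_{ij}|$, which I will prove element by element: on each $K$ with $i,j\in\calI(K)$,
\[
\alpha\int_K\e_h^n\,\ud\bx
\;\ge\;
\|\bef'\|_{L^\infty(K)}\,|\GRAD\varphi_i|_K\,\tfrac{|K|}{d+1}.
\]
Using $\alpha=\tfrac{2}{d+1}$, $\int_K\e_h^n\,\ud\bx\ge |K|\min_{k\in\calI(K)}\e_k$, and the explicit value of $\e_k=C_k m_k \|\bef'\|_{L^\infty(S_k)}\max_{\ell\ne k}|\GRAD\varphi_\ell|$, I estimate $C_k m_k$ from below by $\tfrac{1}{2\varkappa_k}$ (since $m_k\ge\tfrac{\Nel(S_k)}{d+1}\min_{K\in S_k}|K|$ and $C_k=\tfrac{d+1}{2\Nel(S_k)\max_{K\in S_k}|K|}$). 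The choice of $C_i$ is exactly what makes the resulting inequality tight up to the mesh quality $\varkappa$.

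\textbf{Step 4 (CFL and diagonal positivity).} Using $\sum_{j\ne i}\alpha E_{ij}=\gamma\sum_{K\subset S_i}\int_K\e_h^n\,\ud\bx$ and the pointwise bound on $|\beta_{ij}|$, I can bound
\[
\frac{\tau}{m_i}\sum_{j\ne i}(\alpha E_{ij}+|\beta_{ij}|)
\le \tau\,\Bigl(1+\tfrac{\gamma}{\alpha}\,\varkappa_{\max}\Bigr)\,\|\bef'\|_{L^\infty(\Omega)}\,\max_j|\GRAD\varphi_j|\,\varkappa_{\max},
\]
after reinserting the definitions of $\e_h^n$ and $m_i$ and absorbing the $\max/\min$ ratios of $|K|$ on the patch into $\varkappa_{\max}:=\max_{j\in\calV}\varkappa_j$; this is exactly the reciprocal of the right-hand side of \eqref{eq:cfl:burger}, so the CFL condition yields $c_{ii}\ge 0$. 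Combined with Step 3 and the automatic relation $\sum_j c_{ij}=1$, this closes the induction.

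\textbf{Main obstacle.} The bookkeeping of constants in Step 3 is the delicate part: $\e_h^n$ lives in $\calX_h$ rather than being constant on each $K$, and the patch-wise definition of $C_i$ must be carefully matched against the element-wise inequality $\alpha E_{ij}\ge |\beta_{ij}|$. Tracking how $\max|K|$, $\min|K|$, and $\Nel(S_i)$ combine to produce precisely the factor $\varkappa_{\max}$ in \eqref{eq:cfl:burger} — and no worse — is where all the work goes; the rest is a linear combination argument.
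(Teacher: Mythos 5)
Your overall strategy is the paper's own: rewrite $Q_i^{n+1}$ as a convex combination of the nodal values on $\calI(S_i)$, use the acute-angle identity \eqref{eq:intK} together with the exactness of the vertex quadrature for the piecewise-linear $\e_h^n$ to reduce the viscous off-diagonal entries to $-\alpha\int_K\e_h^n\,\ud\bx$, verify off-diagonal nonnegativity element by element, and invoke the CFL condition only for the diagonal. Your secant/difference form in Steps~1--2 merely repackages the paper's direct nodal expansion (it makes the row-sum-one property automatic instead of a separate partition-of-unity computation), at the cost of an integration by parts that needs a separate argument at boundary nodes. The real issues are two places where your constants do not close as claimed.

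First, in Step~3 your lower bound $C_k m_k\ge\tfrac{1}{2\varkappa_k}$ reads $\max_{K\in S_k}|K|^{-1}$ in \eqref{eq:Ci} as $\big(\max_{K\in S_k}|K|\big)^{-1}$. With that reading the element-wise inequality $\alpha E_{ij}\ge|\beta_{ij}|$ picks up a spurious factor $\varkappa_k^{-1}\le 1$ and genuinely fails on non-uniform meshes; no amount of bookkeeping repairs it. The paper's own proof (see \eqref{eq:th:eps_est}) uses $C_k=\frac{1}{\alpha\,\Nel(S_k)}\big(\min_{K\subset S_k}|K|\big)^{-1}$, which gives $C_k m_k\ge\tfrac12$ and makes the off-diagonal check close exactly: the mesh-quality ratio $\varkappa$ is meant to appear only in the CFL condition, never in the off-diagonal positivity. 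Second, in Step~4 the difference form places $\sum_{j\ne i}|\beta_{ij}|$ on the diagonal, and since
\[
\sum_{j\ne i}\int_{S_{ij}}\varphi_j\,\ud\bx=\int_{S_i}(1-\varphi_i)\,\ud\bx=d\,m_i,
\]
your advective contribution to $1-c_{ii}$ is bounded by $d\,\tau\,\|\bef'\|_{L^\infty(\Omega)}\max_j|\GRAD\varphi_j|$, not by the factor one that the paper obtains from its diagonal term $\int_{S_i}\bef'(q_h^n)\SCAL\GRAD\varphi_i\,\varphi_i\,\ud\bx\le \|\bef'\|_{L^\infty}\max_j|\GRAD\varphi_j|\,m_i$. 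Hence your argument establishes $c_{ii}\ge0$ only under $\tau\le\big((d+\tfrac{\gamma}{\alpha}\varkappa)\cdots\big)^{-1}$, strictly more restrictive than \eqref{eq:cfl:burger} (for $d=2$, $\varkappa=1$: denominator $4$ versus the stated $3$); the assertion that your bound is ``exactly the reciprocal'' of \eqref{eq:cfl:burger} is not substantiated. Either sharpen the estimate of $\sum_{j\ne i}\beta_{ij}$ or group the terms as the paper does, keeping $\GRAD\varphi_i\,\varphi_i$ on the diagonal.
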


\begin{proof} 
  We perform the proof for $d>1$. For the case of $d=1$ the proof is trivial. 
  After moving all known quantities to the right hand side of \eqref{eq:lumped3}, using the identity $\DIV \bef(q^n_h) = \bef'(q^n_h) \SCAL \GRAD q^n_h$ and multiplying by $\tau/m_i$, we obtain:
  \[
    Q^{n+1}_i = Q^n_i - \frac{\tau}{m_i}
    \Big(
    \int_{S_i} \bef'(q^n_h) \SCAL \GRAD q^n_h \, \varphi_i \ud \bx + 
    \sum_{K\subset S_i} \int_{K} \e_h(q^n_h)\, (\polJ_K^\top \GRAD q_h^n) \SCAL (\polJ_K^\top \GRAD \varphi_i) \ud \bx
    \Big),
  \]
  and 
  \begin{align*}
    Q^{n+1}_i 
    = &\ 
        Q^n_i\Big( 
        1 - \frac{\tau}{m_i} 	
        \sum_{K\subset S_i}\int_{K} 
        \Big(\bef'(q_h^n) \SCAL \GRAD \varphi_i \varphi_i + 
        \e_h(q^n_h)\, | \GRAD (\polJ_K^\top \GRAD \varphi_i)|^2 
        \Big)\ud \bx
        \Big) \\
      &\quad
        +
        \sum_{K\subset S_i} 
        \sum_{j\in \calI(K),\, j\not=i} Q^n_j \,\,
        \frac{\tau}{m_i}
        \int_K
        \Big(
        -
        \bef'(q_h^n) \SCAL \GRAD \varphi_j \varphi_i 
        - 
        \e_h(q^n_h)\, \GRAD (\polJ_K^\top \GRAD \varphi_j) \SCAL (\polJ_K^\top \GRAD \varphi_i)
        \Big) \ud \bx \\
    := &\
         a Q^n_i + \
         \sum_{K\subset S_i} 
         \sum_{j\in \calI(K),\, j\not=i} b_{j} Q^n_j \,\,
  \end{align*}
  \ie we obtained that the solution at time $t^{n+1}$ is constructed by a linear combination of the solution at time $t^n$. Here we observe that if 1) $a$ and $b_j$, for all $j\not=i$ and $i,j\in\calV$, are non-negative constants, and 2) $a+\sum_{K\subset S_i} 
  \sum_{j\in \calI(K),\, j\not=i} b_{j}  =1$, then the convex combination property gives that $Q^{n+1}_i$ preserves the discrete maximum principle. 

  We start by proving the following identity:
  \begin{equation}\label{eq:th:integration}
    \int_K \e_h(q^n_h)\, (\polJ_K^\top \GRAD \varphi_j) \SCAL (\polJ_K^\top \GRAD \varphi_i) \ud \bx = \frac{1}{d+1} \sum_{l\in \calI(K)} \e^n_h(\bN_l)\,
    \int_K (\polJ_K^\top \GRAD \varphi_j) \SCAL (\polJ_K^\top \GRAD \varphi_i)
    \ud \bx.
  \end{equation}
  In fact, since $ \GRAD \widetilde \varphi_j \SCAL \GRAD \widetilde \varphi_i$, where $\widetilde \varphi_k:= \Phi_K \circ \varphi_k$, for all $k\in \calV$,  is constant on the cell $\widetilde K$ and $\e^n_h$ is a linear function, $|\det (\polJ_K)| = |K|/|\widetilde K|$, using the Trapezoidal integration rule, we obtain:
  \[
    \begin{aligned}
      \int_K \e_h(q^n_h)\, (\polJ_K^\top \GRAD \varphi_j) \SCAL (\polJ_K^\top \GRAD \varphi_i) \ud \bx 
      = & \ 
          \int_{\widetilde K} \e_h(q^n_h(\widetilde\bx))\,  \GRAD \widetilde \varphi_j \SCAL \GRAD \widetilde \varphi_i \, |\det(\polJ_K)| \ud \widetilde\bx \\
      = & \ 
          \GRAD \widetilde \varphi_j \SCAL \GRAD \widetilde \varphi_i \int_{\widetilde K} \e_h(q^n_h(\widetilde\bx))\, |\det(\polJ_K)| \ud \widetilde\bx \\
      = & \ 
          \GRAD \widetilde \varphi_j \SCAL \GRAD \widetilde \varphi_i \int_{K} \e_h(q^n_h(\bx)) \ud \bx \\
      = & \
          \GRAD \widetilde \varphi_j \SCAL \GRAD \widetilde \varphi_i 
          \sum_{l \in \calI(K)} \e^n_h(\bN_l)\, \frac{|K|}{d+1} \\
      = & \
          \GRAD \widetilde \varphi_j \SCAL \GRAD \widetilde \varphi_i \, 
          \frac{1}{d+1}\sum_{l\in \calI(K)} \e^n_l \, \frac{|K|}{|\widetilde K|} |\widetilde K| \\
      = & \
          \frac{1}{d+1}\sum_{l\in \calI(K)} \e^n_l \
          \int_{\widetilde K}
          \GRAD \widetilde \varphi_j \SCAL \GRAD \widetilde \varphi_i \, 
          |\det (\polJ_K)| \ud \widetilde \bx 
      \\
      = & \
          \frac{1}{d+1} \sum_{l\in \calI(K)} \e^n_l\,
          \int_K (\polJ_K^\top \GRAD \varphi_j) \SCAL (\polJ_K^\top \GRAD \varphi_i)
          \ud \bx.
    \end{aligned}
  \]
  Also, note that $\e^n_l=\e^n_h(\bN_l)$, a nodal artificial viscosity defined in \eqref{eq:eps}. 

  Next, using the identity $\int_K \varphi_j \ud \bx = \frac{1}{d+1}|K|$, $j\in\calV$, we have the following estimate for the viscosity coefficient:
  \begin{equation}\label{eq:th:eps_est}
    \begin{aligned}
      \e^n_i 
      = &\
          \frac{1}{\alpha \Nel(S_i)} \frac{1}{\min_{K\subset S_i} |K|}\, m_i \, \|\bef'\|_{L^{\infty}(S_i)} \max_{i\not= j\in\calI(S_i)} |\GRAD\varphi_j| \\
      \le &\
            \frac{1}{\alpha \Nel(S_i)} \frac{1}{\min_{K\subset S_i}|K|} 
            \frac{1}{d+1} \Nel(S_i)\max_{K\subset S_i}|K| \,
            \|\bef'\|_{L^{\infty}(\Omega)} \max_{j\in\calV} |\GRAD\varphi_j| \\
      \le &\
            \frac{\varkappa_i}{\alpha} \frac{1}{d+1}
            \|\bef'\|_{L^{\infty}(\Omega)} \max_{j\in\calV} |\GRAD\varphi_j|
    \end{aligned}.
  \end{equation}

  Using \eqref{eq:intK}, \eqref{eq:th:eps_est}, \eqref{eq:th:integration} and the CFL confition \eqref{eq:cfl:burger} we obtain: 
  \begin{equation}
    \begin{aligned}
      a = &\,
            1 - \frac{\tau}{m_i}
            \Bigg[ 	
            \sum_{K\subset S_i}\int_{K} 
            \bef'(q_h^n) \SCAL \GRAD \varphi_i \varphi_i \ud \bx 
            + 
            \frac{1}{d+1}\sum_{K\subset S_i}
            \sum_{l\in \calI(K)} \e^n_l \int_K |\polJ_K^\top \GRAD \varphi_i |^2 \ud \bx
            \Bigg]
      \\
      \ge &\,
            1 - \|\bef'\|_{L^\infty(\Omega)} \max_{j\in\calV} |\GRAD\varphi_j| \,
            \frac{\tau}{m_i}
            \Bigg[
            m_i + \frac{1}{d+1} \sum_{K\subset S_i}\sum_{l\in \calI(K)}\frac{\varkappa_l}{\alpha} \frac{1}{d+1} \gamma |K|
            \Bigg] \\
      \ge &\,
            1 - \|\bef'\|_{L^\infty(\Omega)} \max_{j\in\calV} |\GRAD\varphi_j| \,
            \frac{\tau}{m_i}
            \Bigg[
            m_i + \frac{\gamma}{\alpha} \max_{j\in\calV}\varkappa_j
            \sum_{K\subset S_i} \frac{1}{d+1} |K|
            \Bigg] \\
      = &\,
          1 - \|\bef'\|_{L^\infty(\Omega)} \max_{j\in\calV} |\GRAD\varphi_j| \,
          \frac{\tau}{m_i} \,
          m_i
          \Bigg[
	  1 + \frac{\gamma}{\alpha} \max_{j\in\calV}\varkappa_j
	  \Bigg] \\
      \ge &\,
            0.
    \end{aligned}
  \end{equation}

  Now, we prove that $b_j \ge 0$ for any $j\in\calV$. For that, it is sufficient to prove that the integral is nonnegative. Using \eqref{eq:th:integration}, the definition of the artificial viscosity coefficient \eqref{eq:eps:scal} and the estimate
  $
  m_l \ge \Nel(S_l) \frac{1}{d+1}\min_{K\subset S_l} |K|,
  $
  for any $l\in \calV$, we obtain:
  \[
  \begin{aligned}
    -\int_{K} &\
                \Big(
                \bef'(q_h^n) \SCAL \GRAD \varphi_j \varphi_i + 
                \e_h(q^n_h)\, \GRAD (\polJ_K^\top \GRAD \varphi_j) \SCAL (\polJ_K^\top \GRAD \varphi_i)
                \Big) \ud \bx \\
    = &\,
        - \int_{K} 
        \Big(
        \bef'(q_h^n) \SCAL \GRAD \varphi_j \varphi_i \ud \bx 
        -
        \frac{1}{d+1} \sum_{l\in\calI(K)} 
        \e_l^n
        \int_K \GRAD (\polJ_K^\top \GRAD \varphi_j) \SCAL (\polJ_K^\top \GRAD \varphi_i) \ud \bx \\
    = &\,
        - \int_{K} 
        \Big(
        \bef'(q_h^n) \SCAL \GRAD \varphi_j \varphi_i \ud \bx \\
              &\, \hspace{0.5in} -
                \frac{1}{d+1} \sum_{l\in\calI(K)} 
                C_l m_l \|\bef'\|_{L^{\infty}(S_l)}\max_{j\in \calI(S_l)}|\GRAD \varphi_j|
                \int_K \GRAD (\polJ_K^\top \GRAD \varphi_j) \SCAL (\polJ_K^\top \GRAD \varphi_i) \ud \bx \\
    = &\,
        -\|\bef'\|_{L^{\infty}(K)} \max_{j\in\calI(K)}|\GRAD \varphi_j| 
        \int_K \varphi_i \ud \bx \\
              &\, \hspace{0.5in} -
                \frac{1}{d+1} \sum_{l\in\calI(K)} 
                C_l\,m_l\,
                \|\bef'\|_{L^{\infty}(S_l)}\max_{j\in \calI(S_l)}|\GRAD \varphi_j|
                (-\alpha |K|) \\ 
    \ge \,\, &
               \|\bef'\|_{L^{\infty}(K)} \max_{j\in\calI(K)}|\GRAD \varphi_j| \,\, 
               \Bigg(
               -\int_K \varphi_i \ud \bx \\
              &\, \hspace{0.5in} 
                +
                \frac{1}{d+1} \alpha |K| \sum_{l\in\calI(K)} 
                \frac{1}{\alpha \Nel(S_l)} \frac{1}{\min_{K\in S_l}|K|} \,
                \, \frac{1}{d+1} \Nel(S_l) \min_{K\in S_l}|K|
                \Bigg)
    \\ 
    = \,\,&
            \|\bef'\|_{L^{\infty}(K)} \max_{j\in\calI(K)}|\GRAD \varphi_j| \,\, 
            \Bigg(
            -\int_K \varphi_i \ud \bx 
            +
            \frac{1}{d+1} |K|       \Bigg)
    \\ 
    = \,\, & 
             0.
  \end{aligned}
  \]

  Finally, to show the second property, we use the partition of unity property:
\[
  \begin{aligned}
    a + \sum_{K\subset S_i} &\ 
    \sum_{j\in \calI(K),\, j\not=i} b_{j} \\
    =&\
       1 - 
       \frac{\tau}{m_i}
       \sum_{K\subset S_i} 
       \sum_{j\in \calI(K)}
       \int_{K} 
       \Big(
       \bef'(q^n_h) \SCAL \GRAD \varphi_j \, \varphi_i \ud \bx + 
       \e_h(q^n_h)\, (\polJ_K^\top \GRAD \varphi_j) \SCAL (\polJ_K^\top \GRAD \varphi_i) 
       \Big) \ud \bx
    \\
    =&\
       1 - 
       \frac{\tau}{m_i}
       \sum_{j\in \calI(S_i)}
       \sum_{K\subset S_{ij}} 
       \int_{K} 
       \Big(
       \bef'(q^n_h) \SCAL \GRAD \varphi_j \, \varphi_i \ud \bx + 
       \e_h(q^n_h)\, (\polJ_K^\top \GRAD \varphi_j) \SCAL (\polJ_K^\top \GRAD \varphi_i)     
       \Big)
       \ud \bx
    \\
    = &\
	1 - 
	\frac{\tau}{m_i} 	
	\sum_{K\subset S_{ij}}
	\int_{K} 
	\Big(
	\bef'(q_h^n) \SCAL\GRAD \Big(\!\! \sum_{j\in \calI(S_i)} \varphi_j \Big) \varphi_i + 
	\e_h(q^n_h)\, \Big(\polJ_K^\top \GRAD \Big( \!\! \sum_{j\in \calI(S_i)}\varphi_j \Big) \Big) \SCAL (\polJ_K^\top \GRAD \varphi_j)
	\Big) \ud \bx \\
    = &\
	1.
  \end{aligned}
\]

  The proof is completed.
\end{proof}

\begin{remark}
  \label{remark:cfl}
  Note that $\frac{\gamma}{\alpha} = d$. Upon defining the global variables
  \begin{equation}
    \begin{aligned}
      h := \min_{j\in\calV} \big(|\GRAD \phi_j | \big)^{-1}, 
      \quad
      \varkappa := \max_{j\in\calV} \varkappa_j,
      \quad
      \beta: = \|\bef'(q_h^n) \|_{L^\infty(\Omega)}, \quad
      \textrm{CFL} :=  \frac{1}{(1 + d\varkappa)\varkappa},
    \end{aligned}
  \end{equation}
  the CFL condition \eqref{eq:cfl:scal} can be rewritten as 
  \begin{equation}\label{eq:cfl:scal}
    \tau \le \textrm{CFL} \frac{h}{\beta},
  \end{equation}
  which is a usual CFL condition to approximate for first order hyperbolic problems. Note that for quasi-uniform meshes $\varkappa\approx 1$ and the CFL condition is $\frac12$ in 1D, $\frac13$ in 2D, and $\frac14$ in 3D. 
\end{remark}


\begin{remark}
  Taking a maximum over the patch in the definition of the artificial viscosity coefficient \eqref{eq:eps} makes the viscosity coefficient rather large. For instance, to prove the positivity of coefficient $b_j$ in the proof of the theorem, it is enough to have 
  \begin{equation}\label{eq:eps_ij}
    \e_{ij} = C_i\ 
    \Big|
    \int_{S_{ij}} \bef'(q_h^n) \SCAL \GRAD \varphi_j \varphi_i \ud \bx
    \Big|, 
  \end{equation}
  which is an edge based viscosity and is sharper than taking a maximum of the flux term over the patch. However, \eqref{eq:eps_ij} is a matrix and must be assembled at every time-step, whereas \eqref{eq:eps} is a vector, which is usually cheaper to assemble. Since in this work, we are interested in the construction of viscosity operator for higher-order polynomial spaces, we are interested in \eqref{eq:eps} since it is faster to compute.
\end{remark}

\bibliographystyle{plainnat}
\bibliography{ref}
\end{document}